\newtheorem{theorem}{Theorem}
\newtheorem{lemma}[theorem]{Lemma}
\newtheorem{corollary}[theorem]{Corollary}
\newtheorem{proposition}[theorem]{Proposition}
\newtheorem{conjecture}[theorem]{Conjecture}
\newtheorem{claim}[theorem]{Claim}
\theoremstyle{definition}
\newtheorem{definition}[theorem]{Definition}
\newtheorem{example}[theorem]{Example}
\newtheorem{remark}[theorem]{Remark}
\newtheorem{open problem}[theorem]{Open Problem}
\numberwithin{equation}{section}
\newcommand*\circled[1]{\tikz[baseline=(char.base)]{
            \node[shape=circle,draw,inner sep=0.7pt] (char) {#1};}}
\newcommand\sdot{\mathrel{\ooalign{$\subseteq$\cr
  \hidewidth\raise0.275ex\hbox{$\cdot\mkern2.9mu$}\cr}}}
\newcommand{\f}{*\cdots*}
\definecolor{codegreen}{rgb}{0,0.6,0}
\definecolor{codegray}{rgb}{0.5,0.5,0.5}
\definecolor{codepurple}{rgb}{0.58,0,0.82}
\definecolor{backcolour}{rgb}{0.95,0.95,0.92}
\lstdefinestyle{mystyle}{
    backgroundcolor=\color{backcolour},   
    commentstyle=\color{codegreen}\let\lst@um\@empty\hspace{-0.1em},
    keywordstyle=\color{blue},
    numberstyle=\tiny\color{codegray},
    stringstyle=\color{codepurple},
    basicstyle=\ttfamily\tiny, 
    xleftmargin=-4em, 
    xrightmargin=-4em, 
    morecomment=[l]{\#}
}
\title{Combinatorics of Double Grothendieck Polynomials}
\author{Graham Hawkes}
\thanks{This work was completed while supported by the Max Planck Institute for Mathematics in Bonn.}
\address{Max Planck Institute for Mathematics, Vivatsgasse 7, 53111 Bonn, Germany}
\begin{document}

\begin{abstract}
We give a proof of the generalized Cauchy identity for double Grothendieck polynomials, a combinatorial interpretation of the stable double Grothendieck polynomials in terms of triples of tableaux, and an interpolation between the stable double Grothendieck polynomial and the weak stable double Grothendieck polynomial.  This so-called half weak stable double Grothendieck polynomial evaluated at $x=y$ generalizes the type $B$ Stanley symmetric function of Billey and Haiman and is $Q$-Schur positive by degree.   We conclude with two open problems as well as a conjecture regarding the $K$-theoretic analogues of factorial Schur $Q$-functions defined by Ikeda and Naruse. The conjecture is supported by code given in the appendices.
\end{abstract}
\maketitle

\section{Introduction}
\subsection{Background}

Grothendieck polynomials \cite{LS82},\cite {LS83},  are a non-homogeneous generalization of Schubert polynomials, the latter a family of polynomials indexed by permutations which are studied among other things in relation to the combinatorics of Coxeter groups.  In particular, the lowest degree term of a Grothendieck polynomial is a Schubert polynomial.  Combinatorially, Grothendieck polynomials replace the notion of the symmetric group with that of the $0$-Hecke monoid \cite{BKSTY08}.  Double Schubert polynomials are  considered in \cite{LS82a} to generalize Schubert polynomials by extending them to two sets of variables in such a way that setting the second variable set to zero returns a regular Schubert polynomial.   In turn,  double Grothendieck polynomials generalize Grothendieck polynomials by extending them to two sets of variables in such a way that setting the second variable set to zero returns a regular Grothendieck polynomial. 
 
Double Grothendieck polynomials themselves are generally not symmetric in either set of variables.  However, there exists a way to derive a (doubly) symmetric function from a double Grothendieck polynomial through a process of taking a stable limit.  These limits are known as the  stable double Grothendieck polynomials.

Stable double Grothendieck polynomials expand in terms of another class of polynomials called the balanced double Grothendieck polynomials.   These polynomials are doubly Schur positive (Lenart (\cite{Lenart00}) proves symmetric (single) Grothendieck polynomials are Schur positive it follows from Fomin-Kirillov (\cite{FK94})  that balanced double Grothendieck polynomials can be written as a sum of  products of the former) and have nice combinatorial interpretations in terms of set valued tableaux \cite{Buch02}, \cite{McNamara06}.   We also mention there exist weak versions of the  stable (double) and symmetric (double)  Grothendieck polynomials.  In particular the weak symmetric  Grothendieck polynomials have a combinatorial interpretation in terms of multiset valued tableaux \cite{LaPy07}.

\subsection{Contributions and Organization} We explain what this paper contributes and how it is structured simultaneously. In section 2 we recall various constructions and results pertaining to Grothendieck polynomials appearing elsewhere in the literature that will be needed for the rest of the paper.  

Section 3 deals with the most general version of double Grothendieck polynomials that we consider in this paper.  The main result of section 3 is Theorem \ref{Cauch} which proves three formulae for the double Grothendieck polynomial.  The first two are combinatorial expressions in terms of certain factorizations of Hecke words.  Such interpretations are more useful for our purposes than pipe dream formulations as they are amenable to the Hecke insertion of \cite{BKSTY08}.  Moreover, the relation between our two models helps to explicate the relationship between double Grothendieck and single Grothendieck polynomials. This relation is made explicit in the proof of the third formula, which is a generalization of the Cauchy identity for Schubert polynomials and which can be credited to \cite{FK94}.  Here we give a proof of this formula. 

Section 4 deals with stable double Grothendieck polynomials, balanced double Grothendieck polynomials and their relation.  The main result of section 4 is Theorem \ref{tabt} which gives a formula for the stable double Grothendieck polynomial in terms of triples of tableaux (see remark \ref{tabl}) as well as a similar formula for the weak double Grothendieck polynomial.  If it were not for the fact that  Hecke insertion lacks a certain property (see remark \ref{pita}) such an expression would be an easy corollary of the work of \cite{BKSTY08}.  Indeed in almost all imaginable analogous cases (i.e., for choices of the parameters single vs double, type $A$ vs other types, standard vs $K$-theoretic)  that have been defined a similar expression follows directly from the relevant insertion algorithm.  However, in the absence of the property of Remark \ref{pita} of Hecke insertion (we leave it as an open problem to amend Hecke insertion so that it \emph{does} have this property)  some additional work must be done.  This work comprises the majority of section 4.  

In section 5 we investigate the relation between stable double Grothendieck polynomials (of type $A$) and a potential definition of  stable (single) Grothendieck polynomials of type $B/C$.  The latter definition is given by first taking an interpolation of the stable double Grothendieck polynomial and the weak stable double Grothendieck polynomial and then evaluating the result when the two sets of variables are set equal to each other (i.e., at $x=y$).  As we will see, the combinatorial definition of this  ``half weak" double Grothendieck polynomial is more amenable to Hecke insertion than either the weak or non-weak versions.  Moreover, unlike the others, it is $Q$-Schur positive at $x=y$.
This is the main result of section 5, which is stated as Theorem \ref{QP} which also gives a combinatorial interpretation of the coefficients in the $Q$-Schur expansion.  The half weak double Grothendieck polynomial (evaluated at $x=y$) generalizes nicely the type $B$ Stanley symmetric function of \cite{Billey.Haiman.1995}: In particular, the lowest degree part of the former function recovers the latter.  In addition, it is also $Q$-Schur positive by degree.

Section 6 includes two open problems and a conjecture that arise from our study of double Grothendieck polynomials.

\section{Single Grothendieck Polynomials}
\subsection{Operator definition}
\begin{definition}
Let $f \in \mathbb{Z}[x_1, \ldots x_{n+1}]$.  For each $1 \leq i \leq n$ define the divided difference operator $\partial_i$ by $\partial_i(f)=\frac{f-s_i f}{x_i-x_{i+1}}$ where $s_i$ acts by interchanging the variables $x_i$ and $x_{i+1}$.  Define $\pi_i$ by the formula $\pi_i(f)=\partial_i(f)+\partial_i(x_{i+1}f)$.
\end{definition}
\begin{lemma}\label{relations}
The divided difference operators satisfy the following relations: \cite{LS82}, \cite{LS83}
\begin{enumerate}
\item If $|i-j|>1$ then $\partial_i\partial_j=\partial_j\partial_i$ and $\pi_i\pi_j=\pi_j\pi_i$. 
\item If $i=j+1$ then $\partial_i\partial_j\partial_i=\partial_j\partial_i\partial_j$ and $\pi_i\pi_j\pi_i=\pi_j\pi_i\pi_j$.
\item $\partial_i^2=0$.
\item $\pi_i^2=-\pi_i$.
\end{enumerate}
\end{lemma}
Given a permutation $w \in S_n$ one can write down (non-uniquely in general) $w$ as  sequence of adjacent transpositions, i.e.: $w=s_{i_1}\cdots s_{i_{\ell}}$ where $\ell$ is the inversion number of the permutation.  We can then define $\partial_{w}$ by $\partial_{i_1} \cdots \partial_{i_{\ell}}$ and  $\pi_{w}$ to be $\pi_{i_1} \cdots \pi_{i_{\ell}}$.  By parts 1 and 2 of Lemma \ref{relations} this procedure is well defined, i.e., the definition of $\partial_{w}$ and $\pi_{w}$  does not depend on the chosen reduced word.  
\begin{definition}
Fix $w \in S_n$ and let $w_0$ refer to the element of $S_n$ with maximal inversion number.  
Define the Grothendieck polynomial for $w$ by:
\begin{eqnarray*}
 \mathfrak{G}_{w}(x)=\pi_{(w^{-1}w_0)}(x_1^nx_2^{n-1} \cdots x_n^1x_{n+1}^0).
\end{eqnarray*}
\end{definition}
See \cite{LS82} and \cite{LS83} for original formulations.
\subsection{Hecke Insertion}
 Consider a new  operator $\bar{s_i}$ (which we will frequently just write as $i$)  acting on permutations of the set $\{\mathbf{1},\mathbf{2},\mathbf{3},\ldots,\mathbf{n},\mathbf{n+1}\}$ where the operation $\bar{s_i}$ or $i$ is given by interchanging $\mathbf{i}$ and $\mathbf{i+1}$ if $\mathbf{i}$ lies to the left of $\mathbf{i+1}$ and by doing nothing otherwise (In particular $i^2=i$ whereas $s_i^2=e$).  In this setting, a  \emph{Hecke word} for $w$ is a sequence $i_1,\ldots,i_k$ such that  applying this sequence (right to left) to the starting arrangement $\{\mathbf{1},\mathbf{2},\mathbf{3},\ldots,\mathbf{n},\mathbf{n+1}\}$ gives the permutation $w$.  
 
We will give an overview of a simple insertion algorithm \cite{BKSTY08} for Hecke words which will be necessary at various stages.  First we need to define two  types of tableaux:
\begin{definition}
A standard set-valued tableau of shape $\lambda$ is a filling of a Young diagram of shape $\lambda$ with exactly one of each of  the letters $\{1,\ldots,N\}$ for some integer $N \geq |\lambda|$ such that each box contains at least one entry and such that all entries in a given box are smaller than all the entries in the box below and smaller than all the entries in the box to the right.
\end{definition}
\begin{definition}
Given a permutation $w$, a Hecke tableau for $w$ of shape $\lambda$ or element of $HT_{w}(\lambda)$ is a tableau where each box of $\lambda$ is filled with exactly one of the symbols $\{1,\ldots,n\}$ in such a way that reading the boxes by rows, moving left to right within the rows and  moving bottom to top amongst the rows gives a Hecke word for $w$, and, such that the rows and columns are strictly increasing in the order $1< \cdots <n$. 

\end{definition}

To define Hecke insertion\footnote{In this paper we consider Hecke insertion as a row insertion algortihm (e.g., \cite{PP16})  rather than a column insertion algorithm.}, we first show how to insert some $a  \in \{1,\ldots, n\}$ into some row of a Hecke tableau, say $V=(v_1,v_2,\ldots, v_j)$ read from left to right.  Suppose that the row above $V$ (if it exists) is $U$ and it has entries $U=(u_1,u_2,\ldots,u_k)$ and the row below it is $Z$ with entries $Z=(z_1,z_2,\ldots,z_i)$.  Here all $u,v,z  \in \{1,\ldots,n\}$.  We assume that $a  \in [u_h,v_h)$ for some $h$ (where possibly $h>j$ and $v_h$ is taken, by convention to be $\infty$) and that $a>u_1$ if $U$ exists.  There are no restrictions on $a$ if $U$ does not exist that is, if $V$ is the first row of the tableau.

\Yboxdim{20pt}\young({{u_1}}{{u_2}}\cdots{{u_i}}\cdots{{u_j}}{{  u_ {j+1}    }}\cdots{{u_k}},{{v_1}}{{v_2}}\cdots{{v_i}}\cdots{{v_j}},{{z_1}}{{z_2}}\cdots{{z_i}})  
\begin{tabular}{ccc}
$\longleftarrow a$ \\
 \\
 \\
\\
\end{tabular}

We insert $a$ into $V$ as follows:
\begin{enumerate}
\item If $a \geq v_j$ and:
\begin{itemize}
\item $a>v_j$ and $a>u_{j+1}$. Then $a$ is appended to the right of $v_j$.  
\item $a=v_j$ or $a=u_{j+1}$. Then $a$ simply disappears.
\end{itemize}
\item If $a<v_j$. Let $h$ be minimal such that $a \leq v_h$.
\begin{itemize}
\item $a=v_h$.  Then $V$ stays the same and $v_{h+1}$ is inserted into $Z$.
\item $a<v_h$ and $a>u_h$  then $a$ replaces $v_h$ and $v_h$ is inserted into row $Z$.
\item $a<v_h$ and $a=u_h$  then $V$ is unchanged and $v_h$ is inserted into row $Z$.
\end{itemize}
\end{enumerate}
Note that the result is strictly decreasing down columns by construction and that our assumption on $a$ guarantees one of the situations above must occur.  Moreover, the assumption is maintained moving on to the next insertion.  That is, the element (if it exists) to be inserted into row $Z$ exists in some interval $[v_{\ell},z_{\ell})$ and is greater than $v_1$.

We now describe complete Hecke insertion. Given a Hecke word say $\alpha_1  \cdots  \alpha_m$ create a sequence of pairs of tableaux of the same shapes $(P_0,Q_0),(P_1,Q_1),\ldots, (P_m,Q_m)$ by setting $P_0=\emptyset=Q_0$ and creating $(P_{i+1},Q_{i+1})$ from $(P_i,Q_i)$ as follows.
Insert $\alpha_{i+1}$ into $P_i$ by inserting it into the first row of $P_i$.  As long as there is an output,  insert the output into the next row.  The algorithm stops when either an element is appended to the end of a row or disappears. The resulting Hecke tableau is $P_{i+1}$.  If the algorithm ends by appending an element, add a box to the corresponding position of  $Q_i$ and fill it with the number $i+1$ to form $Q_{i+1}$. If the algorithm stops by an element disappearing, take the row where the last insertion occurred and caused this element to disappear and consider its rightmost box $b$.  Now find the lowest box in the same column as $b$, call it $b'$.  Add an $i+1$ to the position corresponding to $b'$  in $Q_i$ to form $Q_{i+1}$.  (Of course, it is possible $b'=b$.)

$\young(\hfil\hfil\hfil\hfil\hfil\hfil,\hfil\hfil\hfil\hfil\hfil\hfil,\hfil\hfil\hfil \hfil\hfil,\hfil\hfil\hfil{{b}},\hfil\hfil\hfil\hfil,\hfil\hfil\hfil{{b'}},\hfil\hfil\hfil)$ $\longleftarrow$ disappearing element\\
\begin{example}
Suppose that we have
\begin{eqnarray*} 
P_{10}=\young(245,468,57,7,8) \,\,\,\,\,\,\,\,\,\,\,\,\,\,\, Q_{10}=\young(124,367,5{{10}},8,9) \,\,\,\,\,\,\,\,\,\,\,\,\,\,\, \alpha_{11}=3
\end{eqnarray*}

Then $P_{11}$ and $Q_{11}$ are computed as follows.  

\begin{enumerate}
\item First the $3$ is inserted into row one of $P_{11}$.  $3$ replaces $4$ in this row and $4$ is sent to be inserted into row two. 
\item  $4$ is inserted into row two. Since a $4$ already appears in row two this row does not change and the number to the right of the $4$ in row two, which is $6$, will be inserted into row three.  
\item When $6$ is inserted into row three it would replace the $7$ with itself except that the number above this $7$ in row two is not less than $6$ (it is $6$).  Thus row three remains unchanged and the $7$ is inserted into row four.  
\item Row four ends in $7$ itself so the inserted $7$ is disappeared.  
\end{enumerate}

To form the recording tableau an $11$ is added not in the box, $b$,  at the end of row four but to the box $b'$ at the bottom of the column containing $b$.  All in all the only changes are in the first row of the insertion tableau and the fifth row of the recording tableau and the result is:
\begin{eqnarray*} 
P_{11}=\young(2{{\textcolor{red}{3}}}5,468,57,7,8) \,\,\,\,\,\,\,\,\,\,\,\,\,\,\, Q_{11}=\young(124,367,5{{10}},8,{{9,\textcolor{red}{11}}}) 
\end{eqnarray*}
\end{example}

\begin{proposition}[\cite{BKSTY08}] \label{Hebij}
Fix $ w \in S_{n+1}$.  Hecke insertion is a bijection from Hecke words for $w$ to pairs $(P,Q)$ of tableaux of the same shape where $P$ is a Hecke tableau for $w$ and $Q$ is a standard set-valued tableau.
\end{proposition}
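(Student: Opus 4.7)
The plan is to proceed by induction on the length $m$ of the Hecke word, and to make bijectivity manifest by exhibiting an explicit inverse to the insertion procedure. Let $\omega_i$ denote the permutation associated with the prefix $w_1 \cdots w_i$, so $\omega_m = \omega$. At each step I would maintain three invariants: (i) $P_i$ is a Hecke tableau for $\omega_i$, (ii) $Q_i$ is a standard set-valued tableau with entries $\{1, \ldots, i\}$, and (iii) the shapes of $P_i$ and $Q_i$ coincide.

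For the inductive insertion step, the verifications split according to the four branches of the row-insertion algorithm. Preservation of shape and of strict increase along rows and columns of $P_{i+1}$ follow from the interval hypothesis $a \in [x_h, y_h)$ and from the fact that the output passed to the next row inherits an analogous interval condition, as already noted in the text. To confirm $P_{i+1}$ is a Hecke tableau for $\omega_{i+1}$, I would prove that each row-insertion step preserves the $0$-Hecke product of the row-reading word: in every case this reduces to a short local identity obtained from $\bar{s}_i^2 = \bar{s}_i$ and the braid relations. For $Q_{i+1}$, the appending case is immediate since a brand new box receives the single entry $i+1$; in the disappearing case the new entry is adjoined to the existing box $b'$ at the bottom of the column containing $b$, and since every prior entry is at most $i$ the set-valued inequalities with respect to the box above $b'$ are automatic, while no box sits directly below or to the right of $b'$ at this step.

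For the inverse, given $(P_m, Q_m)$ I would locate the largest entry $m$ in $Q_m$. If $m$ occupies its box alone, reverse the appending branch by uninserting from the corresponding row and propagating the reverse bumping upward through the rows. If $m$ shares a box with smaller entries, one first verifies as an auxiliary invariant that such a box is necessarily the lowest in its column; the unique column then singles out the row containing the box $b$ from which the disappearance originated, and the reverse bumping again propagates upward to produce the inserted letter $w_m$ and the pair $(P_{m-1}, Q_{m-1})$. Iterating $m$ times recovers the original Hecke word. Injectivity is then automatic, and surjectivity follows because any admissible pair $(P, Q)$ unwinds into a well-defined Hecke word which by induction must have Hecke product $\omega$.

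The main obstacle is the invertibility of the disappearing branch. Placing the recording entry at $b'$ rather than at the disappearance box $b$ is what ensures invertibility, but justifying this convention requires showing that, given the profile of $P_i$ and the column in which $b'$ sits, there is a unique candidate row from which a disappearance could have produced the observed pair. This relies on a careful tracking of the interaction between the bumping path and the column-strictness of the Hecke tableau, and is where the bulk of the combinatorial case analysis lives.
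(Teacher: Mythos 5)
The paper does not prove this proposition; it is quoted from \cite{BKSTY08}, so there is no internal argument to compare against. Judged on its own terms, your proposal has the right architecture (induction on word length, explicit reverse insertion) but contains a genuine gap at exactly the point you flag: you state that ``the bulk of the combinatorial case analysis lives'' in the invertibility of the disappearing branch, and then do not carry out that analysis. Naming the hard step is not the same as doing it, and here it is the entire content of the theorem.

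Concretely, two things are missing. First, reverse row-bumping is not determined merely by ``propagating upward'': the forward algorithm has three sub-cases when $a<y_j$ (namely $a=y_h$ with $y_{h+1}$ passed down; $a<y_h$, $a>x_h$ with $y_h$ bumped and replaced; and $a<y_h$, $a=x_h$ with the row unchanged and $y_h$ passed down), two of which leave the row $Y$ unchanged while emitting different outputs. Given the post-insertion tableau and the element to be pushed back up into row $Y$, you must show that exactly one of these three forward cases is consistent with the data in rows $X$ and $Y$, and you give no criterion for this. Second, in the disappearing branch you assert that the column of $b'$ ``singles out the row containing the box $b$ from which the disappearance originated''; but several rows may terminate in that column, and a disappearance can be caused either by $a=y_j$ or by $a=x_{j+1}$, so uniqueness of the originating row and of the cause both require proof. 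Without these verifications the map you describe is not known to be well defined, let alone inverse to Hecke insertion, so the proposal as written does not establish the bijection.
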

Additionally, Hecke insertion also has the following convenient property:

\begin{lemma}[\cite{BKSTY08}] \label{combined}
 If the word $\alpha_1\cdots \alpha_m$ maps to $(P,Q)$. Then $\alpha_i>\alpha_{i+1}$ if and only if $i+1$ shows up in a row strictly below $i$ in $Q$.
\end{lemma}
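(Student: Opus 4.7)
The plan is to reduce the statement to a two-letter claim: since the labels $1,\dots,j$ of $Q_m$ occupy the same boxes as in $Q_j$, it suffices to fix an intermediate tableau $P = P_{i-1}$ and, setting $a = w_i$ and $b = w_{i+1}$, to show that the box $\alpha$ labelled $i$ in $Q_i$ and the box $\beta$ labelled $i+1$ in $Q_{i+1}$ satisfy $\operatorname{row}(\beta) > \operatorname{row}(\alpha)$ precisely when $a > b$. So the whole lemma reduces to understanding two successive insertions starting from an arbitrary Hecke tableau.

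The heart of the argument is a row-by-row comparison of the two insertion paths. I would let $a_r$ denote the value propagating into the $r$th row during the $a$-insertion (with $a_1 = a$), and define $b_r$ analogously, where ``propagating'' includes the possibility that the path terminates at row $r$ via appending or disappearing. I would then establish by induction on $r$ the invariant: if $a \le b$, then either the $b$-path has already terminated strictly above row $r$, or $b_r$ is inserted into a weakly later column than $a_r$ was (strictly later when $a_r = b_r$); if $a > b$, the symmetric statement holds, with $b_r$ in a strictly earlier column and the $b$-path remaining weakly below. Verifying this invariant amounts to traversing each of the four insertion rules and using the strict monotonicity of $P$ to see how any change in the inserted value propagates to a corresponding shift in the column of insertion at the next row.

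Once the invariant is in place, the conclusion about $\alpha$ and $\beta$ follows quickly: in the case $a \le b$, the $b$-path terminates weakly above the $a$-path and records its box weakly to the northeast, which combined with column-strictness of $Q$ forces $\beta$ to lie weakly above $\alpha$. The case $a > b$ yields $\beta$ strictly below $\alpha$ by a mirror argument.

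The main obstacle, and what makes this argument more delicate than its RSK analog, is the disappearance rule together with the convention that the recording box is placed at the bottom of the column of the rightmost box in the disappearance row, not at the disappearance site itself. This means the ``apparent'' terminal row of an insertion path can differ arbitrarily from the row where its label is actually recorded in $Q$. One must check that this correction preserves the relative row order of $\alpha$ and $\beta$ in every subcase, including the mixed scenarios where one of the two paths ends in an append while the other ends in a disappearance. The cleanest way to handle this is to track, in addition to the invariant above, the exact column in which each path last acts, and then to verify by direct case analysis of the four insertion rules that the resulting bottom-of-column recording boxes sit in rows consistent with the claimed descent correspondence.
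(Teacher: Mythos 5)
First, a point of reference: the paper offers no proof of Lemma \ref{combined} at all --- it is imported verbatim from \cite{BKSTY08}, where the argument is an extended case analysis of insertion paths. Your proposal therefore has to stand on its own, and as it stands it is a correct identification of the standard strategy (the Hecke analogue of the row bumping lemma for RSK) rather than a proof: the entire mathematical content of the statement lives inside the invariant you assert and then decline to verify, and for Hecke insertion that verification is not routine.

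More concretely, two things go wrong. (1) The invariant is false as stated. Take the word $1312$ from Remark \ref{pita} and the pair $a=w_3=1$, $b=w_4=2$, so $a\le b$. In the second row of the two insertions the propagated values coincide, $a_2=b_2=3$: the first is appended in column $1$ of row $2$, while the second disappears against the entry $3$ already sitting in column $1$ of row $2$. So $b_2$ does not act in a strictly later column than $a_2$ even though $a_2=b_2$; the lemma survives only because both labels happen to be recorded in the same box, but your strictness clause does not. This is not a cosmetic issue --- deciding what ``column of insertion'' means for a value that disappears mid-row versus at the end of a row is precisely where the cases multiply. (2) The descent case cannot be dispatched ``by a mirror argument.'' The bottom-of-column recording rule only ever pushes a label \emph{downward}, so the two cases are not symmetric: when $a>b$ and the $a$-insertion terminates by disappearance high in the tableau but is recorded at the bottom of a long column, one must rule out the possibility that the $b$-insertion terminates (say, by appending) in a row above that recording box, which would break $\operatorname{row}(\beta)>\operatorname{row}(\alpha)$. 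Excluding this configuration requires exactly the interaction between insertion paths, column lengths, and corner boxes that your sketch defers to ``direct case analysis.'' Until the invariant is restated so that it tracks, for each path, both the termination type (append, end-of-row disappearance, mid-row match) and the actual recorded box, and each resulting case is checked, this is a plan for a proof rather than a proof.
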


\begin{remark}\label{pita}
Unfortunately, (this fact will cause mild consternation later) the following statement \textbf{is not true} of Hecke insertion: \emph{Suppose  $\alpha_1\cdots \alpha_m$ maps to $(P,Q)$. Then $\alpha_i < \alpha_{i+1}$ if and only if $i+1$ shows up in a column to the right of  $i$ in $Q$.}  In fact, neither the \emph{if} nor the \emph{only if} part of this statement is  true.  For instance applying Hecke insertion to $1322$ results in:
\begin{eqnarray*}
\left\{\young(1),\young(1)\right\}, \left\{\young(13),\young(12)\right\}, \left\{\young(12,3),\young(12,3)\right\},\left\{\young(12,3),\young(1{{24}},3)\right\}
\end{eqnarray*}
showing that although $4$ shows up in a column to the right of $3$ in $Q$, it is not true that $\alpha_3=2<2=\alpha_4$.
Conversely, applying Hecke insertion to $1312$ results in:
\begin{eqnarray*}
\left\{\young(1),\young(1)\right\}, \left\{\young(13),\young(12)\right\}, \left\{\young(13,3),\young(12,3)\right\},\left\{\young(12,3),\young(12,{{34}})\right\}
\end{eqnarray*}
showing that although  $\alpha_3=1<2=\alpha_4$, it is not true that $4$ shows up in a column to the right of $3$ in $Q$.
\end{remark}

\begin{definition}
A Hecke word that has been partitioned into groups of transpositions with decreasing indices is known as a \emph{Hecke factorization}. For instance  $(32)(321)()(1)$ is a  Hecke factorization with four factors for the permutation $(4,1,3,2) \in S_4$.  If $\mathfrak{f}$ is a Hecke factorization we denote by $wt(\mathfrak{f})$ the vector whose $i^{th}$ coordinate records the number of entries in the $i^{th}$ factor (from left to right) of $\mathfrak{f}$.  In general, we will be concerned with Hecke factorizations that have a fixed number of factors.  This number of factors will usually be clear from context, but whenever that number is not of particular importance we will assume it is equal to $m+1$ where $m$ is a positive integer that should be assumed to be the same throughout its appearances in different definitions.
\end{definition}

\begin{definition}
A Hecke factorization of an element in $S_{n+1}$ with $n+1$ factors that has only entries with indices of at least $i$ in the $i^{th}$ subdivision is known as a \emph{bounded  Hecke factorization}. For instance, $(321)(2)(3)()$ is a bounded Hecke factorization in $S_4$.  (Note that the definition implies the last factor is always empty.)   
\end{definition}

\begin{itemize}
\item Let $\mathcal{F}_{w}$ denote the set of all (unbounded)  Hecke factorizations of $w$   into $m+1$ parts for some integer $m$.
\item Let $\mathfrak{F}_{w}$ denote the set of all bounded Hecke factorizations of $w$ into $n+1$ parts.
\end{itemize}

\begin{definition}
A \emph{conjugate (semistandard) set-valued tableau} or $CSVT$ of shape $\lambda$ is a filling of a Young diagram of shape $\lambda$ with the  letters $\{1,\ldots,m,m+1\}$ (repetition allowed) for some integer $m$ such that each box contains a nonempty set of numbers and
\begin{itemize}
\item If box $b$ lies to the left of box $b'$ then $max(b) < max (b')$.
\item If box $b$ lies above box $b'$ then $max(b) \leq max(b')$.
\end{itemize}
A skew $CSVT$ of shape $\lambda/\mu$ is defined similarly.
\end{definition}

\begin{definition}
 \emph{Semistandard Hecke insertion} is the following algorithm.  Starting with $\mathfrak{f} \in \mathcal{F}_{w}$ first consider the underlying Hecke word, $\alpha$, given by erasing the parentheses in $\mathfrak{f}$.  Apply regular Hecke insertion to obtain a pair of tableaux $(P,Q)$.  Now form $Q'$ from $Q$ as follows.  Wherever $j$ appears in $Q$ replace the $j$ with an $i$ where $i$ is chosen such that the $j^{th}$ entry in $\mathfrak{f}$ appears in the $i^{th}$ factor of $\mathfrak{f}$.  The result of the algorithm is the pair $(P,Q')$.
\end{definition}

We end this section with a few results that will be useful later.

\begin{proposition}\cite{BKSTY08}\label{SSbij}
 Semistandard Hecke insertion is a weight preserving bijection  from $\mathcal{F}_{w}$ to pairs $(P,Q)$ where $P \in HT_{w}$ and $Q \in CSVT$ have the same shape.
\end{proposition}

\begin{theorem}\cite{Lascoux90}\label{sam}  We have that  
\begin{eqnarray*}
 \mathfrak{G}_{w}({x})=\sum_{\mathfrak{f} \in {\mathfrak{F}}_{w}  }   {x}^{wt(\mathfrak{f})}.
\end{eqnarray*}
\end{theorem}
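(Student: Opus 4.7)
The plan is induction on the codimension $d(\omega) := \ell(\omega_0) - \ell(\omega)$, using the operator definition of $\mathfrak{G}_\omega$. Write $G_\omega := \sum_{\mathfrak{f} \in \mathfrak{F}_\omega} \mathbf{x}^{wt(\mathfrak{f})}$ for the right-hand side.

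For the base case $\omega = \omega_0$: the $i$-th factor of any $\mathfrak{f} \in \mathfrak{F}_{\omega_0}$ is a strictly decreasing word drawn from $\{\bar{s}_i, \ldots, \bar{s}_n\}$ and so has length at most $n+1-i$, giving total length at most $\binom{n+1}{2}$. Since any Hecke word for $\omega_0$ has length at least $\ell(\omega_0) = \binom{n+1}{2}$, equality is forced, and the unique $\mathfrak{f}$ has $i$-th factor $(\bar{s}_n\bar{s}_{n-1}\cdots\bar{s}_i)$; a direct check verifies that the resulting Hecke word does produce $\omega_0$. The weight is $(n,n-1,\ldots,1,0)$, so $G_{\omega_0} = x_1^nx_2^{n-1}\cdots x_n = \mathfrak{G}_{\omega_0}$.

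For the inductive step, assume the identity for permutations of codimension less than $d(\omega) \geq 1$, and choose $j$ so that $\widetilde\omega := \omega s_j$ satisfies $\ell(\widetilde\omega) = \ell(\omega) + 1$. Then $\omega^{-1}\omega_0 = s_j\widetilde\omega^{-1}\omega_0$ is a length-additive product, so by Lemma \ref{relations} and the operator definition one has $\mathfrak{G}_\omega = \pi_j\mathfrak{G}_{\widetilde\omega}$; by the inductive hypothesis $\mathfrak{G}_{\widetilde\omega} = G_{\widetilde\omega}$, so the task reduces to the combinatorial identity $\pi_j G_{\widetilde\omega} = G_\omega$. Since $\pi_j$ touches only $x_j$ and $x_{j+1}$, one groups the factorizations in $\mathfrak{F}_{\widetilde\omega}$ and $\mathfrak{F}_\omega$ by the data $O$ consisting of all factors other than the $j$-th and $(j+1)$-st; for each outer configuration $O$ the identity reduces to a bivariate statement about the pairs $(F_j, F_{j+1})$ that combine with $O$ to form a bounded Hecke factorization of the relevant permutation.

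The technical heart — and the main obstacle — is this local two-variable identity. In the paper's normalization ($\pi_i^2 = -\pi_i$), the operator $\pi_j$ genuinely produces signed cancellations: for instance $\pi_2(x_1) = -x_1$, so that $\pi_2(x_1 + x_2 + x_1x_2) = -x_1 + 1 + x_1 = 1$, matching $G_{s_2}$ to $G_e$. Thus one cannot hope for a naive sign-free bijection; instead, for each outer $O$ and each resulting intermediate permutation state, one must enumerate which pairs $(F_j, F_{j+1})$ are admissible for $\widetilde\omega$ versus $\omega$, and then verify that $\pi_j$ carries one bivariate generating function to the other after the cancellations take place. This amounts to a finite case analysis driven by the relative order of the entries of the intermediate state at slots $j$ and $j+1$. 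A perhaps cleaner alternative is to realize $\mathfrak{F}_\omega$ bijectively as the set of $K$-theoretic pipe dreams for $\omega$, sending a letter $\bar{s}_j$ in the $i$-th factor to a crossing at position $(i, j-i+1)$; the theorem then follows from the standard (unsigned) pipe-dream formula for $\mathfrak{G}_\omega$.
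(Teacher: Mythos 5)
You have correctly set up the skeleton: the base case computation (forced lengths, unique factorization of weight $(n,n-1,\dots,1,0)$) is right, and the recursion $\mathfrak{G}_{\omega}=\pi_j\mathfrak{G}_{\omega s_j}$ for $\ell(\omega s_j)=\ell(\omega)+1$ follows correctly from length-additivity of $\omega^{-1}\omega_0=s_j(\omega s_j)^{-1}\omega_0$ and Lemma \ref{relations}. (For what it is worth, the paper does not prove this statement at all --- it is imported from \cite{Lascoux90} --- so your attempt is being judged on its own terms.) The problem is that everything after that point is a description of what would need to be done rather than a proof. The identity $\pi_j G_{\omega s_j}=G_{\omega}$ \emph{is} the theorem; calling it ``a finite case analysis'' does not discharge it. Two specific things are missing. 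First, the localization to a fixed outer configuration $O$ is not justified: which pairs $(F_j,F_{j+1})$ complete $O$ to a bounded Hecke factorization of $\omega s_j$ (resp.\ $\omega$) is governed by the Demazure product of the entire word, the two slots are constrained asymmetrically (entries $\geq j$ versus $\geq j+1$), and it is not even clear that the set of admissible $O$'s is the same for $\omega s_j$ and for $\omega$ --- if some $O$ admits completions for one but not the other, the per-$O$ identity you want to verify is false and only the global sum survives. Second, as your own example $\pi_2(x_1+x_2+x_1x_2)=1$ shows, $\pi_j$ produces signed cancellation, so no term-by-term or injection argument is available; the analogous statement for Schubert polynomials (the Billey--Jockusch--Stanley formula) already requires genuine machinery (Fomin--Stanley's Yang--Baxter/nilCoxeter argument, or Bergeron--Billey ladder moves), and nothing of that sort is supplied or replaced here.

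The pipe-dream ``alternative'' does not rescue the argument: the translation of a bounded Hecke factorization into a $K$-theoretic pipe dream (letter $\bar{s}_j$ in factor $i$ $\mapsto$ crossing at $(i,j-i+1)$) is plausible and standard, but the ``standard unsigned pipe-dream formula for $\mathfrak{G}_{\omega}$'' is precisely this theorem in disguise, so invoking it is a citation rather than a proof. Citing the literature is of course legitimate --- it is what the paper itself does --- but then the inductive apparatus is superfluous and the proposal should be honest that it proves nothing beyond the base case. To complete the argument along your chosen lines you would need to actually exhibit, for each $j$, a weight-controlled correspondence (with cancellation) between $\mathfrak{F}_{\omega s_j}$ and $\mathfrak{F}_{\omega}$ realizing $\pi_j$, or else adapt the Yang--Baxter proof to the $0$-Hecke/exponential setting.
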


\begin{lemma} \cite{Buch02}
Suppose that $\ell_{1}\geq \ell_{2}$.  We have: \label{tabtopi}
\begin{eqnarray*}
\sum_{Q \in CSVT(\ell_1, \ell_2)} x_r^{(\#1)}x_{r+1}^{(\#2)}=\pi_{s_r}(x_r^{\ell_1+1}x_{r+1}^{\ell_2})
\end{eqnarray*}
where the left hand sum is over all $CSVT$ with two columns, of lengths $\ell_1$ and $\ell_2$, in the letters $\{1,2\}$ and where $(\#1)$ and $(\#2)$ is the number of $1$s and $2$s respectively in $Q$.
\end{lemma}

\section{Double Grothendieck Polynomials}
Consider two sets of variables $x=(x_1,\ldots,x_{n+1})$ and $y=(y_1,\ldots,y_{n+1})$.  We extend the action of $\pi_{s_r}$ linearly over $\mathbb{Z}[y]$ to get an action on $\mathbb{Z}[x,y]$.   

\begin{definition}
A \emph{circled Hecke factorization} is a factorization of a Hecke word into factors, where some of the elements have been circled.  Moreover, each factor must be decreasing in the order $\circled{1}<1<\circled{2}< \cdots <\circled{n} <n$. For instance  $\big(32\circled{2}\big)\big(\circled{3}21\circled{1}\big)\big(\big)\big(\circled{1}\big)$ is a circled Hecke factorization for the permutation $(4,1,3,2) \in S_4$.  
\end{definition}

\begin{definition}
 A \emph{bounded circled  Hecke factorization} is a circled  Hecke factorization with $n+1$ factors such that all the elements in the $i^{th}$ factor are $\geq \circled{i}$. For instance, $\big(\circled{4}3\circled{2}1\big)\big(3\circled{3}\big)\big(43\circled{3}\big)\big(4\big)\big(\big)$ is a bounded circled Hecke factorization for the permutation $(5,1,4,3,2) \in S_5$.  The \emph{$x$-weight} of  a bounded circled  Hecke factorization is the vector whose $i^{th}$ entry records the number of uncircled elements in its $i^{th}$ factor. The $x$-weight of the example above is $(2,1,2,1,0)$.   The \emph{$y$-weight} of such a factorization is the vector whose $i^{th}$ entry records the number of circled entries that have some value $j$ and appear in some factor $k$ such that $j-k+1=i$.  The $y$-weight of the example above is $(1,2,0,1,0)$. 
 \end{definition}

\begin{definition}
A \emph{double Hecke factorization} is a factorization into an even number of factors where the first half of the factors are increasing in the order $1 < \cdots <n$ and the last half of the factors are decreasing in the order $1 < \cdots <n$.  For example $(12)(13)|(21)(32)$ is a double Hecke factorization for $(4,3,2,1) \in S_4$, where we have drawn a ``$|$" between the left half and the right half of the factors for viewing convenience.  (Such factorizations are combinatorially equivalent to (non-reduced)  pipe dreams as used to express double Grothendieck polynomials.  See \cite{M16} for example.)
\end{definition}

\begin{definition}
A \emph{bounded double Hecke factorization} is a double Hecke factorization into $2n+2$ factors where all elements in the $i^{th}$ factor to the right of center are $\geq i$ and all elements in the $i^{th}$ factor to the left of center are $\geq i$.    For instance, $()(3)(2)(12)|(31)(32)(3)()$ is a bounded double  Hecke factorization for the permutation $(4,3,2,1) \in S_4$. The \emph{$x$-weight} of  a bounded or unbounded double  Hecke factorization is the vector whose $i^{th}$ entry records the number of  elements in the $i^{th}$ factor to the right of center. The $x$-weight of the example above is $(2,2,1,0)$.   The \emph{$y$-weight} of  a bounded or unbounded double Hecke factorization is the vector whose $i^{th}$ entry records the number of  entries in the $i^{th}$ factor to the left of center.  The $y$-weight of the example above is $(2,1,1,0)$.
\end{definition} 

We will use the following notation:

\begin{itemize}
\item Let ${\mathcal{F}}^{\ocircle}_{w}$ denote the set of all (unbounded) circled Hecke factorizations of $w$   into $m+1$ parts. 
\item Let ${\mathfrak{F}}^{\ocircle}_{w}$ denote the set of all bounded circled Hecke factorizations of $w$ into $n+1$ parts.
\item Let ${\mathcal{F}}^{\square}_{w}$ denote the set of all (unbounded) double Hecke factorizations of $w$ into $2m+2$ parts.
\item Let ${\mathfrak{F}}^{\square}_{w}$ denote the set of all bounded double Hecke factorizations of $w$ into $2n+2$ parts.
\end{itemize}

 If $\mathfrak{f}$ is one of the factorizations above we write $(x,y)^{wt(\mathfrak{f})}$to mean the monomial $x^{{wt_x}(\mathfrak{f})}y^{{wt_y}(\mathfrak{f})}$ where $wt_x(\mathfrak{f})$ and $wt_y(\mathfrak{f})$ refer to the $x$-weight and $y$-weight of $\mathfrak{f}$ respectively. If two Hecke words represent the same permutation, we denote this by writing a ``$\sim$" between them.  Moreover, if $\mu$ is any permutation, let $\widetilde{\mu}$ denote an arbitrary Hecke word for $\mu$. Finally, if $\mu$ is any  permutation let $X_{\mu}$ be the set of all pairs of permutations $(u,v)$ such that the concatenation $\widetilde{u}\widetilde{v}$ represents the permutation $\mu$.   Finally, we need one more definition before we can state the main result:

\begin{definition}\label{norm}  The \emph{double Grothendieck polynomial} for $w$ is \cite{Lascoux.85}:
\begin{eqnarray*}
 \mathfrak{G}_{w}(x,y)=\pi_{(w^{-1}w_0)}\left(\prod_{i+j \leq n+1} x_i+y_j +x_iy_j\right).
\end{eqnarray*}
\end{definition}

The rest of this section will be devoted to proving that:
\begin{theorem}\label{Cauch}
We have: 
\begin{align}
 \mathfrak{G}_{w}(x,y)=\sum_{\mathfrak{f}\in {\mathfrak{F}}^{\ocircle}_{w}} (x,y)^{wt(\mathfrak{f})}& \\
 \mathfrak{G}_{w}(x,y)=\sum_{\mathfrak{f}\in {\mathfrak{F}}^{\square}_{w}} (x,y)^{wt(\mathfrak{f})}& \text{\cite{FK94}, \cite{KM04}}\\
 \mathfrak{G}_{w}(x,y)=\sum_{(u,v) \in X_{w}} \mathfrak{G}_{u^{-1}}(y) \mathfrak{G}_v(x) &\text{\cite{FK94},\cite{McNamara06},\cite{BFHTW}}.
\end{align}
\end{theorem}
The theorem will follow immediately from combining the three main Lemmas of this section: \ref{stb},  \ref{cts}, and  \ref{ntb}.

\begin{lemma}\label{stb}
We have \begin{eqnarray*}
\sum_{\mathfrak{f}\in {\mathfrak{F}}^{\square}_{w}} (x,y)^{wt(\mathfrak{f})}=\sum_{(u,v) \in X_{w}} \mathfrak{G}_{u^{-1}}(y)  \mathfrak{G}_v(x).
\end{eqnarray*}
\end{lemma}
\begin{proof}
We have:
\begin{eqnarray*}
\sum_{\mathfrak{f}\in {\mathfrak{F}}^{\square}_{w}} (x,y)^{wt(\mathfrak{f})}=\sum_{(u,v) \in X_{w}}  \sum_{\mathfrak{f}\in {\mathfrak{F}}^{\square}_{(u,v)}} (x,y)^{wt(\mathfrak{f})}
\end{eqnarray*}
where ${ \mathfrak{F}}^{\square}_{(u,v)}$ is the subset of  ${\mathfrak{F}}^{\square}_{w}$ such that the left  (resp. right) $n+1$ factors give a Hecke word for the permutation $u$ (resp. $v$).  Since the left side of a bounded double Hecke factorization for $u$ is just a bounded Hecke factorization for $u^{-1}$ (written in reverse order) the lemma follows.
\end{proof}
 If $\mu$ is a permutation then define
\begin{itemize}
\item $\vee(\mu)=\{(a,b): ab \sim \widetilde{\mu}$,  $a$ is strictly decreasing, $b$ is strictly increasing.$\}$
\item $\wedge(\mu)=\{(a,b): ab \sim \widetilde{\mu}$,  $a$ is strictly increasing, $b$ is strictly decreasing.$\}$
\end{itemize}
\begin{claim}
There is a bijection from $\wedge(\mu) \rightarrow \vee (\mu)$  such that, denoting word length by $|\cdot|$, if $(b,c) \rightarrow (a,d)$ then $|a|=|c|$  and $|b|=|d|$.
\end{claim}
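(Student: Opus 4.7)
The natural approach is to use Hecke insertion to convert the pair $(b,c)$ to a canonical pair of tableaux from which the target pair can be recovered by a different reading. Given $(b,c)\in\wedge(\mu)$, I would apply Hecke insertion to the concatenated word $bc$ to obtain a pair $(P,Q)$ with $P\in HT_\mu$ and $Q$ a standard set-valued tableau of the same shape, then show that a relabeling of $Q$ produces a new pair arising from a word $ad$ of the desired form.

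The key structural input is Lemma \ref{combined}: the strictly increasing structure of $b$ forces, for $1 \le i < p$, the label $i{+}1$ in $Q$ to not sit strictly below $i$, while the strictly decreasing structure of $c$ forces, for $p \le i < p+q$, each $i{+}1$ to sit strictly below $i$. This gives $Q$ a rigid ``mountain'' shape on its labels. To build $(a,d) \in \vee(\mu)$ I would construct $Q'$ by relabeling: send $\{1,\dots,p\}\to\{q+1,\dots,p+q\}$ and $\{p+1,\dots,p+q\}\to\{1,\dots,q\}$, adjusted so that the resulting tableau has first $q$ labels staircasing strictly downward (giving a decreasing prefix) and last $p$ labels arranged in a complementary pattern (giving an increasing suffix). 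The inverse Hecke insertion applied to $(P,Q')$ then outputs a word $w'$ for $\mu$; the factorization forced by the descent/non-descent structure of $Q'$ produces the desired $(a,d)$ with $|a|=q$ and $|d|=p$.

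The main obstacle, flagged explicitly in Remark \ref{pita}, is the asymmetry of Hecke insertion: descents of a word correspond cleanly to rows of $Q$ via Lemma \ref{combined}, but ascents do not correspond cleanly to columns. Consequently the relabeling above cannot be justified by a direct appeal to a dual lemma, and extra work is needed to verify that $Q'$ is actually a valid standard set-valued tableau whose reverse insertion gives a word with the required ``valley'' factorization. If this tableau-theoretic approach becomes unwieldy, the fallback plan is a direct recursive bijection operating on the Hecke word $bc$ itself: use commutation ($|i-j|>1$), the braid relation, and idempotence $\bar{s}_i^2=\bar{s}_i$ at the peak (where $b_p$ meets $c_1$) to migrate one letter at a time across the boundary, inducting on $|b|+|c|$. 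The case analysis is driven by the relative sizes of $b_p$ and $c_1$; the delicate point is maintaining strict monotonicity of both parts, which forces careful use of the idempotent relation whenever a move would otherwise create a duplicate. Either route ultimately relies on length preservation being transparent from the construction, so the bulk of the work lies in verifying well-definedness and invertibility.
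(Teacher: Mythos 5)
Your primary route (Hecke insert $bc$ to get $(P,Q)$, relabel $Q$, reverse-insert) is blocked by exactly the obstruction you flag. Remark \ref{pita} shows that ascents of the inserted word do \emph{not} correspond to column relations in $Q$, so there is no dual of Lemma \ref{combined} that would let you prescribe, by relabeling, a recording tableau whose reverse insertion is guaranteed to produce a word with a strictly decreasing prefix followed by a strictly increasing suffix. Saying ``extra work is needed'' concedes the entire difficulty: this failure of Hecke insertion is precisely what forces the paper into the reverse-and-transpose detour in the proof of Proposition \ref{flipn}, and it is why the paper does \emph{not} prove this claim by insertion at all. As written, the insertion route cannot be completed.

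Your fallback is the right genre of argument but is missing the organizing idea that makes the case analysis finite. The paper interpolates between $\wedge(\mu)$ and $\vee(\mu)$ through sets $W^k(\mu)$ of quadruples $(a,b,c,d)$ with $abcd$ a Hecke word for $\mu$, where $a,c$ are decreasing, $b,d$ are increasing, the middle words $b,c$ use only letters $\leq k$ and the outer words $a,d$ only letters $>k$; the bijection $W^{k+1}(\mu)\to W^k(\mu)$ strips the value $K=k+1$ out of the middle and deposits it on the outside. The point is that every letter of $b$ and $c$ other than $k$ and $K$ commutes with $K$, and monotonicity forces the $k$'s and $K$'s to sit adjacently at the junction, so the whole step reduces to an explicit twelve-case table in which the invariants $|a|+|c|$ and $|b|+|d|$ are visibly preserved (yielding $|a|=|c|$ and $|d|=|b|$ at $W^0$). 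By contrast, ``migrate one letter at a time across the boundary'' has no well-defined elementary move: $b_{|b|}$ generally does not commute past all of $c$, and ad hoc applications of the braid and idempotent relations neither obviously preserve the monotonicity of the two halves nor obviously preserve the two length sums, which are not symmetric under arbitrary rewriting of the Hecke word. Without the value-by-value stratification (or some substitute for it), neither well-definedness nor length preservation is ``transparent.''
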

\begin{proof}
Denote by $W^k(\mu)$ the set of all quadruples of Hecke words $(a,b,c,d)$ such that the concatenation $abcd$ is a Hecke word for $\mu$ and such that
\begin{itemize}
\item $a$ and $c$ are strictly decreasing.
\item $b$ and $d$ are strictly increasing.
\item $b$ and $c$ only contain elements from the set $\{1,2,\ldots,k\}$.
\item $a$ and $d$ only contain elements from the set $\{k+1,\ldots,n\}$.
\end{itemize}
It suffices to find a bijection $W^{k+1}(\mu) \rightarrow W^k(\mu)$, such that if $(a,b,c,d) \rightarrow (a',b',c',d')$ then $|a|+|c|=|a'|+|c'|$ and $|b|+|d|=|b'|+|d'|$.  The bijection is given by the identity in the case that no Hecke word for $\mu$ contains a $k+1$.  If some Hecke word for $\mu$ does contain a $k+1$ then set $K=k+1$.  In this case the bijection fixes all entries which are not equal to $k$ or $K$ and changes the entries equal to $k$ or $K$ as follows:

\begin{align*}
(\f)(\f kK)(Kk \f)(\f) &\rightarrow 
(\f K)(\f k)(k \f)(K\f)\\
(\f)(\f kK)(k \f)(\f)  &\rightarrow 
(\f K)(\f k)( \f)(K\f)\\
(\f)(\f k)(Kk \f)(\f)  &\rightarrow 
(\f K)(\f )(k \f)(K \f)\\
(\f)(\f kK)(K \f)(\f) &\rightarrow 
(\f )(\f k)(k \f)(K\f)\\
(\f)(\f K)(Kk \f)(\f) &\rightarrow 
(\f K)(\f k)(k \f)(\f)\\
(\f)(\f K)(K \f)(\f)  &\rightarrow 
(\f K)(\f )( \f)(K \f)\\
(\f)(\f K)(k \f)(\f)  &\rightarrow 
(\f K)(\f k)( \f)( \f)\\
(\f)(\f k)(K \f)(\f)  &\rightarrow 
(\f )(\f )(k \f)(K \f)\\
(\f)(\f kK)( \f)(\f)  &\rightarrow 
(\f )(\f k)( \f)(K \f)\\
(\f)(\f )(Kk \f)(\f)  &\rightarrow 
(\f K)(\f )(k \f)( \f)\\
(\f)(\f K)( \f)(\f)  &\rightarrow 
(\f )(\f )( \f)(K \f)\\
(\f)(\f )(K \f)(\f)  &\rightarrow 
(\f K )(\f )( \f)( \f)\\
\end{align*}
It is easy to see that this defines a bijection with the desired properties.
\end{proof}
We denote the map $\wedge(\mu) \rightarrow \vee(\mu)$ by $\downarrow$ and its inverse by $\uparrow$.
\begin{example}
Let $(123568)(8752) \in \wedge (\mu)$.  Thus:
\begin{eqnarray*}
W^8=()(123568)(8752)()\\
W^7=(8)(123567)(752)()\\
W^6=(8)(12356)(652)(7)\\
W^5=(86)(1235)(52)(67)\\
W^4=(865)(123)(2)(567)\\
W^3=(865)(123)(2)(567)\\
W^2=(8653)(12)()(3567)\\
W^1=(8653)(1)()(23567)\\
W^0=(8653)()()(123567)
\end{eqnarray*}
so that $\downarrow(123568)(8752)=(8653)(123567)  \in {\vee}(\mu)$.
\end{example}
\begin{lemma}\label{cts}
We have \begin{eqnarray*}
\sum_{\mathfrak{f}\in {\mathfrak{F}}^{\ocircle}_{w}} (x,y)^{wt(\mathfrak{f})}=\sum_{\mathfrak{f}\in {\mathfrak{F}}^{\square}_{w}} (x,y)^{wt(\mathfrak{f})}.
\end{eqnarray*}
\end{lemma}
\begin{proof}
We need to find a bijection from ${\mathfrak{F}}^{\ocircle}_{w}$ to ${\mathfrak{F}}^{\square}_{w}$ that preserves the $x$-weight and the $y$-weight. 
The arguments are quite technical and the reader is encouraged to use example \ref{techex} as a running example while reading through the proof.

 For each $k \in \{1,2,\ldots,n\}$ and $j \in  \{0,1,\ldots,n-k+1\}$ we define the set ${\mathfrak{F}}^{jk}_{w}$ 
to be the set of factorizations $\mathfrak{f}=f_{-(n+1)},\ldots,f_{-(k+1)},f_1,\ldots f_j,f_{mov},f_{j+1},\ldots,f_{n+1}$ such that the Hecke word given by this factorization represents $w$ and where:

\begin{itemize}
\item The factors  $f_{-(n+1)},f_{-(n)},\ldots,f_{-(k+1)}$ are called the \emph{left} factors and:
\begin{enumerate}
\item Each left factor $f_{-(i)}$ only contains elements from the set $\{i,\ldots,n\}$.
\item Each left factor is strictly increasing in the order $1<\cdots<n$. 
\end{enumerate}
\item The factors $f_1,\ldots,f_j,f_{j+1},\ldots ,f_{n+1}$ are called the \emph{right} factors and:
\begin{enumerate}
\item The uncircled elements that each right factor $f_{i}$ contains must come from the set $\{i,\ldots,n\}$.
\item For $i \leq j$, the circled elements that the right factor $f_{i}$ contains must come from the set $\{\circled{i},\ldots, \circled{s}\}$ where $s=k+i-1$.
\item For $i > j$, the circled elements that the right factor $f_{i}$ contains must come from the set $\{\circled{i},\ldots, \circled{t}\}$ where $t=k+i-2$.
\item The right  factors strictly decrease in the order $\circled{1}<1<\cdots<\circled{n}<n$.
\end{enumerate}
\item The factor  $f_{mov}$, or the \emph{moving} factor, has the properties:
\begin{enumerate}
\item $f_{mov}$ contains elements from $\{j+k,\ldots,n\}$.
\item  $f_{mov}$ is strictly increasing in the order $1<\cdots<n$.
\end{enumerate}
\end{itemize}

If $\mathfrak{f} \in {\mathfrak{F}}^{jk}_{w}$ we define $(x,y)^{wt(\mathfrak{f})}$ to be the monomial such that the power of $x_i$ is the number of uncircled elements in $f_i$.  For $i>k$ the power of $y_i$ is the number of elements in $f_{-i}$.  For $i<k$ the power of $y_i$ is the number of times some $\circled{m}$ appears in some factor $f_{\ell}$ such that $m-\ell+1=i$.    The power of $y_k$ is the number of times some $\circled{m}$ appears in some factor $f_{\ell}$ such that $m-\ell+1=k$ plus the number of elements in $f_{mov}$.   
(Essentially what we want to do now is show that we can take a factorization of the form $f_{-(n+1)}\cdots f_{-(k+1)}f_1\cdots f_jf_{mov}f_{j+1}\cdots f_{n+1}$  and  move the moving factor $f_{mov}$  from the right of $f_j$ to its left via some process.  If we repeat this process eventually we can pull $f_{mov}$ all the way to the left of $f_1\cdots f_{n+1}$ and make it $f_{-k}$. If this in turn can be done for each $k$ it means that if we start with a factorization of the form $f_1 \cdots f_{n+1}  \in {\mathfrak{F}}^{\ocircle}_{w}$ we can get one of the form $f_{-(n+1)} \cdots f_{-1} f_1 \cdots f_{n+1}$. We will then want to show the latter lies in  $\mathfrak{F}^{\square}_{w}$ .)

\begin{example}\label{techex}
Let $w=(4,3,2,1) \in S_4$.  The following sequence of factorizations would be computed under the bijection described later in this proof to get from an element of ${\mathfrak{F}}^{\ocircle}_{w}$ to an element of ${\mathfrak{F}}^{\square}_{w}$.  The moving factor is shown in red.  Note that for fixed $k$ the moving factor always moves to the left.  When $k$ decreases by one, the new moving factor starts one position to the right of where the moving factor for the previous $k$ started.
\begin{align*}
&&\big(3\circled{3}\circled{2}1\circled{1}\big)
\big(\circled{3}2\big)
\big(3\circled{3}\big)
 \big(\big)\in \mathfrak{F}_{w}^{\ocircle}\\
\textcolor{blue}{()}&|&
\big(3\circled{3}\circled{2}1\circled{1}\big)
\textcolor{red}{()}
\big(\circled{3}2\big)
\big(3\circled{3}\big)
 \big(\big)\in \mathfrak{F}_{w}^{13}\\
 \textcolor{blue}{()}&|&
 \textcolor{red}{(3)}
\big(3\circled{2}1\circled{1}\big)
\big(\circled{3}2\big)
\big(3\circled{3}\big)
 \big(\big)\in \mathfrak{F}_{w}^{03}\\
  \textcolor{blue}{()(3)}&|&
\big(3\circled{2}1\circled{1}\big)
\big(\circled{3}2\big)
 \textcolor{red}{()}
\big(3\circled{3}\big)
\big(\big) \in \mathfrak{F}_{w}^{22}\\
  \textcolor{blue}{()(3)}&|&
 \big(3\circled{2}1\circled{1}\big)
  \textcolor{red}{(3)}
\big(2\big)
\big(3\circled{3}\big)
\big(\big) \in \mathfrak{F}_{w}^{12}\\
   \textcolor{blue}{()(3)}&|&
 \textcolor{red}{(23)}
 \big(21\circled{1}\big)
\big(2\big)
\big(3\circled{3}\big)
\big(\big) \in \mathfrak{F}_{w}^{02}\\
    \textcolor{blue}{()(3)(23)}&|&
 \big(21\circled{1}\big)
\big(2\big)
\big(3\circled{3}\big)
 \textcolor{red}{()}
\big(\big) \in \mathfrak{F}_{w}^{31}\\
     \textcolor{blue}{()(3)(23)}&|&
 \big(321\circled{1}\big)
\big(2\big)
 \textcolor{red}{(3)}
\big(3\big)
\big(\big) \in \mathfrak{F}_{w}^{21}\\
      \textcolor{blue}{()(3)(23)}&|&
 \big(321\circled{1}\big)
 \textcolor{red}{(2)}
\big(3\big)
\big(3\big)
\big(\big) \in \mathfrak{F}_{w}^{11}\\
       \textcolor{blue}{()(3)(23)}&|&
        \textcolor{red}{(13)}
 \big(321\big)
\big(3\big)
\big(3\big)
\big(\big) \in \mathfrak{F}_{w}^{01}\\
        \textcolor{blue}{()(3)(23)(13)}&|&
 \big(321\big)
\big(3\big)
\big(3\big)
\big(\big) \in \mathfrak{F}_{w}^{\square}\\
\end{align*}
\end{example}

We begin by noting that ${\mathfrak{F}}^{1n}_{w}={\mathfrak{F}}^{\ocircle}_{w}$ and that  ${\mathfrak{F}}^{01}_{w}={\mathfrak{F}}^{\square}_{w}$.  Moreover, we have that ${\mathfrak{F}}^{0k}_{w}={\mathfrak{F}}^{(n-k+2)(k-1)}_{w}$ for $k>1$.  Hence it suffices to find an $x$-weight and $y$-weight preserving bijection from ${\mathfrak{F}}^{jk}_{w}$ to ${\mathfrak{F}}^{(j-1)k}_{w}$ for $j \in \{1,\ldots,n-k+1\}$ and $k \in \{1,\ldots,n\}$. 
To do the latter it suffices to find a bijection, $\Psi_{jk}$ between pairs $(f_j,f_{mov})$ such that:
\begin{itemize}
\item $f_{j}$ contains uncircled elements only from $\{j,\ldots,n\}$.
\item $f_{j}$ contains circled elements only from $\{\circled{j},\ldots, \circled{s}\}$ where $s=j+k-1$.
\item $f_{mov}$ contains elements from $\{j+k,\ldots,n\}$
\item  $f_{mov}$ is strictly increasing in the order $1<\cdots<n$.
\item $f_j$ is strictly decreasing in the order $\circled{1}<1<\cdots<\circled{n}<n$.
\end{itemize}
to pairs $(f_{mov}',f_{j}')$ such that:
\begin{itemize}
\item $f_{j}'$ contains uncircled elements only from $\{j,\ldots,n\}$.
\item $f_{j}'$ contains circled elements only from $\{\circled{j},\ldots, \circled{t}\}$ where $t=j+k-2$.
\item $f_{mov}'$ contains elements from $\{j+k-1,\ldots,n\}$
\item  $f_{mov}'$ is strictly increasing in the order $1<\cdots<n$.
\item $f_j'$ is strictly decreasing in the order $\circled{1}<1<\cdots<\circled{n}<n$.
\end{itemize}
with the property that if $(f_j,f_{mov}) \rightarrow (f_{mov}',f_{j}')$ then the Hecke words $f_jf_{mov}$ and $f_{mov}'f_j'$ represent the same permutation and whenever $(f_j)(f_{mov})$ appears in an element of ${\mathfrak{F}}^{jk}_{w}$ (in the expected position) these two factors make the same contribution to the $x$-weight and $y$-weight as the pair $(f_{mov}')(f_{j}')$ when it appears in an element of ${\mathfrak{F}}^{(j-1)k}_{w}$ (in the expected position).

To do this write $f_j=f_j^>f_j^=f_j^<$ where $f_j^>$, $f_j^=$, $f_j^<$ are  the parts of $f_j$ composed, respectively, of elements greater than, equal to, or less than $\circled{s}$ (where $s=j+k-1$) in the order $\circled{1}<1<\cdots<\circled{n}<n$.  If $f_{j}^=$ is nonempty then append $s$ to the left of $f_{mov}$ to form $f_{mov}^+$.  Otherwise set $f_{mov}^+ =f_{mov}$.  Then let $(g_1,g_2)=\uparrow(f_j^>,f_{mov}^+)$.  We define $\Psi_{jk}(f_j,f_{mov})=(f_{mov}',f_j')$ where $f_{mov}'=g_1$ and $f_j'=g_2f_j^<$.

On the other hand given a pair $(f'_{mov},f_j')$ write $f_j'=f_j'^{>}f_j'^{<}$ where $f_j'^{>}$ and $f_j'^{<}$ are  the parts of $f_j'$ composed, respectively, of elements greater than or less than $\circled{s}$ in the order $\circled{1}<1<\cdots<\circled{n}<n$.  Next set $(h_1,h_2)=\downarrow(f_{mov}',f_j'^>)$.  Now write $h_2=h_2^=h_2^>$  where $h_2^=$ and $h_2^>$ are  the parts of $h_2$ composed respectively of elements equal to or greater than $s$ in the order $1<\cdots<n$.  We define $\Psi_{jk}^{-1}(f_{mov}',f_{j}')=(f_{j},f_{mov})$ where $f_{j}=h_1f_j'^<$ if $h_2^=$ is empty and $f_{j}=h_1\circled{s}f_j'^<$ otherwise and $f_{mov}=h_2^>$.

The fact that $\uparrow$ and $\downarrow$ preserve the permutation represented along with the commutation of nonadjacent transpositions implies that $\Psi$ and $\Psi^{-1}$ do not change the permutation represented.  Moreover the constructions of $\Psi$ and $\Psi^{-1}$ make it clear that they map into the proper images.  One can easily check that $\Psi^{-1} \circ \Psi$ is the identity by considering the two cases where either $f_j$ contains a $\circled{s}$ or does not.   Similarly, one can  check that $\Psi \circ \Psi^{-1}$ is the identity by considering the two cases where either the $h_2$ of the construction of $\Psi^{-1}$ contains an $s$ or does not. 

 Finally, if $(f_j,f_{mov}) \rightarrow (f_{mov}',f_j')$, we need to check these two pairs make the same contributions to the $x$-weight and $y$-weight of the factorization they are part of.  For the first pair, the contribution to the $x$-weight is simply to add $r$ to the $j^{th}$ coordinate of the $x$-weight where $r$ is the number of uncircled elements in $f_j$.   For the second pair, the contribution to the $x$-weight is simply to add $r'$ to the $j^{th}$ coordinate of the $x$-weight where $r'$ is the number of uncircled elements in $f_j'$.    Clearly the construction of $\Psi$ implies that $r=r'$.  
Now the circled elements of $f_j$ and $f_j'$ other than $\circled{s}$ (which only affects the $k^{th}$ coordinate of the $y$-weight since $s-(j-1)=k$) are the same and $f_{mov}$ and $f_{mov}'$ only affect the $k^{th}$ coordinate of the $y$-weight.  Thus it suffices just to check that $(f_j)(f_{mov})$ and $ (f_{mov}')(f_j')$ make the same contribution to the $k^{th}$ coordinate of the $y$-weight.  If $f_j$ does not contain a $\circled{s}$ then $f_{mov}$ and $f_{mov}'$ have the same length $\ell$ and the contribution to the $k^{th}$ coordinate of the $y$-weight is just $\ell+0$ in either case since neither $f_j$ nor $f_j'$ contain a $\circled{s}$.  If $f_j$ does contain a $\circled{s}$ then if $f_{mov}$ has length $\ell$ then  $f_{mov}'$ has length $\ell+1$. The contribution to the $k^{th}$ coordinate of the $y$-weight from $(f_j)(f_{mov})$ is $(1)+(\ell)$ since $f_j$ contains one $\circled{s}$. The contribution to the $k^{th}$ coordinate of the $y$-weight from $ (f_{mov}')(f_j')$ is $(\ell+1)+(0)$ since $f_j'$ contains no $\circled{s}$.
\end{proof}
\begin{example}
Set $n=9$ and $k=3$ and $j=2$.  Suppose that $f_j=(9764\circled{4}\circled{3}2\circled{2})$ and $f_{mov}=(5689)$.  Then as in the construction of $\Psi_{jk}$ we set $s=j+k-1=4$ and $f_j^>$ becomes $(9764)$ while $f_j^=$ becomes $(\circled{4})$ and $f_j^<$ becomes $(\circled{3}2\circled{2})$ .  To compute $f_{mov}^+$ we append a $4$ to $f_{mov}$,  and so $f_{mov}^+$ becomes $(45689)$.  Next we set $(g_1,g_2)=\uparrow(9764)(45689)$. To evaluate this we compute:
\begin{eqnarray*}
W^3=(9764)()()(45689)\\
W^4=(976)(4)(4)(5689)\\
W^5=(976)(45)(5)(689)\\
W^6=(97)(456)(65)(89)\\
W^7=(9)(457)(765)(89)\\
W^8=(9)(4578)(865)(9)\\
W^9=()(45789)(9865)()
\end{eqnarray*}
and see that $g_1=(45789)$ and $g_2=(9865)$.  Therefore we get that $f_{mov}'=g_1=(45789)$ and  $f_j'=g_2f_j^<=(9865\circled{3}2\circled{2})$.  All in all, we see that $\Psi_{23}$ sends
\begin{eqnarray*}
 (9764\circled{4}\circled{3}2\circled{2})(5689)\rightarrow (45789)(9865\circled{3}2\circled{2}).
\end{eqnarray*}
\end{example}
\begin{lemma}\label{ntb}
We have 
\begin{eqnarray*}
 \mathfrak{G}_{w}(x,y)=\sum_{(u,v) \in X_{w}} \mathfrak{G}_{u^{-1}}(y) \mathfrak{G}_v(x).
\end{eqnarray*}
\end{lemma}
\begin{proof}
We proceed by induction on the number of inversions of $w^{-1} w_0$.  First suppose that this number is $0$.  That is, $w=w_0$.  
First we compute
\begin{eqnarray*}\sum_{\mathfrak{f}\in {\mathfrak{F}}^{\circ}_{w_0}} (x,y)^{wt(\mathfrak{f})}.
\end{eqnarray*}
  For any $\mathfrak{f} \in \mathfrak{F}_{w_0}^{\circ}$ the boundedness condition implies that for each $i$, the $i^{th}$ factor of $\mathfrak{f}$ contains a subset of $\{\circled{$i$},i,\ldots,\circled{$n$},n\}$.  Letting $\ell_i$ denote the number of distinct numerical values that appear (uncircled, circled, or both) in the $i^{th}$ factor of $\mathfrak{f}$, it is clear that the inversion number of the permutation represented by $\mathfrak{f}$ is  bounded by $\sum \ell_i$, which, in turn is bounded by $n+(n-1)+\cdots+1+0={n+1 \choose 2}$.  But ${n+1 \choose 2}$ actually is the inversion number of $w_0 \in S_{n+1}$.  Thus $\sum \ell_i={n+1 \choose 2}$, which means that $\ell_i=n+1-i$ for each $i$ or that all the numerical values $\{i,i+1,\ldots,n\}$ show up in the $i^{th}$ factor of $\mathfrak{f}$ (uncircled, circled, or both).  This means that there are precisely $3^{{n+1 \choose 2}}$ factorizations in $\mathfrak{F}_{w_0}^{\circ}$.  Each factorization, $\mathfrak{f}$, is specified by choosing, for each $i \in [1,n+1]$ and $j \in [1,n+1-i]$ whether the value $(i+j-1)$ appears in the $i^{th}$ factor as circled, uncircled, or both.  The value of $(x,y)^{wt(\mathfrak{f})}$ is computed by starting with $1$ and, for each $i \in [1,n+1]$ and $j \in [1,n+1-i]$ multiplying by $x_i$, $y_j$, or $x_iy_j$ depending on whether the value $(i+j-1)$ appears in the $i^{th}$ factor as circled, uncircled, or both. 

It follows that:
\begin{eqnarray*}
\sum_{(u,v) \in X_{w}} \mathfrak{G}_{u^{-1}}(y) \mathfrak{G}_v(x)=\sum_{\mathfrak{f}\in {\mathfrak{F}}^{\circ}_{w_0}} (x,y)^{wt(\mathfrak{f})}=
\prod_{i+j \leq n+1} (x_i+y_j +x_iy_j)=\mathfrak{G}_{w_0}(x,y).
\end{eqnarray*}
where the first equality comes from combining Lemmas \ref{cts} and \ref{stb}. This completes the base step of induction.  
Now suppose that $w^{-1} w_0$ has at least one inversion.  Choose $s_r$ such that $w s_r$ has more inversions than $w$.  
By definition we have $\pi_{s_r} \mathfrak{G}_{w s_r}(x,y)=\mathfrak{G}_{w}(x,y)$ and so if we can show that
\begin{eqnarray} \label{WTS}
\pi_{s_r} \sum_{(u,v) \in X_{w s_r}} \mathfrak{G}_{u^{-1}}(y)\mathfrak{G}_v(x)= \sum_{(u,v) \in X_{w}} \mathfrak{G}_{u^{-1}}(y)\mathfrak{G}_v(x),
\end{eqnarray}
then since the  inductive hypothesis implies  that the left hand sides of the two former equations are equal it will imply that the right hand sides are also equal.  
First we write $X_{w s_r}=A \cup B \cup C$ where:
\begin{itemize}
\item $A =\{(u,v) \in X_{w s_r}$: no reduced word for $v$ ends in $s_r\}$.
\item $B =\{(u,v) \in X_{w s_r}$: $\exists \nu$ s.t. $\widetilde{v} \sim \widetilde{\nu} \bar{s}_r$, $\widetilde{\nu} \not\sim \widetilde{\nu} \bar{s}_r$, $\widetilde{u} \widetilde{\nu} \sim \widetilde{u} \widetilde{\nu} \bar{s}_r\}$.
\item $C =\{(u,v) \in X_{w s_r}$: $\exists \nu$ s.t. $\widetilde{v} \sim \widetilde{\nu} \bar{s}_r$, $\widetilde{\nu} \not\sim \widetilde{\nu} \bar{s}_r$, $\widetilde{u}\widetilde{\nu} \not\sim \widetilde{u}\widetilde{\nu} \bar{s}_r\}$.
\end{itemize}
(Recall that $\bar{s}_r$ is the idempotent or Hecke version of the simple transposition $s_r$.)

\begin{claim}
There exist bijections
\begin{enumerate}[(i)]
 \item $B  \longrightarrow A$ 
\item $C   \longrightarrow X_{w }  $
\end{enumerate}
\end{claim}

\begin{proof}\renewcommand{\qedsymbol}{$\blacksquare$}
We describe a single procedure and its inverse that actually gives both the bijections.  
First let $(u,v) \in   B$ or $(u,v) \in   C$.  In either case we may write $v=\nu s_r$ for a (unique) permutation $\nu$ with $\widetilde{\nu} \not\sim \widetilde{\nu} \bar{s}_r$.  We now simply send $(u,v)$ to $(u,\nu)$.
  
If $(u,v) \in   B$ then $\widetilde{u} \widetilde{\nu} \sim \widetilde{u}\widetilde{\nu} \bar{s}_r$ so that $(u,\nu) \in X_{w s_r}$.  Moreover,  $\widetilde{\nu} \not\sim \widetilde{\nu} \bar{s}_r$ implies that $\nu$ has no reduced word ending in $s_r$, so $(u,\nu) \in A$.  If $(u,v) \in   C$ then $\widetilde{u} \widetilde{\nu} \not\sim \widetilde{u} \widetilde{\nu} \bar{s}_r$  implies that $(u,\nu) \in X_{w }$.

Next let $(u,v) \in   A$  or $(u,v) \in   X_{w }$.  Then the inverse map sends $(u,v)$ to $(u,vs_r)$.   First, suppose that $(u,v) \in   A$. Then the permutation that the Hecke word $\widetilde{u}\widetilde{v}$ represents is $w s_r$ which obviously has a reduced word ending in $s_r$.  Thus the permutation represented by $\widetilde{u}\widetilde{v}\bar{s}_r$ is also $w s_r$.   Further, in the definition of $ B$, replacing $v$ with $vs_r$ and $\nu$ with $v$ satisfies the three requirements of the definition, $(\widetilde{v}\bar{s}_r) \sim \widetilde{v} \bar{s}_r$ (clearly), $\widetilde{v} \not\sim \widetilde{v} \bar{s}_r$ (because $v$ has no reduced word ending in $s_r$), and $\widetilde{u}\widetilde{v} \sim \widetilde{u}\widetilde{v} \bar{s}_r$ (by the previous sentence).  Thus $(u,vs_r) \in   B$.  On the other hand if $(u,v) \in   X_{w}$ then the fact that $\widetilde{u}\widetilde{v}$ represents the permutation $w$ implies that $\widetilde{u}\widetilde{v}\bar{s}_r$ represents the permutation $w s_r$.   Moreover, in the definition of $ C$, replacing $v$ with $vs_r$ and $\nu$ with $v$ satisfies the three conditions:   $(\widetilde{v}\bar{s}_r) \sim \widetilde{v} \bar{s}_r$ (clearly), $\widetilde{v} \not\sim \widetilde{v} \bar{s}_r$ (because the fact $uv$ has no reduced word ending in $s_r$ implies the same for $v$), and $\widetilde{u}\widetilde{v} \not\sim \widetilde{u}\widetilde{v} \bar{s}_r$ (because $\widetilde{u}\widetilde{v} \sim \widetilde{w}$ and $w$ has no reduced word ending in $s_r$).
Finally, it is clear the given maps are mutual inverses.
\end{proof}

\begin{claim}
\begin{align}
\pi_{s_r} \sum_{(u,v) \in B} \mathfrak{G}_{u^{-1}}(y)\mathfrak{G}_v(x)&&=&& -\pi_{s_r} \sum_{(u,v) \in A} \mathfrak{G}_{u^{-1}}(y)\mathfrak{G}_v(x) \label{one} \\
\pi_{s_r} \sum_{(u,v) \in C} \mathfrak{G}_{u^{-1}}(y)\mathfrak{G}_v(x)&&=&&  \sum_{(u,v) \in X_{w}} \mathfrak{G}_{u^{-1}}(y)\mathfrak{G}_v(x) \label{two}
\end{align}
\end{claim}

\begin{proof}\renewcommand{\qedsymbol}{$\blacksquare$}
To prove \ref{one} it suffices to show if $(u, \nu s_r) \rightarrow (u, \nu)$ under bijection $(i)$ then,
\begin{eqnarray*}
\pi_{s_r} \mathfrak{G}_{\nu s_r}(x)= -\pi_{s_r}\mathfrak{G}_{\nu}(x).
\end{eqnarray*}
To prove \ref{two} it suffices to show if $(u, \nu s_r) \rightarrow (u, \nu)$ under bijection $(ii)$ then,
\begin{eqnarray*}
\pi_{s_r} \mathfrak{G}_{\nu s_r}(x)= \mathfrak{G}_{\nu}(x).
\end{eqnarray*}
In either case if $\mu=\mu_1 \cdots \mu_{\ell}$ is a reduced word for the permutation $(\nu s_r)^{-1}w_0$, then $s_r \mu_1 \cdots \mu_{\ell}$ is a reduced word for $\nu^{-1} w_0$.  So by the divided difference operator definition, the equations become:
\begin{eqnarray*}
\pi_{s_r} (\pi_{\mu_1}\cdots \pi_{\mu_{\ell}})(x_1^{n}\cdots x_{n+1}^0)= -\pi_{s_r} (\pi_{s_r} \pi_{\mu_1}\cdots \pi_{\mu_{\ell}})(x_1^{n}\cdots x_{n+1}^0)
\end{eqnarray*}
\begin{center}
and
\end{center}
\begin{eqnarray*}
\pi_{s_r} (\pi_{\mu_1}\cdots \pi_{\mu_{\ell}})(x_1^{n}\cdots x_{n+1}^0)=  (\pi_{s_r} \pi_{\mu_1}\cdots \pi_{\mu_{\ell}})(x_1^{n}\cdots x_{n+1}^0).
\end{eqnarray*}
The first follows by part (4) of Lemma \ref{relations} and the second is immediate.
\end{proof}

Combining equations \ref{one} and \ref{two} gives equation \ref{WTS} thereby completing the induction step and finishing the proof.
\end{proof}

\section{Stable Grothendieck Polynomials}

In this section we discuss certain limits of Grothendieck polynomials as well as the doubled versions of these limits.  The main result of this section is  a formula for the double stable Grothendieck polynomials in terms of set-valued tableaux.    

We will use the following notation in this section: Let $w \in S_{k+1}$ and choose some $m \geq 0$. $m$ will remain fixed throughout this section.  Let $\overrightarrow{w} \in S_{m+k+1}$ be the permutation of $[1,\ldots,m,(m+1), \ldots, (m+k+1)]$ that fixes the first $m$ entries and applies the permutation $w$ to the last $k+1$ entries.  In other words, $s_{i_1}\cdots s_{i_{\ell}}$ is a reduced word for $w$ if and only if $s_{i_1+m}\cdots s_{i_{\ell}+m}$ is a reduced word for $\overrightarrow{w}$.  Let $\overleftarrow{x}=(x_1, \ldots, x_{m+1})$ and $\overrightarrow{x}=(x_{m+2}, \ldots, x_{m+k+1})$.  Similarly let $\overleftarrow{y}=(y_1, \ldots, y_{m+1})$ and $\overrightarrow{y}=(y_{m+2}, \ldots, y_ {m+k+1})$. Write $x=(\overleftarrow{x},\overrightarrow{x})$ and  $y=(\overleftarrow{y},\overrightarrow{y})$.

\subsection{Single Stable and Single Symmetric Grothendieck polynomials}
We quickly review the situation for single Grothendieck polynomials:

\begin{definition}[\cite{Lascoux90}]
The stable Grothendieck polynomial for $w$ is 
\begin{eqnarray*}
{\mathcal{G}}_{w}(\overleftarrow{x})={\mathfrak{G}}_{\overrightarrow{w}}(x)|_{\overrightarrow{x}=0}.
\end{eqnarray*}
\end{definition}

\begin{lemma}[\cite{Lascoux90}]\label{s}
We have
\begin{eqnarray*}
{\mathcal{G}}_{w}(\overleftarrow{x})=\sum_{\mathfrak{f}\in {\mathcal{F}}_{w}} (\overleftarrow{x})^{wt(\mathfrak{f})}.
\end{eqnarray*}
\end{lemma}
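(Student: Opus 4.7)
The plan is to deduce Lemma \ref{s} from Lascoux's Theorem \ref{sam} applied to the shifted permutation $\hat{\omega} \in S_{m+k+1}$, by analyzing what specialization $\hat{x}=0$ does to the bounded-factorization expansion. Reading the definition with the conventions set up just above, $\mathfrak{G}_{\omega}(x)$ here really means $\mathfrak{G}_{\hat\omega}(x)$, the single Grothendieck polynomial for the shifted permutation, living in $m+k+1$ variables.

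First I would write, via Theorem \ref{sam},
\begin{eqnarray*}
\mathfrak{G}_{\hat\omega}(x) = \sum_{\mathfrak{f} \in \mathfrak{F}_{\hat\omega}} \mathbf{x}^{wt(\mathfrak{f})},
\end{eqnarray*}
where $\mathfrak{F}_{\hat\omega}$ is the set of bounded Hecke factorizations of $\hat\omega$ into $m+k+1$ parts. Setting $\hat{x} = 0$ annihilates every monomial whose weight has a positive entry in any coordinate $i \in \{m+2, \ldots, m+k+1\}$; equivalently, only those factorizations $\mathfrak{f}$ survive whose last $k$ factors are empty.

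Next I would observe that, for such a surviving factorization, the boundedness condition on the first $m+1$ factors is automatic. Indeed, by construction of $\hat\omega$ every Hecke word for $\hat\omega$ uses only letters from $\{m+1, \ldots, m+k\}$, so each letter is trivially $\geq i$ whenever $i \leq m+1$. Therefore the data of a surviving element of $\mathfrak{F}_{\hat\omega}$ is exactly an \emph{arbitrary} (unbounded) Hecke factorization of $\hat\omega$ into $m+1$ parts, using letters in $\{m+1,\ldots,m+k\}$.

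Finally, the index shift $j \mapsto j-m$ sets up a weight-preserving bijection between such factorizations and $\mathcal{F}_{\omega}$ (unbounded Hecke factorizations of $\omega$ into $m+1$ parts), because a word $s_{i_1}\cdots s_{i_\ell}$ represents $\omega$ iff $s_{i_1+m}\cdots s_{i_\ell+m}$ represents $\hat\omega$, and the number of letters in each factor is unchanged. Tracking weights: the surviving monomial $\mathbf{x}^{wt(\mathfrak{f})}$ only involves $\check{x}$, and it equals $(\check{x})^{wt(\mathfrak{f}')}$ where $\mathfrak{f}' \in \mathcal{F}_{\omega}$ is the shifted factorization. Summing yields the claimed formula. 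There is no real obstacle here; the only bookkeeping to double-check is that the bounded condition genuinely becomes vacuous after specialization, which is precisely the content of the middle paragraph.
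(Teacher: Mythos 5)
Your proposal is correct and follows essentially the same route as the paper's proof: apply Theorem \ref{sam} to $\hat\omega$, observe that setting $\hat{x}=0$ leaves exactly the bounded factorizations whose last $k$ factors are empty, and note that the boundedness condition on the first $m+1$ factors is vacuous since every letter of a Hecke word for $\hat\omega$ is at least $m+1$, so the index shift identifies the surviving set with $\mathcal{F}_\omega$. You also correctly read $\mathfrak{G}_\omega(x)$ in the definition as $\mathfrak{G}_{\hat\omega}(x)$, which is the intended meaning.
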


\begin{definition}[\cite{Buch02}]
 The conjugate symmetric Grothendieck polynomial for a partition $\lambda$ is 
\begin{eqnarray*}
 {{G}}_{\lambda}(\overleftarrow{x})=\sum_{T \in CSVT(\lambda)} (\overleftarrow{x})^{wt(T)}.
\end{eqnarray*}
The conjugate symmetric Grothendieck polynomial is defined for skew shapes similarly.  
\end{definition}

\begin{proposition}[\cite{BKSTY08}]\label{x}
We have
\begin{eqnarray*}
 {\mathcal{G}}_{w}(\overleftarrow{x})=
\sum_{\lambda} \sum_{T \in HT_{w}(\lambda)} G_{\lambda}(\overleftarrow{x}).
\end{eqnarray*}
\begin{center}
\end{center}
\end{proposition}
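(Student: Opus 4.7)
The plan is to combine Lemma \ref{s} with the bijection of Lemma \ref{SSbij} in a single re-indexing of the sum. By Lemma \ref{s}, the stable Grothendieck polynomial can be written as
\begin{equation*}
\mathcal{G}_{\omega}(\check{x}) = \sum_{\mathfrak{f} \in \mathcal{F}_{\omega}} \check{x}^{wt(\mathfrak{f})},
\end{equation*}
where the sum is over (unbounded) Hecke factorizations of $\omega$ into $m+1$ parts, and the weight records the sizes of these parts.

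Next I would apply semistandard Hecke insertion. By Lemma \ref{SSbij}, this gives a weight-preserving bijection between $\mathcal{F}_{\omega}$ and pairs $(P,Q)$ of the same shape $\lambda$, with $P \in HT_{\omega}(\lambda)$ and $Q \in SVT(\lambda)$ (with entries from $\{1,\ldots,m+1\}$). Under this bijection, the monomial $\check{x}^{wt(\mathfrak{f})}$ becomes $\check{x}^{wt(Q)}$, since the $i$-th coordinate of $wt(\mathfrak{f})$ is the count of letters in the $i$-th factor of $\mathfrak{f}$, which is identified with the count of $i$'s in $Q$. Re-indexing the sum by $\lambda$, then by $P$, then by $Q$ yields
\begin{equation*}
\mathcal{G}_{\omega}(\check{x}) = \sum_{\lambda} \sum_{P \in HT_{\omega}(\lambda)} \sum_{Q \in SVT(\lambda)} \check{x}^{wt(Q)}.
\end{equation*}

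Finally, the innermost sum over $Q$ is by definition $G_{\lambda}(\check{x})$, and pulling it out of the $P$-sum (since it depends only on $\lambda$) gives the desired formula
\begin{equation*}
\mathcal{G}_{\omega}(\check{x}) = \sum_{\lambda} \sum_{P \in HT_{\omega}(\lambda)} G_{\lambda}(\check{x}).
\end{equation*}
There is essentially no obstacle here, as the heavy lifting is done by Lemma \ref{SSbij}; the only minor subtlety is verifying that the range of entries allowed in $Q$ matches the number of variables in $\check{x}$, but this is immediate from the fact that $\mathcal{F}_{\omega}$ is defined with $m+1$ factors and $\check{x}$ has $m+1$ variables.
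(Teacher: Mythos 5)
Your proposal is correct and is exactly the argument the paper intends: the paper's proof simply states that the result follows immediately from Lemma \ref{SSbij}, and you have filled in the routine re-indexing (via Lemma \ref{s} and the weight-preserving bijection) that makes this explicit.
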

\begin{proof}
This follows immediately from Lemma \ref{SSbij}.
\end{proof}

\subsection{Stable Double Grothendieck Polynomials}

\begin{definition}[\cite{Lascoux90}]\label{dff}
The stable double Grothendieck polynomial for $w$ is given by: 
\begin{eqnarray*}
 {\mathcal{G}}_{w}(\overleftarrow{x},\overleftarrow{y})=\mathfrak{G}_{\overrightarrow{w}}(x,y)|_{\overrightarrow{x}=0=\overrightarrow{y}}.
\end{eqnarray*}
\end{definition}
\begin{proposition}\label{squ}
\begin{eqnarray*}
 {\mathcal{G}}_{w}(\overleftarrow{x},\overleftarrow{y})=\sum_{\mathfrak{f}\in {\mathcal{F}}^{\square}_{w}} (\overleftarrow{x},\overleftarrow{y})^{wt(\mathfrak{f})}
\end{eqnarray*}
\end{proposition}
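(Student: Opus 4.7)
The plan is to apply the second formula of Theorem~\ref{Cauch} to $\mathfrak{G}_{\hat{\omega}}(x,y)$ and then track which bounded double Hecke factorizations survive after the specialization $\hat{x} = 0 = \hat{y}$. This is the double-variable analog of Lemma~\ref{s}, and the argument should parallel that one closely.

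By Theorem~\ref{Cauch}, the polynomial $\mathfrak{G}_{\hat{\omega}}(x,y)$ equals the sum of $(x,y)^{wt(\mathfrak{f})}$ over $\mathfrak{f} \in \mathfrak{F}^{\square}_{\hat{\omega}}$. The specialization $\hat{x} = 0 = \hat{y}$ kills every summand whose weight has a positive $x_i$ or $y_i$ exponent for some $i \geq m+2$. By the very definition of the $x$- and $y$-weights of a bounded double Hecke factorization, the surviving factorizations are exactly those in which, for each $i \in \{m+2, \ldots, m+k+1\}$, both the $i^{th}$ factor to the right of center and the $i^{th}$ factor to the left of center are empty.

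The key observation is that every Hecke word for $\hat{\omega}$ uses only transpositions with indices in $\{m+1, \ldots, m+k\}$. Thus, for any surviving $\mathfrak{f}$, the potentially non-empty factors --- the first $m+1$ on each side of center --- contain only elements that are automatically $\geq m+1 \geq i$ for every $i \leq m+1$. The boundedness condition on these factors is therefore vacuous, and on the later factors it is trivially satisfied since they are empty. Shifting indices via $\bar{s}_i \mapsto \bar{s}_{i-m}$ then yields a weight-preserving bijection between the surviving elements of $\mathfrak{F}^{\square}_{\hat{\omega}}$ and $\mathcal{F}^{\square}_{\omega}$, under which the nonzero coordinates of $wt(\mathfrak{f})$ become the weight of the associated element of $\mathcal{F}^{\square}_{\omega}$.

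The only delicate point --- hardly an obstacle --- is confirming that the boundedness conditions under the index shift match the total absence of such conditions in $\mathcal{F}^{\square}_{\omega}$: no factorization of $\mathcal{F}^{\square}_{\omega}$ is excluded, because boundedness on the first $m+1$ factors becomes automatic after shifting, and no spurious factorizations are included, because the $\hat{x} = 0 = \hat{y}$ specialization forces the later factors to be empty. Once this bookkeeping is verified, the proposition falls out as a direct corollary of Theorem~\ref{Cauch}.
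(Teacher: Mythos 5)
Your argument is correct and tracks the paper's own proof step for step: apply the $\mathfrak{G}^{\square}$ formula from Theorem~\ref{Cauch} to $\hat{\omega}$, observe that the specialization $\hat{x}=0=\hat{y}$ restricts to the subset where all but the middle $2m+2$ factors are empty, note that boundedness on those middle factors is vacuous because every Hecke word for $\hat{\omega}$ uses only indices $\geq m+1$, and conclude via the index shift $\bar{s}_{i+m}\mapsto \bar{s}_i$. This is exactly the paper's argument, phrased slightly more explicitly.
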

\begin{proof}
We have that
\begin{eqnarray*}
 \mathfrak{G}_{\overrightarrow{w}}(x,y)= \mathfrak{G}_{\overrightarrow{w}}^{\square}(x,y)=\sum_{\mathfrak{f}\in {\mathfrak{F}}^{\square}_{\overrightarrow{w}}} (x,y)^{wt(\mathfrak{f})}.
\end{eqnarray*}
Therefore we have that 
\begin{eqnarray*}
 \mathfrak{G}_{\overrightarrow{w}}(x,y)|_{\overrightarrow{x}=0=\overrightarrow{y}}=\sum_{\mathfrak{f}\in {\mathfrak{F}}^{\square}_{\overrightarrow{w}}(2m+2)} (\overleftarrow{x},\overleftarrow{y})^{wt(\mathfrak{f})}
\end{eqnarray*}
where ${\mathfrak{F}}^{\square}_{\overrightarrow{w}}(2m+2)$ is the subset of ${\mathfrak{F}}^{\square}_{\overrightarrow{w}}$ where all but the middle $2m+2$ factors are empty.  But every Hecke word for $\overrightarrow{w}$ only contains elements from the set of $\{(m+1),\ldots,(m+k)\}$ and the boundedness condition on the central $2m+2$ factors of a factorization of ${\mathfrak{F}}^{\square}_{\overrightarrow{w}}(2m+2)$ only requires elements to be greater $\geq i$ for some $i \leq m+1$. Therefore no factorization of ${\mathcal{F}}^{\square}_{w}$   fails to lie inside of  ${\mathfrak{F}}^{\square}_{\overrightarrow{w}}(2m+2)$  (after changing each $s_i$ to $s_{i+m}$), that is, ${\mathcal{F}}^{\square}_{w}={\mathfrak{F}}^{\square}_{\overrightarrow{w}}(2m+2)$   (after changing each $s_i$ to $s_{i+m}$).
\end{proof}

\subsection{Balanced Double Grothendieck Polynomials}
\begin{definition}\label{PSVT}
Consider the orderdered alphabet $\{1<2<\cdots<m+1<1'<2'<\cdots<(m+1)'\}$.  A
 \emph{primed set valued tableau}  of shape $\lambda$,  or an element of $PSVT(\lambda)$,
 is a filling of a Young diagram of shape $\lambda$ such that each box is nonempty and contains a set from this alphabet such that 
\begin{itemize}
\item All of the entries in a box are less than or equal to  all of the entries in the box to its right.
\item All of the entries in a box are less than or equal to  all of the entries in the box below it.
\item $i$ appears in at most one box in each column.
\item $i'$ appears in at most one box in each row.
\end{itemize} 
\end{definition}
The $x$-weight of such a tableau is the vector whose $i^{th}$ coordinate records the number of times $i'$ appears in the tableau. The $y$-weight is the vector whose $i^{th}$ coordinate records the number of times $i$ appears in the tableau.  

\begin{example} The following is a $PSVT$ with  $x$-weight $(3,3,2,2)$   and $y$-weight $(1,2,2,0)$
\begin{eqnarray*}
\Yboxdim{20pt}\young({{12}}{{23}}{{1'2'}},{{31'}}{{2'3'}}{{4'}},{{1'2'}}{{3'4'}}) 
\end{eqnarray*}
\end{example}
\begin{definition}
 We define the balanced double Grothendieck polynomial\footnote{Although, as we will see the stable double Grothendiecks will expand in terms of these polynomials, it should be noted that this does not imply that they are stable limits themselves (in general, they are not instantiations of stable double Grothendieck polynomials indexed by Grassmannian permutations).}:
\begin{eqnarray*}
G_{\lambda}(\overleftarrow{x},\overleftarrow{y})=\sum_{T \in PSVT(\lambda)} (\overleftarrow{x},\overleftarrow{y})^{wt(T)}
\end{eqnarray*}
\end{definition}

\subsection{Relationship between Stable Double Grothendieck  Polynomials and Balanced Double Grothendieck  Polynomials}
 We are interested now in the relationship between $\mathcal{G}_{w}(\overleftarrow{x},\overleftarrow{y})$ and $G_{\lambda}(\overleftarrow{x},\overleftarrow{y})$.
\begin{proposition}\label{flipn}
 There is an $x$-weight and $y$-weight preserving bijection from $\mathcal{F}^{\square}_{w}$ to pairs $(P,Q)$ where $P \in HT_{w}$ and $Q \in PSVT$ have the same shape.
\end{proposition}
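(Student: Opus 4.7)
The plan is to produce a weight-preserving bijection between $\mathcal{F}^{\square}_\omega$ and pairs $(P, Q)$ with $P \in HT_\omega$ and $Q \in PSVT$ of the same shape. Given $\mathfrak{f} = \mathfrak{f}_L \mid \mathfrak{f}_R \in \mathcal{F}^{\square}_\omega$, with $\mathfrak{f}_L$ consisting of $m+1$ strictly increasing factors and $\mathfrak{f}_R$ of $m+1$ strictly decreasing factors, the underlying concatenated Hecke word represents $\omega$. The weights to be matched are: the number of unprimed $i$'s in $Q$ should equal the size of the $i$-th factor of $\mathfrak{f}_R$ ($x$-weight), and the number of primed $i'$'s should equal the size of the $i$-th factor of $\mathfrak{f}_L$ ($y$-weight), under the PSVT alphabet $1' < \cdots < (m+1)' < 1 < \cdots < (m+1)$.

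The first natural attempt is to apply plain Hecke row insertion (Proposition \ref{Hebij}) to the underlying concatenated word and record each step by $i'$ or $i$ according to which half and factor the inserted letter belongs to. Under this attempt, the vertical-strip condition on unprimed labels — no two copies of $i$ in a common row — follows immediately from Lemma \ref{combined}, since consecutive insertions inside a decreasing right-half factor are forced into strictly lower rows, and the weak row/column monotonicity of $Q$ in the combined alphabet is inherited from the SVT structure produced by semistandard Hecke insertion. However, as Remark \ref{pita} highlights, the symmetric horizontal-strip condition on primed labels — no two copies of $i'$ in a common column — fails in general: two consecutive letters from a single increasing left-half factor can be recorded in the same column or even in the same box. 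This is the main obstacle.

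To overcome it I would replace row insertion by its natural column-insertion analog on the letters of $\mathfrak{f}_L$: for an increasing factor, column insertion places consecutive recording positions into strictly-left columns, forcing the primed labels to form a horizontal strip, and no collisions in a single box can occur. The two insertion procedures then must be reconciled so that the combined insertion tableau is a single Hecke tableau $P$ for $\omega$ (rather than separate tableaux for $u$ and $v$, where $\widetilde{u}\widetilde{v}$ represents $\omega$), which requires either a careful interleaving of the row and column insertions or, equivalently, an argument that the two insertion orders commute on the product so that the composite insertion tableau built from $\mathfrak{f}_L$ (column-inserted) followed by $\mathfrak{f}_R$ (row-inserted) lies in $HT_\omega$. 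Verifying that the combined recording tableau $Q$ simultaneously satisfies all four $PSVT$ conditions — the two strip conditions and weak row/column monotonicity across primed and unprimed entries — reduces to a local case analysis of how an unprimed row insertion interacts with the existing primed column-insertion record, and this is the additional work that the introduction alludes to as comprising the majority of section 4.

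The inverse bijection is then defined by reverse Hecke insertion, using the strip conditions on $Q$ to identify unique unbumping targets: one first undoes the row insertions coming from the unprimed labels $(m+1), m, \ldots, 1$, traversing each label's vertical strip from bottom to top (matching the fact that within a decreasing factor the last-inserted letter is smallest and therefore recorded in the lowest row), thereby recovering $\mathfrak{f}_R$; one then undoes the column insertions coming from the primed labels $(m+1)', m', \ldots, 1'$, traversing each horizontal strip from right to left, recovering $\mathfrak{f}_L$. The strip conditions together with the column-strict structure of the Hecke tableau $P$ guarantee that at each reverse step the unbumping target is uniquely determined, so this process is genuinely inverse to the forward map.
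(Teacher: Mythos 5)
You correctly identify the obstacle (Remark \ref{pita}: increasing runs are not recorded as horizontal strips under Hecke row insertion), but your proposed fix has a genuine gap. You invoke a ``natural column-insertion analog'' of Hecke insertion for the left-half letters and then an interleaving or commutation of row and column Hecke insertion. Neither of these objects exists in the paper or is supplied by you: for Hecke insertion (unlike RSK) a column version with the required properties --- bijectivity onto Hecke tableaux paired with set-valued recording tableaux, an analog of Lemma \ref{combined} guaranteeing the horizontal-strip condition, and compatibility with the Hecke relations so that the composite insertion tableau still represents $\omega$ --- would each have to be proved from scratch, and the commutation of row and column bumping is exactly the kind of statement that fails naively in the $0$-Hecke setting (the same degeneracies that cause Remark \ref{pita}). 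Asserting these properties is essentially assuming a solution to the open problem the paper states about amending Hecke insertion; the ``local case analysis'' you defer to is where the proof actually lives, and it is not clear it can be made to work.

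The paper's resolution avoids defining any new insertion. It reverses the left half $\mathfrak{f}_\ell$ (turning the increasing factors into decreasing ones, i.e.\ a bounded Hecke factorization of $\mu^{-1}$), applies the \emph{existing} semistandard Hecke insertion to get $(P_\ell,Q_\ell)$ --- so Lemma \ref{combined} yields vertical strips --- and then \emph{transposes} both tableaux and primes $Q_\ell$, converting the vertical strips into the required horizontal strips. It then resumes ordinary row Hecke insertion on $\mathfrak{f}_r$ starting from $(P_\ell^t,Q_\ell^{t'})$. The one new ingredient this requires is the claim that the row and column reading words of a Hecke tableau represent the same permutation (so that $P_\ell^t$ still encodes $\mu$ and the final $P$ lies in $HT_\omega$), which is proved by a short commutation argument. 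Your proposal is missing this reverse--insert--transpose idea, and without it the construction of the forward map, and hence of the inverse, does not go through.
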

\begin{proof}

Let $\mathfrak{f} \in {\mathcal{F}}_{w}^{\square}$  and let $\mathfrak{f}_{\ell}$ represent the leftmost $m+1$ factors of $\mathfrak{f}$ and $\mathfrak{f}_r$ represent the rightmost $m+1$ factors of $\mathfrak{f}$.  Suppose that $\mathfrak{f}_{\ell}$ represents the permutation $\mu$ and denote by $\overleftrightarrow{\mathfrak{f}_{\ell}}$ the factorization given by reversing the order of the factors of $\mathfrak{f}_{\ell}$ and reversing the order of the letters within each factor. Note that if $\mathfrak{f}_{\ell}$ is  a Hecke factorization for $\mu$ then   $\overleftrightarrow{\mathfrak{f}_{\ell}}$  is  a  Hecke factorization of  $\mu^{-1}$.  Apply semistandard Hecke insertion to $\overleftrightarrow{\mathfrak{f}_{\ell}}$ to obtain a pair $(P_{\ell},Q_{\ell})$ where $P_{\ell} \in HT_{\mu^{-1}}(\lambda_{\ell})$ and $Q_{\ell} \in CSVT(\lambda_{\ell})$ for some $\lambda_{\ell}$.  Now transpose both tableaux to get a pair $(P_{\ell}^t,Q_{\ell}^{t})$ of shape $\lambda_{\ell}^t$.  Now, proceed with semistandard Hecke insertion as if the current insertion tableau were $P_{\ell}^t$ and the current recording tableau were $Q_{\ell}^{t}$ and exactly the factors of $\mathfrak{f}_r$ remained to be inserted.  The only ambiguity to starting in the middle of Hecke insertion like this is not knowing what entry to add to the recording tableau during insertion of the $i^{th}$ factor of $\mathfrak{f}_r$: Use the entry $i'$.  Denote the final insertion tableau and recording  tableau as $P$ and $Q$ respectively.  We can now define the bijection: $\Phi(\mathfrak{f})=(P,Q)$.  
\begin{example}
Let $\mathfrak{f}=(124)(13)|(432)(3) \in {\mathcal{F}}_{w}^{\square}$.  we have $\mathfrak{f}_{\ell}=(124)(13)$ and $\mathfrak{f}_r=(432)(3)$. First apply semistandard Hecke insertion to $\overleftrightarrow{\mathfrak{f}_{\ell}}=(31)(421)$ to find that 
\begin{eqnarray*}
P_{\ell}=\young(12,24,3), \,\,\,\,\,\, Q_{\ell}=\young(12,12,2), \,\,\,\,\,\, P_{\ell}^t=\young(123,24), \,\,\,\,\,\, Q_{\ell}^{t}=\young({{1}}{{1}}{{2}},{{2}}{{2}})
\end{eqnarray*}
Now apply semistandard Hecke insertion of $\mathfrak{f}_r=(432)(3)$ to the starting pair $(P_{\ell}^t, Q_{\ell}^{t'})$
\begin{eqnarray*}
P=P_{\ell}^t \leftarrow (432)(3)=\young(123,24) \leftarrow (432)(3)= \young(1234,234,4)\\
Q=\young({{1}}{{1}}{{2}}{{1'}},{{2}}{{21'}}{{2'}},{{1'}})
\end{eqnarray*}
\end{example}
There is much to prove:
\begin{itemize}
\item $P$ is a Hecke tableau and it represents the permutation $w$:  Suppose $\mathfrak{f}_{\ell}$ is a Hecke word for some permutation   $\mu$. Now, $P_{\ell}$ was formed by applying Hecke insertion to $\overleftrightarrow{\mathfrak{f}_{\ell}}$   and so is a Hecke tableau whose rows read left to right, from bottom row to top row form a Hecke word for $\mu^{-1}$.  Since the only requirement for being a Hecke tableau is that the rows and columns are strictly increasing, (which is clearly preserved under transposition) it is also true that $P_{\ell}^t$ is a Hecke tableau.  Next, the columns of $P_{\ell}^t$ read from top to bottom from rightmost column to leftmost column give a Hecke word for $\mu^{-1}$.  Therefore the columns of $P_{\ell}^t$ read from bottom to top from leftmost column to rightmost column give a Hecke word for $\mu$.  However:
\begin{claim}
The column reading word and row reading word of a Hecke tableau, $H$, represent the same permutation.
\end{claim}
\begin{proof}
Let $w_k(H)$ be the permutation represented by reading the leftmost $k$ columns of $H$ bottom to top, leftmost column  to rightmost column and then, ignoring the first $k$ columns of $H$, reading rows left to right, bottom row to top row.  It suffices to show that $w_k(H)=w_{k+1}(H)$. Without loss of generality we may assume $k=0$.  Now let $w^j(H)$ be the permutation represented by reading the lowest $j$ entries of the leftmost column of $H$ from bottom to top and then reading the remaining entries of $H$ by rows, left to right, bottom to top.  To show that  $w_k(H)=w_{k+1}(H)$ for $k=0$ it suffices to show that $w^j(H)=w^{j+1}(H)$. If $a$ is the entry in the leftmost column of $H$ in the ${j+1}^{st}$ row from the bottom and $b$ is any entry in $H$ in the ${j}^{th}$ row from the bottom or lower not in the first column of $H$ then $a<b-1$.  Therefore $a$ commutes with all such $b$ which shows that $w^j(H)=w^{j+1}(H)$.
\end{proof}
Therefore reading the rows of  $P_{\ell}^t$ left to right, bottom to top also gives a Hecke word for $\mu$.  Since $\mathfrak{f}_r$ gives a Hecke word for some permutation $\nu$ such that $\widetilde{\mu}\widetilde{\nu} \sim \overrightarrow{w}$  the properties of Hecke insertion imply that the Hecke word formed by reading the rows of $P$ from left to right, bottom to top also represents $w$.  All this shows that $P$ is a Hecke tableau and it represents the permutation $w$.

\item $Q \in PSVT(\lambda)$ where $\lambda$ is the shape of $P$:  First, $Q_{\ell} \in CSVT(\lambda_{\ell})$ by Lemma \ref{SSbij} so it follows that $Q_{\ell}^{t} \in PSVT(\lambda_{\ell}^t)$ (and has no primed entries).  On the other hand it also follows from Lemma \ref{SSbij} that the primed entries from $Q$ will give (ignoring their primes) an element of $CSVT(\lambda/\rho)$ for some $\rho \subseteq \lambda_{\ell}^t$ such that $\lambda_{\ell}^t \setminus \rho$ contains no more than one box in any row or column.  The fact that the primed and unprimed entries give such tableaux along with the fact that $i < j'$ for any $i$ and $j$ imply that $Q \in PSVT(\lambda)$.
\item $\Phi$ is injective.  Let $\mathfrak{f},\mathfrak{f}^{\times} \in \mathcal{F}^{\square}_{w}$. Suppose that $\Phi(\mathfrak{f})=\Phi(\mathfrak{f}^{\times})$ with $\mathfrak{f} \neq \mathfrak{f}^{\times}$.  We use the same notation as in the construction of $\Phi(\mathfrak{f})$ and also set $\mathfrak{p}_{\ell}$ equal to the Hecke factorization given by reading the columns of $P_{\ell}^t$ bottom to top from left column to right column.    Use the same notation for corresponding objects associated to $\mathfrak{f}^{\times}$ but with a ${\times}$.  

If $\mathfrak{f}_{\ell} \neq \mathfrak{f}_{\ell}^{\times}$ then by Lemma \ref {SSbij} $(P_{\ell},Q_{\ell}) \neq (P_{\ell}^{\times},Q_{\ell}^{\times})$.  But $Q_{\ell} \neq Q_{\ell}^{\times}$ would force $Q \neq Q^{\times}$ so we must have $P_{\ell}\neq P_{\ell}^{\times}$ and so $P_{\ell}^t \neq (P_{\ell}^{\times})^t$.  Thus either $\mathfrak{f}_{\ell} \neq \mathfrak{f}_{\ell}^{\times}$  in which case  $\mathfrak{p}_{\ell} \neq \mathfrak{p}_{\ell}^{\times}$  or else $\mathfrak{f}_{r} \neq \mathfrak{f}_{r}^{\times}$. Either way,  $\mathfrak{p}_{\ell} \mathfrak{f}_r \neq \mathfrak{p}_{\ell}^{\times} \mathfrak{f}_r^{\times}$.  But it is easy to see that the insertion tableau of $\mathfrak{p}_{\ell}$ is just $P_{\ell}^t$ and the insertion tableau of   $\mathfrak{p}_{\ell}^{\times}$ is just ${P_{\ell}^t}^{\times}$. Meanwhile the recording tableaux of $\mathfrak{p}_{\ell}$ and $\mathfrak{p}_{\ell}^{\times}$ are the same.  Thus the Hecke factorizations $\mathfrak{p}_{\ell} \mathfrak{f}_r$ and  $\mathfrak{p}_{\ell}^{\times} \mathfrak{f}_r^{\times}$ would be two distinct elements mapping to the same insertion and recording tableaux under the bijection of Lemma \ref{SSbij} which is a contradiction.
\item $\Phi$ is surjective. Suppose we are given $(P,Q)$ of the same  shape $\lambda$ where $P \in HT_{w}$ and $Q \in PSVT$.  Let $Q_{out}$ denote the skew tableau formed by only taking the primed entries  of $Q$ and unpriming them. Let $Q_{in}$ denote the tableau formed by taking only the unprimed entries of $Q$.  Take $j$ sufficiently large, (for example more than the number of unprimed entries in $Q$) and  let $Q_{can}$ be any $CSVT$ such that erasing all integers less than or equal to $j$ and subtracting $j$ from the rest gives $Q_{out}$ and such that removing all entries greater than $j$ gives a tableau of the same shape as $Q_{in}$.

 Now use Lemma \ref{SSbij} to find a Hecke factorization $\widetilde{\mathfrak{f}}$ mapping to $(P,Q_{can})$.  Write  $\widetilde{\mathfrak{f}}=\widetilde{\mathfrak{f}_{\ell}}\mathfrak{f}_r$ where $\widetilde{\mathfrak{f}_{\ell}}$ represents the first $j$ factors of $\widetilde{\mathfrak{f}}$.  Suppose the insertion tableau of  $\widetilde{\mathfrak{f}_{\ell}}$ is $T$.  Use Lemma \ref{SSbij} to find a Hecke factorization $ \mathfrak{f}_{\ell}$ mapping to $(T^t,Q_{in}^t)$.  Let $\overleftrightarrow{\mathfrak{f}_{\ell}}$ represent the result of reversing the order of the factors of $\mathfrak{f}_{\ell}$ and reversing the order of the entries within each factor.  Then we have that $\Phi(\overleftrightarrow{\mathfrak{f}_{\ell}}\mathfrak{f}_{r})=(P,Q)$.  Now $\overleftrightarrow{\mathfrak{f}_{\ell}}\mathfrak{f}_{r} \in {\mathcal{F}}_{w'}^{\square}$ for some $w'$ just by construction.  But by the first bullet point we have $w'=w$.
\item $\Phi$ preserves the $x$-weight and the $y$-weight:  Suppose $\Phi(\mathfrak{f})=(P,Q)$ where $\mathfrak{f}=\mathfrak{f}_{\ell}\mathfrak{f}_r$.  The $y$-weight of $\mathfrak{f}$ is the vector whose $i^{th}$ coordinate records the number of entries in the $i^{th}$ factor of $\overleftrightarrow{\mathfrak{f}_{\ell}}$ which is the number of times $i$ appears in $Q_{\ell}$ or equivalently  in $Q$.  This is the definition of the $y$-weight of $Q$. The $x$-weight of $\mathfrak{f}$ is the vector whose $i^{th}$ coordinate records the number of entries in the $i^{th}$ factor of $\mathfrak{f}_r$ which is the number of times $i'$ appears in $Q$. This is the definition of the $x$-weight of $Q$. 
\end{itemize}
\end{proof}
\begin{remark}
If it were not for the unfortunate fact mentioned in Remark \ref{pita} the whole process of reversing the left side of the factorization and then inserting and then transposing would not be necessary and the proposition could be proved through just inserting the factors directly.  We leave it as an open problem to find a way of altering Hecke insertion so it has the additional properties needed for this simpler proof.
\end{remark}

\begin{corollary}\label{tab}
Given a partition $\mu$ we say that $\rho \sdot \mu$ if $\rho \subseteq \mu$ and $\mu/\rho$ contains no two boxes in the same row and no two boxes in the same column.  For a tableau, $T$, let $T_s$ denote the shape of $T$.  We have:
\begin{eqnarray} \label{tabf}
 {\mathcal{G}}_{w}(\overleftarrow{x},\overleftarrow{y})=
 \sum_{T \in HT_{w}} \sum_{\rho \sdot \mu \subseteq T_s} G_{T_s/\rho}(\overleftarrow{x})G_{\mu'}(\overleftarrow{y}).
\end{eqnarray}
\end{corollary}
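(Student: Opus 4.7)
The plan is to chain together Propositions \ref{squ} and \ref{flipn} with a direct combinatorial decomposition of primed set-valued tableaux according to the primed/unprimed interface. By Proposition \ref{squ}, $\mathcal{G}_{\omega}(\check{x},\check{y}) = \sum_{\mathfrak{f}\in \mathcal{F}^{\square}_{\omega}} (\check{x},\check{y})^{wt(\mathfrak{f})}$, and by Proposition \ref{flipn} this equals
\begin{eqnarray*}
\sum_{T \in HT_{\omega}}\ \sum_{Q \in PSVT(T_s)} (\check{x},\check{y})^{wt(Q)}.
\end{eqnarray*}
So it suffices to prove, for each fixed partition $\lambda$, the identity
\begin{eqnarray*}
\sum_{Q \in PSVT(\lambda)} (\check{x},\check{y})^{wt(Q)} = \sum_{\rho\, \sdot\, \mu \subseteq \lambda} G_{\lambda/\rho}(\check{x})\,G_{\mu'}(\check{y}).
\end{eqnarray*}

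I would establish this by a weight-preserving bijection sending $Q \in PSVT(\lambda)$ to a triple $(\rho,\mu,(U,V))$ with $\rho\,\sdot\,\mu \subseteq \lambda$, $U \in SVT(\lambda/\rho)$, and $V \in SVT(\mu')$. Concretely, let $\mu$ be the set of boxes of $Q$ containing at least one primed entry and $\rho$ the set of boxes containing \emph{only} primed entries; take $U$ to be the filling of $\lambda/\rho$ by the unprimed entries of $Q$, and $V$ to be the transpose of the filling of $\mu$ by the primed entries (with primes erased). The first thing to check is that $\rho$ and $\mu$ are honest Young diagrams and that $\mu/\rho$ is both a horizontal and a vertical strip (i.e.\ $\rho\,\sdot\,\mu$). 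Both facts exploit the crucial feature of the alphabet that every primed letter is strictly less than every unprimed letter: the weak inequality between a box and its right / bottom neighbour forces that if a box has a (only) primed entry then so do all boxes above and to its left, and two mixed boxes in a common row or column would force a primed entry to dominate an unprimed one. The $SVT$ inequalities for $U$ are inherited directly from those of $Q$; for $V$, the column-strict condition ``$i'$ appears in at most one box per column'' becomes the row-strict condition of an $SVT$ after transposition, while the weak $\leq$ in rows of $\mu$ becomes the weak $\leq$ in columns of $\mu'$. Finally $wt_{\check{x}}(Q)=wt(U)$ and $wt_{\check{y}}(Q)=wt(V)$ by construction, since erasing primes turns each $i'$ of $Q$ into an $i$ of $V$.

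For the inverse, given $(\rho,\mu,U,V)$ with $\rho\,\sdot\,\mu \subseteq \lambda$, overlay the entries of $U$ on $\lambda/\rho$ and the re-primed, un-transposed entries of $V$ on $\mu$, so that exactly the boxes of $\mu/\rho$ carry both primed and unprimed entries. The main obstacle I anticipate is verifying that this overlay really satisfies all the $PSVT$ inequalities at the $\mu$-boundary, between a purely primed box of $\rho$, a mixed box of $\mu/\rho$, and a purely unprimed box of $\lambda\setminus\mu$, together with the ``at most one $i$ per row / $i'$ per column'' conditions where both parts meet. The hypothesis $\rho\,\sdot\,\mu$ is doing the heavy lifting here: it guarantees that each mixed box has a purely primed neighbour only immediately above or to the left and a purely unprimed neighbour only immediately below or to the right, so every primed-vs-unprimed comparison across the boundary is automatic from the order primed $<$ unprimed, and no row or column contains two mixed boxes that could violate the row / column multiplicity constraints on a single letter. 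Once this case analysis is in place the two constructions are manifestly inverse, the bijection is weight-preserving, and the corollary follows.
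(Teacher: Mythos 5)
Your proposal is correct and follows essentially the same route as the paper: reduce via Propositions \ref{squ} and \ref{flipn} to $\sum_{T\in HT_{\omega}}G_{T_s}(\check{x},\check{y})$, then split each $PSVT(\lambda)$ into its unprimed part (a skew $SVT$ of shape $\lambda/\rho$) and its transposed primed part (a straight $SVT$ of shape $\mu'$), with $\mu/\rho$ the rook strip of mixed boxes. The paper merely asserts this ``canonical bijection''; you have supplied the verification it leaves to the reader, and your checks (using that every primed letter is below every unprimed one, and the one-per-row / one-per-column conditions) are the right ones.
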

\begin{proof}
It follows from proposition \ref{flipn} that we have:
\begin{eqnarray*}
 {\mathcal{G}}_{w}(\overleftarrow{x},\overleftarrow{y})=
\sum_{T \in HT_{w}} G_{T_s}(\overleftarrow{x},\overleftarrow{y}).
\end{eqnarray*}
Next, note that there is a canonical bijection from $PSVT(\lambda)$ to pairs of tableaux $(P,Q)$ where $P$ is a skew $CSVT$ and $Q^t$ is a straight shape $CSVT$ such that $P_s \cap Q_s$ contains no two boxes in the same row or column and where $P_s \cup Q_s =\lambda$.  Since the bijection sends $x$-weight to the weight of $P$ and $y$-weight to the weight of $Q$ the theorem follows from the formula above.  
\end{proof}

\subsection{Double Grothendieck functions}

We will now be interested in Grothendieck polynomials over infinite set(s) of variables.  To distinguish when we are talking about such polynomials will refer to them as functions from now on.  Let $\mathbf{x}=(x_1,x_2,\ldots)$ and  $\mathbf{y}=(y_1,y_2,\ldots)$ be  infinite lists of variables. Let ${\Omega}_{\mathbf{x}}$ be the $\mathbb{Z}[\mathbf{y}]$ linear involution on functions symmetric with respect to $\mathbf{x}$ in $\mathbb{Z}[\mathbf{x},\mathbf{y}]$ which sends $s_{\lambda}(\mathbf{x}) \rightarrow s_{\lambda'}(\mathbf{x})$.  Let ${\Omega}_{\mathbf{y}}$ be the $\mathbb{Z}[\mathbf{x}]$ linear involution on functions symmetric with respect to  $\mathbf{y}$ in $\mathbb{Z}[\mathbf{x},\mathbf{y}]$ that sends $s_{\lambda}(\mathbf{y}) \rightarrow s_{\lambda'}(\mathbf{y})$.  

In what follows  $\overline{CSVT(\lambda)}$, $\overline{\mathcal{F}_{w}}$,  $\overline{PSVT(\lambda)}$, and $ \overline{\mathcal{F}^{\square}_{w}}$ refer to the sets obtained by altering the definitions  of ${CSVT(\lambda)}$, ${\mathcal{F}_{w}}$,  ${PSVT(\lambda)}$, and ${\mathcal{F}^{\square}_{w}}$ respectively by replacing the requirement that entries come from the set $\{ 1,2,\ldots, m,m+1\}$ (and possibly their primed versions) with the requirement that the entries come from the set of positive integers (and possibly their primed versions).

\begin{definition}\cite{LS82}
Define the  conjugate symmetric Grothendieck function and the stable Grothendieck function, respectively, by:
\begin{eqnarray*}
 {{G}}_{\lambda}(\mathbf{x})=\sum_{T \in \overline{CSVT(\lambda)}} (\mathbf{x})^{wt(T)}\\
{\mathcal{G}}_{w}(\mathbf{x})=\sum_{\mathfrak{f}\in \overline{\mathcal{F}_{w}}} (\mathbf{x})^{wt(\mathfrak{f})}.\\
\end{eqnarray*}
\end{definition}

\begin{definition}\cite{LaPy07}
Define the  conjugate weak symmetric Grothendieck function and the weak stable Grothendieck function, respectively, by:
\begin{eqnarray*}
\prescript{*}{}{G_{\lambda}}(\mathbf{x})=\Omega_x ( G_{\lambda}(\mathbf{x}))\\
\prescript{*}{}{\mathcal{G}_{w}}(\mathbf{x})=\Omega_x(\mathcal{G}_{w}(\mathbf{x})).
\end{eqnarray*}
\end{definition}

\begin{definition}
Define the  balanced double Grothendieck function and the stable double Grothendieck function \cite{Buch02}, respectively, by:
\begin{eqnarray*}
G_{\lambda}(\mathbf{x},\mathbf{y})=\sum_{T \in \overline{PSVT(\lambda)}} (\mathbf{x},\mathbf{y})^{wt(T)}\\
 {\mathcal{G}}_{w}(\mathbf{x},\mathbf{y})=\sum_{\mathfrak{f}\in \overline{\mathcal{F}^{\square}_{w}}} (\mathbf{x},\mathbf{y})^{wt(\mathfrak{f})}.
\end{eqnarray*}
\end{definition}

\begin{definition}
Define the weak balanced double Grothendieck function and the weak stable double Grothendieck function, respectively, by:
\begin{eqnarray*}
\prescript{*}{}{G_{\lambda}}(\mathbf{x},\mathbf{y})=\Omega_x\Omega_y( G_{\lambda}(\mathbf{x},\mathbf{y}))\\
\prescript{*}{}{\mathcal{G}_{w}}(\mathbf{x},\mathbf{y})=\Omega_x\Omega_y(\mathcal{G}_{w}(\mathbf{x},\mathbf{y})).
\end{eqnarray*}
\end{definition}

This allows us to state the main theorem of this section:
\begin{theorem} \label{tabt}
Given a partition $\mu$ we say that $\rho \sdot \mu$ if $\rho \subseteq \mu$ and $\mu/\rho$ contains no two boxes in the same row and no two boxes in the same column.  For a tableau, $T$, let $T_s$ denote the shape of $T$.  We have:
\begin{eqnarray}
 {\mathcal{G}}_{w}(\mathbf{x},\mathbf{y})=
 \sum_{T \in HT_{w}} \sum_{\rho \sdot \mu \subseteq T_s} G_{T_s/\rho}(\mathbf{x})G_{\mu'}(\mathbf{y}) \label{first}\\
\prescript{*}{}{\mathcal{G}_{w}}(\mathbf{x},\mathbf{y})=
 \sum_{T \in HT_{w}} \sum_{\rho \sdot \mu \subseteq T_s} \prescript{*}{}{G_{T_s/\rho}}(\mathbf{x})\prescript{*}{}{G_{\mu'}}(\mathbf{y}) \label{second}.
\end{eqnarray}
\end{theorem}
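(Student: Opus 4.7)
The plan is to get the first equation by passing to the limit $m \to \infty$ in Corollary \ref{tab}, and then to deduce the second equation by applying the involution $\Omega_{\mathbf{x}}\Omega_{\mathbf{y}}$ to the first. Both steps rest on the fact that the right-hand sides, although formally over the infinite set $HT_{\omega}$, are well-defined as formal power series because only finitely many $T \in HT_{\omega}$ contribute to any fixed total degree.

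For equation \ref{first}, I would first note that by Definition \ref{dff} and Proposition \ref{squ}, taking $m$ large enough (so that $m+1$ exceeds the maximum number of variables actually being examined in any truncation) stabilizes $\mathcal{G}_{\omega}(\check{x},\check{y})$. Corollary \ref{tab} already gives the finite-variable identity
\[
\mathcal{G}_{\omega}(\check{x},\check{y}) \;=\; \sum_{T \in HT_{\omega}} \sum_{\rho \,\sdot\, \mu \subseteq T_s} G_{T_s/\rho}(\check{x})\,G_{\mu'}(\check{y}),
\]
and since $G_{T_s/\rho}(\check{x}) \to G_{T_s/\rho}(\mathbf{x})$ and $G_{\mu'}(\check{y}) \to G_{\mu'}(\mathbf{y})$ termwise as $m \to \infty$, and the outer sum is locally finite in each fixed degree, one can exchange the sum and the limit to obtain \ref{first}.

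For equation \ref{second}, I would apply $\Omega_{\mathbf{x}}\Omega_{\mathbf{y}}$ to both sides of \ref{first}. On the left this yields $\prescript{*}{}{\mathcal{G}_{\omega}}(\mathbf{x},\mathbf{y})$ by definition. On the right, since $\Omega_{\mathbf{x}}$ is $\mathbb{Z}[\mathbf{y}]$-linear and acts on the $\mathbf{x}$-variables alone (and dually for $\Omega_{\mathbf{y}}$), the two involutions factor through the product $G_{T_s/\rho}(\mathbf{x})\,G_{\mu'}(\mathbf{y})$ to give $\Omega_{\mathbf{x}}G_{T_s/\rho}(\mathbf{x}) \cdot \Omega_{\mathbf{y}}G_{\mu'}(\mathbf{y}) = \prescript{*}{}{G_{T_s/\rho}}(\mathbf{x})\,\prescript{*}{}{G_{\mu'}}(\mathbf{y})$, where we take the skew version $\prescript{*}{}{G_{T_s/\rho}}$ to be defined as $\Omega_{\mathbf{x}} G_{T_s/\rho}(\mathbf{x})$ in direct analogy with the straight-shape case.

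The main obstacle I anticipate is a bookkeeping issue rather than a conceptual one: one must check that the infinite sum over $T \in HT_{\omega}$ is a well-defined element of $\mathbb{Z}[[\mathbf{x},\mathbf{y}]]$ on which $\Omega_{\mathbf{x}}\Omega_{\mathbf{y}}$ acts termwise. This requires observing that in any fixed bidegree, only finitely many shapes $T_s$ (hence only finitely many $T$, $\rho$, $\mu$) contribute, because $G_{T_s/\rho}(\mathbf{x})$ has lowest-degree term of degree $|T_s/\rho|$ and similarly for $G_{\mu'}(\mathbf{y})$. Given this finiteness per degree, swapping $\Omega$ with the sum is immediate, and the proof reduces to the already-established Corollary \ref{tab} together with the definitions of the weak Grothendieck functions.
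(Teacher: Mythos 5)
Your proposal is correct and follows essentially the same approach as the paper: take $m\to\infty$ in Corollary \ref{tab} to obtain equation \ref{first}, then apply $\Omega_{\mathbf{x}}\Omega_{\mathbf{y}}$ to obtain equation \ref{second}. The extra care you take to justify local finiteness in each degree and the termwise action of the involutions is sensible bookkeeping that the paper leaves implicit.
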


\begin{proof}
Equation \ref{first} follows from  equation \ref{tabf}.  Equation \ref{second} follows by applying $\Omega_x \circ \Omega_y$ to equation \ref{first}.
\end{proof}

\begin{remark}\label{tabl}
Since $G_{\lambda}(\mathbf{x})$ can be defined combinatorially in terms of  set-valued tableaux of shape $\lambda'$ \cite{Buch02}, equation \ref{first} gives us a way to express  ${\mathcal{G}}_{w}(\mathbf{x},\mathbf{y})$ in terms of triples of tableaux, $(T,P,Q)$ where $T$ is a Hecke tableau and $P$ and $Q$ are (possibly skew) set-valued tableaux.    Similarly, since $\prescript{*}{}{G_{\lambda}}(\mathbf{x})$ can be defined combinatorially  in terms of  weak set-valued tableaux of shape $\lambda'$ \cite{LaPy07}, equation \ref{second} gives us a way to express  ${\mathcal{G}}_{w}(\mathbf{x},\mathbf{y})$ in terms of triples of tableaux, $(T,P,Q)$ where $T$ is a Hecke tableau and $P$ and $Q$ are (possibly skew) weak set-valued tableaux.  
\end{remark}

\section{Half weak double Grothendieck functions}\label{half}
\subsection{Motivation}  We begin by noting that there is a large degree of flexibility in the combinatorial models we have chosen in this paper.

First note that ${G}_{\lambda}(\mathbf{x},\mathbf{y})$ is expressed combinatorially in terms of primed set valued tableaux with entries from the infinite alphabet $\{1',2',\ldots,1,2,\ldots\}$.  On the other hand, it may be shown that $\prescript{*}{}{{G}_{\lambda}}(\mathbf{x},\mathbf{y})$ may be expressed in terms of ``primed \emph{multiset} valued tableaux" where boxes can now be filled with multisets from $\{1',2',\ldots,1,2,\ldots\}$.  Further, in both cases, the ordering of the alphabet $\{1',2',\ldots,1,2,\ldots\}$ is irrelevant as long as it is fixed.  We can, for instance, assume the order $1'<1<2'<2<\cdots$ in either definition.  This can be proven using a generalization of the maps $\nearrow$ and $\searrow$ from Lemma 2.2 of  \cite{Hawkes.2022} to the set valued and multiset valued cases respectively.

As a side note,  this freedom of ordering implies that the functions ${{G}}_{\lambda}(\mathbf{x},\mathbf{x})$ are actually certain skew $GQ$ functions as defined in \cite{IN13}: Precisely, suppose that the partition $\lambda$ has $k$ parts and let $\delta=(k-1,k-2,\ldots,1)$ be the staircase partition.    Then ${{G}}_{\lambda}(\mathbf{x},\mathbf{x})$ is the  K-theoretic $GQ$ function of \cite{IN13} indexed by the skew shifted   shape $(\lambda+\delta)/\delta$\footnote{Here  $\lambda+\delta$ means the strict partition obtained by adding $\delta$ to $\lambda$ coordinate-wise.  Since $\delta$ itself is a strict partition we can consider both $\delta$ and  $\lambda+\delta$ as shifted shapes and   $(\lambda+\delta)/\delta$ as a skew shifted shape.   This allows us to consider the straight-shape partition $\lambda$ as  a skew shifted shape.}. 
This can be seen by noting that primed set valued tableaux of shape $\lambda$ using the particular ordering of $1'<1<2'<2<\cdots$ are precisely the set-valued shifted tableaux of \cite{IN13} of shape $(\lambda+\delta)/\delta$.  Interestingly, it appears that ${{G}}_{\lambda}(\mathbf{x},\mathbf{x})$ expands in terms of non-skew $GQ$ functions and that this fact is a result of a more general phenomenon.   In Conjecture \ref{conj} of Section \ref{open} we give a precise statement  of what we mean by this (see also  Conjecture 5.14 of \cite{LM21}).


On the other hand, $\mathcal{G}_{w}(\mathbf{x},\mathbf{y})$ may be expressed combinatorially in terms of (unbounded) double Hecke factorizations where we allow an infinite number of factors to the left and to the right.  Similarly, it may be shown that $\prescript{*}{}{\mathcal{G}_{w}}(\mathbf{x},\mathbf{y})$  has such a combinatorial interpretation if we change the strictly increasing and strictly decreasing requirements on the left and right hand factors, respectively, to weakly decreasing and weakly increasing, respectively.  Again there is freedom in how we choose to define  these double Hecke factorizations.  Namely, the relative order in which the (weakly) increasing and (weakly) decreasing factors appear is not important as long as it is fixed.  For example, instead of requiring the increasing factors of an (unbounded) double Hecke factorization to all appear to the left of the decreasing factors, we may instead require they appear to the right of the decreasing factors, or, we could require that increasing and decreasing factors alternate, or, any other fixed arrangement of increasing and decreasing factors. The situation is the same for the weak case. These statements can be proven using the maps $\uparrow$ and $\downarrow$ of section 3 along with an adaptation of these maps to the weak case.

 If we were to choose to define (unbounded) double Hecke factorizations using the requirement that we must alternate between (strictly) decreasing and (strictly) increasing factors we would in essence end up with a definition  in terms of ``strict hook factors" where the decreasing part of each factor contributes to the $x$-weight and the increasing part to the $y$-weight. This is precisely the definition of type $B$ Stanley symmetric functions in \cite{Billey.Haiman.1995} except that those authors have only one weight per hook factor (which is the total length of that hook factor) and they only consider reduced words. Since $\mathcal{G}_{w}(\mathbf{x},\mathbf{x})$ also computes the weight this way it agrees with the type $B$ Stanley symmetric function on terms of lowest degree (since the terms of lowest degree in $\mathcal{G}_{w}(\mathbf{x},\mathbf{x})$ represent precisely the reduced words for $w$).  As an example if $w =(\mathbf{3}, \mathbf{2},\mathbf{1})$ then (we leave it to the reader to compute) that $\mathcal{G}_{w}(x_1,x_2,x_1x_2)=2x_1^3+8x_1^2x_2+8x_1x_2^2 +2x_2^2+8x_1^3x_2+16x_1^2x_2^2+8
x_1x_2^3+\cdots$ whereas the type $ B$ Stanley symmetric function is just $2x_1^3+8x_1^2x_2+8x_1x_2^2 +2x_2^2$.

 However, since  $\prescript{*}{}{\mathcal{G}_{w}}(\mathbf{x},\mathbf{y})$ can be interpreted combinatorially using ``weak" (unbounded) double Hecke factorizations with the requirement that we must alternate between (weakly) decreasing and (weakly) increasing factors we see that   $\prescript{*}{}{\mathcal{G}_{w}}(\mathbf{x},\mathbf{y})$ has a similar combinatorial interpretation in terms of ``weak hook factorizations."  Now, any weak hook factorization corresponding to a reduced word is  automatically also a strict hook factorization (and vice versa).  Thus $\prescript{*}{}{\mathcal{G}_{w}}(\mathbf{x},\mathbf{x})$ also agrees with the type $B$ Stanley symmetric function on terms of lowest degree.

Finally,  we could also  define a new function $\prescript{\times}{}{\mathcal{G}_{w}}(\mathbf{x},\mathbf{y})$  using ``half weak" hook factorizations that are composed of factors that are strictly decreasing and then weakly increasing.  By similar reasoning as above we see that $\prescript{\times}{}{\mathcal{G}_{w}}(\mathbf{x},\mathbf{x})$ would also generalize the type $B$ Stanley symmetric function in the sense that its lowest degree term would return the latter.    Moreover, there is a natural interpolation of ${G}_{w}(\mathbf{x},\mathbf{y})$ and $\prescript{*}{}{{G}_{w}}(\mathbf{x},\mathbf{y})$, which we denote $\prescript{\times}{}{{G}_{w}}(\mathbf{x},\mathbf{y})$, such that $\prescript{\times}{}{\mathcal{G}_{w}}(\mathbf{x},\mathbf{y})$ expands in terms of  $\prescript{\times}{}{{G}_{w}}(\mathbf{x},\mathbf{y})$ with non-negative integer coefficients.  Further, the expansion coefficients are the same as those in the expansion of    $\prescript{}{}{\mathcal{G}_{w}}(\mathbf{x},\mathbf{y})$  in terms of   $\prescript{}{}{{G}_{w}}(\mathbf{x},\mathbf{y})$ (and so also as in the expansion of $\prescript{*}{}{\mathcal{G}_{w}}(\mathbf{x},\mathbf{y})$  in terms of   $\prescript{*}{}{{G}_{w}}(\mathbf{x},\mathbf{y})$).  In fact the relation  between  $\prescript{\times}{}{\mathcal{G}_{w}}(\mathbf{x},\mathbf{y})$ and  $\prescript{\times}{}{{G}_{w}}(\mathbf{x},\mathbf{y})$ appears more naturally in the sense that it can be proven directly using Hecke insertion (remark \ref{pita} does not cause issues in this case).  However, the most compelling evidence that $\prescript{\times}{}{{\mathcal{G}}_{w}}(\mathbf{x},\mathbf{x})$ is a more natural generalization of the type $B$ Stanley symmetric function than ${\mathcal{G}}_{w}(\mathbf{x},\mathbf{x})$ or $\prescript{*}{}{\mathcal{G}_{w}}(\mathbf{x},\mathbf{x})$  is that, like the type $B$ Stanley symmetric function,  $\prescript{\times}{}{{\mathcal{G}}_{w}}(\mathbf{x},\mathbf{x})$ is $Q$-Schur positive whereas neither ${\mathcal{G}}_{w}(\mathbf{x},\mathbf{x})$ nor $\prescript{*}{}{\mathcal{G}_{w}}(\mathbf{x},\mathbf{x})$  is.

For these reasons it seems like $\prescript{\times}{}{\mathcal{G}_{w}}(\mathbf{x},\mathbf{x})$ is a suitable candidate for  a type $B$ stable Grothendieck function.  Of course, our definition is incomplete in the sense that it is only defined for (unsigned) permutations whereas we would ideally like something defined  for all signed permutations. This is equivalent to adding a rule as to how the special generator, $s_0$,  of the type $B$ Weyl group should be incorporated into the definition of hook Hecke factorization found in the next subsection.  We leave this as an open problem.

\subsection{Results}
We need to introduce a number of definitions:

\begin{definition}
A \emph{hook Hecke factorization} of $w$ is a factorization into hook factors.  Each hook factor contains a subset of $\{\circled{1},\circled{2},\ldots\}$ and a multiset from $\{1,2,\ldots\}$ arranged so that all circled factors lie to the left of all uncircled factors and such that the circled elements are strictly decreasing left to right and the uncircled elements are weakly increasing left to right. Moreover, erasing the circles and parentheses should give a Hecke word for $w$. For instance, $\big(\circled{3}\circled{2}{2}{3}{3}\big)\big(\circled{1}{2}{2}\big)\big(\circled{3}\circled{2}1133\big)$ is a hook Hecke factorization for the permutation $(4,3,2,1) \in S_4$. The \emph{$x$-weight}   of  a  hook Hecke factorization is the vector whose $i^{th}$ entry records the number of uncircled elements in its $i^{th}$ factor. The $x$-weight of the example above is $(3,2,4)$.   The \emph{$y$-weight} of such a factorization is the vector whose $i^{th}$ entry records the number of circled entries in the $i^{th}$ factor.  The $y$-weight of the example above is $(2,1,2)$. 
\end{definition}

\begin{remark}\label{symmetry}
There is a symmetry that must be broken as to the assignment of the $x$-weight and $y$-weight.  While the $x$-weight of $\mathcal{G}_{w}(\mathbf{x},\mathbf{y})$ corresponds to strictly decreasing factors, the $x$-weight of  $\prescript{*}{}{\mathcal{G}_{w}}(\mathbf{x},\mathbf{x})$ corresponds to weakly increasing factors.  Thus either half of the  hooks could naturally be chosen as the $x$-weight of a hook Hecke factorization.  We break the symmetry in such a way that agrees with \cite{Hawkes.2022}.
\end{remark}

\begin{definition} Denote the set of all hook Hecke factorizations of $w$  by $\prescript{\times}{}{\mathcal{F}}_{w}$.  Define the half weak stable double Grothendieck function by
\begin{eqnarray*}
\prescript{\times}{}{\mathcal{G}}_{w}(\mathbf{x},\mathbf{y})=\sum_{\mathfrak{f}\in \prescript{\times}{}{\mathcal{F}}_{w}} (\mathbf{x},\mathbf{y})^{wt(\mathfrak{f})}.
\end{eqnarray*}
\end{definition}

\begin{definition}  Consider the ordered alphabet $1'<1<2'<2<\cdots$. A
 \emph{primed special multiset tableau}  of shape $\lambda$,  or an element of $PSMT(\lambda)$,
 is a filling of a Young diagram of shape $\lambda$ such that each box is nonempty and contains a multiset from this alphabet such that 
\begin{itemize}
\item All of the entries in a box are less than or equal to  all of the entries in the box to its right.
\item All of the entries in a box are less than or equal to  all of the entries in the box below it.
\item $i$ appears in at most one box in each column.
\item $i'$ appears in at most one box in each row.
\item Each box contains at most one $i'$.
\end{itemize} 
\end{definition}

\begin{definition} 
If $T \in PSMT(\lambda)$ then the $x$-weight\footnote{The assignment is the reverse of that which appears in definition \ref{PSVT} as a result of the way we have broken the symmetry referred to in remark \ref{symmetry}.} of $T$  is the vector whose $i^{th}$ entry records the number of instances of $i$ in $T$ and the $y$-weight of $T$  is the vector whose $i^{th}$ entry records the number of instances of $i'$ in $T$.   Define the half weak balanced double Grothendieck function by
\begin{eqnarray*}
\prescript{\times}{}{{G}}_{w}(\mathbf{x},\mathbf{y})=\sum_{T \in PSMT(\lambda)} (\mathbf{x},\mathbf{y})^{wt(T)}.
\end{eqnarray*}
\end{definition}

\begin{example} A primed special multiset tableau,  $Q \in {PSMT}(3,3,2)$, with $x$-weight of $(3,2,3,0,0,\ldots)$ and $y$-weight of $(1,3,3,0,0,\ldots)$ is shown below.

\begin{eqnarray*}
Q=\Yboxdim{24pt}
\young({{1'11}}{{12'}}{{23'}},{{2'}}{{2}}{{3'33}},{{2'3'}}{{3}})
\end{eqnarray*}
\end{example}

The following lemma and its corollary relates these definitions.

\begin{lemma}\label{SMbij}
 There is an $x$-weight and $y$-weight preserving bijection from $\prescript{\times}{}{\mathcal{F}}_{w}$ to pairs $(P,Q)$ where $P \in HT_{w}$ and $Q \in PSMT$ have the same shape.    Here the $x$-weight and $y$-weight  of $(P,Q)$ are defined as the $x$-weight and $y$-weight of $Q$.  
\end{lemma}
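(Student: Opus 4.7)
The plan is to construct the bijection via a variant of semistandard Hecke insertion, specialized to hook factors. Given $\mathfrak{f} = f_1 f_2 \cdots \in \prescript{\times}{}{\mathcal{F}}_{\omega}$, decompose each hook factor as $f_i = f_i^{\circ} f_i^{\times}$, where $f_i^{\circ}$ is the strictly decreasing circled segment and $f_i^{\times}$ is the weakly increasing uncircled segment. Stripping circles and parentheses yields a Hecke word $w$ for $\omega$; apply ordinary Hecke insertion to $w$ to produce an insertion tableau $P \in HT_{\omega}$. Build the recording tableau $Q$ by placing the label $i'$ (respectively $i$) at the position selected at step $k$ of the insertion, whenever the $k$-th letter of $w$ originated in $f_i^{\circ}$ (respectively $f_i^{\times}$). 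Positions are chosen as in standard Hecke insertion: a newly created box if the element was appended, and the bottom of the column of the final affected row if the element disappeared.

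By Proposition \ref{Hebij}, $P$ is a Hecke tableau for $\omega$. Three things remain to verify: that $Q$ is a primed set multiset tableau of the same shape as $P$, that the correspondence is bijective, and that weights are preserved. For the PSMT property of $Q$, one must check that the rows and columns are weakly increasing in the alphabet order $1' < 1 < 2' < 2 < \cdots$, that each $i'$ appears in at most one row and at most once per box, and that each $i$ appears in at most one column. The ordering condition follows because labels are assigned in factor order with $i'$-labels preceding $i$-labels within factor $i$, and Hecke insertion respects the shape-addition order. The vertical-strip condition on $i'$ follows directly from Lemma \ref{combined}: since $f_i^{\circ}$ is strictly decreasing, the recording positions of consecutive $i'$-labels lie in strictly descending rows, so no two $i'$-labels share a row or a box. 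The horizontal-strip condition on $i$ --- the most delicate point --- amounts to showing that when inserting the weakly increasing $f_i^{\times}$, the cells of $Q$ that receive a new $i$-label form a strip with no two cells in a single column; duplicate letters should fall into an existing cell (adding multiplicity) rather than creating a new box in the same column.

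The inverse map reverses the procedure. Given $(P,Q)$ with $Q \in PSMT$, read off the labels of $Q$ in reverse alphabet order $\ldots, 2, 2', 1, 1'$, and within each label class perform reverse Hecke insertion, processing the $i$-label cells in reverse horizontal-strip order and the $i'$-label cells in reverse vertical-strip order. This reconstructs the Hecke word $w$ together with its partition into hook factors, from which circles are restored on the letters corresponding to primed labels. Well-definedness follows by running the forward map on the output and appealing to injectivity of reverse Hecke insertion. Weight preservation is immediate since by construction the number of $i$-labels (respectively $i'$-labels) in $Q$ equals the number of uncircled (respectively circled) entries in $f_i$.

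The main obstacle will be the ``at most one $i$ per column'' condition for labels arising from the weakly increasing block $f_i^{\times}$. Remark \ref{pita} cautions that the converse of Lemma \ref{combined} fails for ascents, so one cannot invoke a symmetric column-direction statement; a direct case analysis of the Hecke insertion rules is required, showing that two consecutive equal uncircled letters force the second to disappear into the very cell just affected by the first, keeping both labels inside a single box. This is the same subtlety that forced the reverse-and-transpose detour in Proposition \ref{flipn}, and it is precisely the ``half weak'' structure of a hook factor --- strict decrease followed by weak increase --- that aligns with the row-strict/column-strict pattern of a PSMT and allows the argument here to proceed without that detour.
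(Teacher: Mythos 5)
Your approach is essentially the paper's: run Hecke insertion on the underlying word, use the factorization to assign primed/unprimed labels, and invoke Proposition \ref{Hebij} plus Lemma \ref{combined}. The paper phrases it slightly more cleanly by characterizing both a hook Hecke factorization and a $PSMT$ as a pair (base object, set partition of $\{1,\ldots,N\}$ into consecutive blocks with alternating descent/weak-ascent conditions), and then noting that Lemma \ref{combined} makes the Hecke-insertion bijection compatible with the identity map on set partitions. This dispenses with the explicit inverse construction you sketch, since it is inherited wholesale from Proposition \ref{Hebij}.

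However, you misdiagnose the supposed ``main obstacle.'' You worry that the ``at most one $i$ per column'' condition requires a column-direction converse of Lemma \ref{combined} and, since Remark \ref{pita} shows such a converse fails, that a bespoke case analysis of the insertion rules is needed. In fact, no column-direction statement is involved. For the weakly increasing segment $f_i^{\times}$ you only need: if $w_a \leq w_{a+1}$ then $a+1$ appears in the same box as $a$ or strictly to the right of $a$ in $Q$. This is precisely the \emph{contrapositive} of Lemma \ref{combined} (``$w_a > w_{a+1}$ iff $a+1$ strictly below $a$'') combined with the elementary observation that in a standard set-valued tableau, ``$a+1$ not in a row strictly below $a$'' forces ``$a+1$ in the same box as $a$ or in a column strictly to the right of $a$'' (if $a+1$ sat strictly below and strictly right there would be a box between them whose entries would have to lie strictly between $a$ and $a+1$, which is impossible). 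Remark \ref{pita} concerns the failure of a \emph{column}-version of the lemma for \emph{strict} ascents; it has no bearing here because the weak-ascent/row-version you need is already supplied by Lemma \ref{combined} itself. So the argument goes through with no case analysis of the insertion rules, exactly because the hook structure (strict decrease then weak increase) matches the row-direction statement and its negation rather than requiring the problematic column-direction converse.
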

\begin{proof}
A hook Hecke factorization is just a Hecke word, $\alpha$, along with an ordered set partition of $\{1,2,\ldots,|\alpha|\}$ into parts such that
\begin{itemize}
\item Each part contains consecutive numbers and parts with smaller numbers precede parts with larger numbers.
\item If $a$ and $a+1$ occur in the same one of one of parts number $1,3,5,\ldots$ then, $\alpha_a > \alpha_{a+1}$.
\item If $a$ and $a+1$ occur in the same one of one of  parts number $2,4,6,\ldots$ then, $\alpha_a \leq \alpha_{a+1}$.
\end{itemize}
On the other hand using a standardization argument we see that a $PSMT$ is just a  standard set valued tableau, $T$, along with a set partition of $\{1,2,\ldots, max(T)\}$ into parts such that
\begin{itemize}
\item Each part contains consecutive numbers and parts with smaller numbers precede parts with larger numbers.
\item If $a$ and $a+1$ occur in the same one of one of  parts number $1,3,5,\ldots$ then $a+1$ lies strictly below $a$ in $T$.
\item If $a$ and $a+1$ occur in the same one of one of  parts number $2,4,6,\ldots$ then $a+1$ lies either in the same box as $a$ or strictly right of $a$ in $T$.
\end{itemize}
Lemma \ref{combined} implies that if $\alpha \rightarrow (P,Q)$ under the bijection of Proposition \ref{Hebij} then a given set partition of $\{1,2,\ldots, |\alpha|\}$  turns $\alpha$ into a hook Hecke factorization if and only if the same set partition turns $Q$ into a $PSMT$.  Thus combining the bijection of Proposition \ref{Hebij} with the identity on set partitions induces the weight preserving bijection of the lemma. 
\end{proof}

\begin{corollary}\label{ex1}
Letting $H_{w}^{\rho}$ denote the number of Hecke tableaux for $w$ with shape $\rho$ we have
\begin{eqnarray*}
 \prescript{\times}{}{\mathcal{G}}_{w}(\mathbf{x},\mathbf{y})=
 \sum_{\rho}( H_{w}^{\rho}) \prescript{\times}{}G_{\rho}(\mathbf{x},\mathbf{y}).
\end{eqnarray*}
\end{corollary}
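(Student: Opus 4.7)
The plan is to unpack the definition of $\prescript{\times}{}{\mathcal{G}}_{\omega}(\mathbf{x},\mathbf{y})$ and apply Lemma \ref{SMbij} directly, then group by shape. There is essentially no new work to do, since the bijection in Lemma \ref{SMbij} is precisely what converts the factorization sum into a sum over pairs of tableaux indexed by a common shape.

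First I would write
\begin{eqnarray*}
\prescript{\times}{}{\mathcal{G}}_{\omega}(\mathbf{x},\mathbf{y})
=\sum_{\mathfrak{f}\in \prescript{\times}{}{\mathcal{F}}_{\omega}} (\mathbf{x},\mathbf{y})^{wt(\mathfrak{f})}
\end{eqnarray*}
straight from the definition of the half weak stable double Grothendieck function. Then I would invoke Lemma \ref{SMbij} to replace the sum over $\mathfrak{f}$ by a sum over pairs $(P,Q)$ of the same shape with $P \in HT_{\omega}$ and $Q \in PSMT$, using that the bijection preserves both the $x$-weight and the $y$-weight (and that the weight of $(P,Q)$ is defined as the weight of $Q$):
\begin{eqnarray*}
\prescript{\times}{}{\mathcal{G}}_{\omega}(\mathbf{x},\mathbf{y})
=\sum_{\rho}\ \sum_{P\in HT_{\omega}(\rho)}\ \sum_{Q\in PSMT(\rho)} (\mathbf{x},\mathbf{y})^{wt(Q)}.
\end{eqnarray*}

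Next I would observe that the innermost sum no longer depends on $P$, and that the number of $P\in HT_{\omega}(\rho)$ is by definition $H_{\omega}^{\rho}$, while the inner sum over $Q\in PSMT(\rho)$ is by definition $\prescript{\times}{}{G_{\rho}}(\mathbf{x},\mathbf{y})$. Collecting these identifications gives
\begin{eqnarray*}
\prescript{\times}{}{\mathcal{G}}_{\omega}(\mathbf{x},\mathbf{y})
=\sum_{\rho} H_{\omega}^{\rho}\,\prescript{\times}{}{G_{\rho}}(\mathbf{x},\mathbf{y}),
\end{eqnarray*}
which is the claim. Since Lemma \ref{SMbij} has already been proven and the definitions of $H_{\omega}^{\rho}$ and $\prescript{\times}{}G_{\rho}(\mathbf{x},\mathbf{y})$ are literally the cardinalities and generating functions appearing in the factored sum, there is no real obstacle here — the only subtle point to double-check is that the bijection of Lemma \ref{SMbij} genuinely identifies $wt(\mathfrak{f})$ with $wt(Q)$ (not $wt(P)$), which is already built into the statement of that lemma.
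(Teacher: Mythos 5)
Your proof is correct and matches the paper's approach exactly: the paper's proof of this corollary is simply ``This follows from the lemma above,'' and your write-up is precisely the intended unpacking --- apply the weight-preserving bijection of Lemma \ref{SMbij}, split the sum over pairs $(P,Q)$ by shape, and factor out the count $H_{\omega}^{\rho}$ since the inner sum over $Q \in PSMT(\rho)$ is independent of $P$.
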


\begin{proof}
This follows from the lemma above.  
\end{proof}

Next, we need to define another couple of types of tableaux:

\begin{definition}
Let $\mu \subseteq \lambda$ be partitions with an equal number of rows. A \emph{flagged reverse semistandard tableau} of shape $\lambda/\mu$, or an element of ${FRSST}(\lambda/\mu)$, is a filling of a Young diagram of shape $\lambda/\mu$ using the alphabet $1<2<\cdots$ such that:
\begin{itemize}
\item Each box in row $i$ of $\lambda/\mu$ contains one element from $\{1,2,\ldots,\mu_i\}$.
\item The rows are weakly decreasing from left to right.
\item The columns are strictly decreasing from top to bottom.  
\end{itemize}
On the other hand, if $\lambda$ contains more rows than $\mu$, we define ${FRSST}(\lambda/\mu)=\emptyset$.  
\end{definition}

\begin{example} An $FRSST$  of shape  $(6,6,5,4)/(4,3,2,1))$ with the inner shape shown filled with grey boxes is shown below.
\begin{center}
\ytableausetup{centertableaux}
\ytableaushort
{\none \none \none \none 42, \none \none \none 321, \none \none 221, \none 111}
* {6,6,5,4}
* [*(lightgray)]{4,3,2,1}
\end{center}

Note that the definition requires that the maximum number in row $i$ is no greater than the number of shaded boxes in that row (which is $\mu_i$).

\end{example}

\begin{definition}
A primed tableau of shape $\lambda$, or element of $PT(\lambda)$ is an element of $PSMT(\lambda)$ with exactly one entry in each box.  
\end{definition}

\begin{definition}\label{}
The double $Q$-Schur function is defined as:
\begin{eqnarray*}
R_{\lambda}(\mathbf{x},\mathbf{y})=\sum_{T \in PT(\lambda)} (\mathbf{x},\mathbf{y})^{wt(T)}.
\end{eqnarray*}
\end{definition}
The relationship between $PSMT$, $PT$, and $FRSST$ (called $OFT$ therein) is given by part (1) of Lemma 1.9 of \cite{Hawkes.2022}.
\begin{lemma}[\cite{Hawkes.2022}]
 There is an $x$-weight and $y$-weight preserving bijection from $PSMT(\mu)$ to pairs of tableaux $(P,Q)$ where $P \in PT(\lambda)$ and $Q \in FRSST(\lambda/\mu)$ for some $\lambda \supseteq \mu$.  Here the $x$-weight and $y$-weight  of $(P,Q)$ are defined as the $x$-weight and $y$-weight of $P$.  
\end{lemma}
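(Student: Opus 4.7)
The plan is to exhibit an explicit bijection $\Phi \colon PSMT(\mu) \to \bigsqcup_{\lambda \supseteq \mu} PT(\lambda) \times OFT(\lambda/\mu)$ that ``unstacks'' each multiset of a $PSMT$ into individual cells of a larger $PT$, while recording the multiset of origin of each extra entry through the flagged tableau $Q$.

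First I would construct $\Phi$ row by row. For each row $i$ of $T \in PSMT(\mu)$ with multisets $M_1^{(i)}, \ldots, M_{\mu_i}^{(i)}$, the $PSMT$ row condition forces $\max M_c^{(i)} \leq \min M_{c+1}^{(i)}$ in the alphabet $1' < 1 < 2' < 2 < \cdots$, so the concatenation of the $M_c^{(i)}$'s (each internally sorted) is weakly increasing; this becomes row $i$ of $P$, and each ``extra'' entry beyond the representative $\min M_c^{(i)}$ is tagged with the index $c$. A naive placement of these tags in cells to the right of row $i$ of $\mu$ would in general fail to give a partition shape $\lambda$ or satisfy the column-strict condition on $Q$, so I would then apply a canonical sliding procedure (carried out right-to-left and bottom-to-top) that relocates extras from a lower row to the row above whenever needed, transporting the tags along with the $P$-values.

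Next I would verify that $(P, Q)$ satisfies the $PT$ and $OFT$ conditions. The row conditions of $P$ are immediate from the within-row sort, and the column conditions of $P$ (weak increase, strict for unprimed) follow from the $PSMT$ column condition of $T$ restricted to the representatives together with the order imposed by the slides. For $Q$, entries in row $i$ lie in $\{1, \ldots, \mu_i\}$ by construction, rows weakly decrease by ordering extras in decreasing column of origin, and the strict decrease down columns follows because any slide from row $i+1$ to row $i$ produces a tag strictly greater than any tag placed in the same column of $\lambda/\mu$ in row $i+1$; this is the crucial place where the column compatibility of $T$ between adjacent rows is translated into the $OFT$ structure. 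The inverse $\Phi^{-1}$ reverses each slide and then restacks consecutive entries of each row of $P$ into multisets according to the tags of $Q$; reversibility of each slide gives $\Phi^{-1} \circ \Phi = \mathrm{id}$ and $\Phi \circ \Phi^{-1} = \mathrm{id}$. Weight preservation is immediate, since $P$ has the same multiset of entries as $T$ and $Q$ uses the unweighted alphabet.

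The main obstacle will be formalizing the sliding procedure precisely and checking that all the $PT$ and $OFT$ conditions are maintained at every step. A careful case analysis on primed versus unprimed extras is unavoidable, as is the verification that the termination shape $\lambda$ is a partition; I expect this to proceed by induction on $|\lambda/\mu|$ once the slide is made algorithmic. Once these technicalities are in place, the remaining bookkeeping (weight preservation, mutual inverse property) is routine.
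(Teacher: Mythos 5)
The paper does not contain its own proof of this statement; it is cited as part (1) of Lemma 1.9 of \cite{Hawkes.2022}, so there is no in-text argument against which to compare your proposal.

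Judged on its own terms, what you have written is a plan rather than a proof. The load-bearing step is the ``canonical sliding procedure,'' which is supposed to simultaneously guarantee that (a) the outer shape $\lambda$ obtained after unstacking is a partition with the same number of rows as $\mu$, (b) the $PT$ row and column conditions hold for $P$ at the end of the process, and (c) the tags recorded in $Q$ are weakly decreasing along rows, strictly decreasing down columns, and satisfy the flag condition $Q_{i,j}\le\mu_i$. You explicitly name formalizing this slide as ``the main obstacle,'' and none of (a)--(c) is actually verified. In particular, the claim that a slide from row $i+1$ to row $i$ always produces a tag strictly larger than any tag already placed in that column of $\lambda/\mu$ is asserted but not derived from the $PSMT$ axioms (``$i$ at most once per column,'' ``$i'$ at most once per row,'' ``at most one $i'$ per box''), and without the slide being specified algorithmically, the reversibility claims $\Phi^{-1}\circ\Phi=\mathrm{id}$ and $\Phi\circ\Phi^{-1}=\mathrm{id}$ are unsupported. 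Note also that your first step --- taking row $i$ of $P$ to be the sorted concatenation of the multisets in row $i$ of $T$ --- is not well defined as a tableau shape, since those row lengths need not weakly decrease, so the sliding must be interleaved with, not appended after, the unstacking. The general ``uncrowding'' strategy is plausible, but until the slide is made precise and each of the $PT$ and $OFT$ conditions is checked at every step, this does not establish the lemma.
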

A corollary of this lemma relates  $\prescript{\times}{}{\mathcal{G}}_{w}(\mathbf{x},\mathbf{y})$ to
$R_{\mu}(\mathbf{x},\mathbf{y}).$
\begin{corollary}\label{cor}
Letting $K_{\rho}^{\mu}$ be the number of $FRSST$ of shape $\mu/\rho$ we have
\begin{eqnarray*}
 \prescript{\times}{}{\mathcal{G}}_{w}(\mathbf{x},\mathbf{y})=
 \sum_{\rho \subseteq \mu}  H_{w}^{\rho} K_{\rho}^{\mu}  R_{\mu}(\mathbf{x},\mathbf{y}).
\end{eqnarray*}
\end{corollary}
\begin{proof}
This follows from the lemma above along with corollary \ref{ex1}.  
\end{proof}

Finally, we need to identify certain special elements of $PT(\lambda)$. (The following two definitions are more general than they need to be for this purpose alone. They are also used in Conjecture \ref{conj}.)

\begin{definition}\label{starting}
Let $T$ be any tableau whose entries come from the alphabet $\{1',2',\ldots,1,2,\ldots\}$.  If $T$ has  boxes with multiple entries, order the entries horizontally within those boxes as follows: Starting from the left and moving right,  list the unprimed entries in increasing order followed by the primed entries in decreasing order.  

Start with your  finger in the leftmost box of the lowest row of $T$.  Now move your finger left to right across rows moving from bottom row to top row stopping once your finger  lies over an instance of $i$ or $i'$ for the first time. If your finger lies over an $i'$ we say that $T$ has the \emph{$i$-primed property}.  If $T$ has no $i$ or $i'$ we also say that $T$ (trivially) has the \emph{$i$-primed property}.
\end{definition}

\begin{definition}\label{lattice}
Let $T$ be any tableau whose entries come from the alphabet $\{1',2',\ldots,1,2,\ldots\}$. If $T$ has  boxes with multiple entries, order the entries horizontally within those boxes as follows: Starting from the left and moving right,  list the unprimed entries in increasing order followed by the primed entries in decreasing order.  Let $\mathtt{prop}$ be a Boolean initialized to $\mathtt{true}$.
\begin{itemize}
\item Place your  finger in the rightmost box of the top row of $T$.  Drag your finger right to left moving down a row each time you get to the leftmost box of a row until you reach the bottom left box of $T$.  While you are scanning, any time your finger lies over an $i$ place a tally mark above $T$.  Any time your finger lies over an $i-1$ place a tally mark under $T$.  If there are ever more tally marks above $T$ than below $T$, set $\mathtt{prop}=\mathtt{false}$ and terminate the algorithm. If there are ever an equal number of tally marks above $T$ and below $T$ and your finger lies on an $i'$ also  set $\mathtt{prop}=\mathtt{false}$ and terminate the algorithm.
\item Next  (do NOT erase the tally marks from the last step), start with your finger in the leftmost box of the lowest row of $T$.  Now move your finger left to right across rows moving from bottom row to top row. While you are scanning, any time your finger lies over an $i'$ place a tally mark above $T$.  Any time your finger lies over an $(i-1)'$ place a tally mark under $T$. If there are ever more tally marks above $T$ than below $T$,  set $\mathtt{prop}=\mathtt{false}$ and terminate the algorithm.  If there are ever an equal number of tally marks above $T$ and below $T$ and your finger lies on an $i-1$ also  set $\mathtt{prop}=\mathtt{false}$ and terminate the algorithm.
\end{itemize}
We say that $T$ has the \emph{$i$-lattice property} if at the end of this process  $\mathtt{prop}=\mathtt{true}$.
\end{definition}

\begin{example} An element $P \in PT(6,4,4,4,3)$ is shown below.
\begin{eqnarray*}
\Yboxdim{14pt}
\young({{1'}}11111,1{{2'}}22,{{2'}}2{{3'}}3,2{{3'}}34,3{{4'}}4)
\end{eqnarray*}
Note that:
\begin{multicols}{2}
\begin{itemize}
\item $P$ lacks the $1$-primed property.
\item $P$ lacks the $2$-primed property.
\item $P$ lacks the $3$-primed property.
\item $P$  has the $4$-primed property.
\newline
\newline
\\
\item $P$ has the $1$-lattice property (trivially as there are no $0$s).
\item $P$ has the $2$-lattice property.
\item $P$ has the $3$-lattice property
\item $P$ does not have the $4$-lattice property: You set $\mathtt{prop}=\mathtt{false}$ when your finger first passes over the $4'$.
\end{itemize}
\end{multicols}
\end{example}

These definitions allow us to state a  particular case of Theorem 8.3 of \cite{Stembridge89}:

\begin{proposition}[\cite{Stembridge89}]\label{lat}
 We have:
\begin{eqnarray*}
R_{\mu}(\mathbf{x},\mathbf{x})=\sum_{\lambda} F_{\mu}^{\lambda} Q_{\lambda}(\mathbf{x})
\end{eqnarray*}
where $F_{\mu}^{\lambda}$ is the number of elements $T \in PT(\mu)$ that have $i$-lattice property and the $i$-primed property for all $i$ and such that the sum of the $x$-weight and $y$-weight of $T$ is equal to $\lambda$.
\end{proposition}

Combining Corollary \ref{cor} with Proposition \ref{lat}  will now give us the $Q$-Schur expansion of $\prescript{\times}{}{\mathcal{G}}_{w}(\mathbf{x},\mathbf{x})$:

\begin{theorem}  \label{QP}
The function $\prescript{\times}{}{\mathcal{G}}_{w}$ evaluated at $\mathbf{x}=\mathbf{y}$ is $Q$-Schur positive and:
\begin{eqnarray*}
 \prescript{\times}{}{\mathcal{G}}_{w}(\mathbf{x},\mathbf{x})=  \sum_{(\rho \subseteq \mu), \lambda} H_{w}^{\rho} K_{\rho}^{\mu}  F_{\mu}^{\lambda} Q_{\lambda}(\mathbf{x}).
\end{eqnarray*}
\end{theorem}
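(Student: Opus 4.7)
My plan is to simply chain together the three results developed in this subsection and observe that the structure constants are all manifestly non-negative integers. Nothing new needs to be proved; the work has been done in the preceding corollaries and in Stembridge's theorem.

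First, I would start from the corollary that precedes the statement, which gives
\begin{eqnarray*}
\prescript{\times}{}{\mathcal{G}}_{\omega}(\mathbf{x},\mathbf{y}) = \sum_{\rho \subseteq \mu} H_{\omega}^{\rho} K_{\rho}^{\mu}\, R_{\mu}(\mathbf{x},\mathbf{y}).
\end{eqnarray*}
This expansion was obtained by composing the bijection of Lemma \ref{SMbij} (from hook Hecke factorizations to pairs consisting of a Hecke tableau and a $PSMT$) with the bijection from $PSMT(\mu)$ to pairs $(P,Q)$ with $P \in PT(\lambda)$ and $Q \in OFT(\lambda/\mu)$ taken from Lemma 1.9 of \cite{Hawkes.2022}, and then invoking the definition of $R_{\mu}(\mathbf{x},\mathbf{y})$ as the generating function of $PT(\mu)$.

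Next I would specialize to $\mathbf{y}=\mathbf{x}$ and substitute the $Q$-Schur expansion of the double $Q$-Schur function supplied by the stated special case of Theorem 8.3 of \cite{Stembridge89}, namely
\begin{eqnarray*}
R_{\mu}(\mathbf{x},\mathbf{x}) = \sum_{\lambda} F_{\mu}^{\lambda}\, Q_{\lambda}(\mathbf{x}).
\end{eqnarray*}
Interchanging the order of summation and collecting coefficients gives exactly
\begin{eqnarray*}
\prescript{\times}{}{\mathcal{G}}_{\omega}(\mathbf{x},\mathbf{x}) = \sum_{\rho \subseteq \mu,\,\lambda} H_{\omega}^{\rho} K_{\rho}^{\mu} F_{\mu}^{\lambda}\, Q_{\lambda}(\mathbf{x}),
\end{eqnarray*}
which is the desired identity.

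For the $Q$-Schur positivity assertion, I would observe that each of the three structure constants $H_{\omega}^{\rho}$, $K_{\rho}^{\mu}$, and $F_{\mu}^{\lambda}$ is by construction a cardinality of a combinatorially defined set (Hecke tableaux of shape $\rho$ for $\omega$, overflagged tableaux of shape $\mu/\rho$, and primed tableaux of shape $\mu$ satisfying the lattice and starting properties with total weight $\lambda$, respectively), so each coefficient of $Q_{\lambda}(\mathbf{x})$ is a non-negative integer. Since no analytic subtleties arise in passing from $\prescript{\times}{}{\mathcal{G}}_{\omega}(\check{x},\check{y})$ to the stable limit (every term has bounded total degree in any fixed graded piece), there is no genuine obstacle in the argument; the only step requiring care is the bookkeeping when interchanging the sums over $\mu$ and $\lambda$, which is routine because for any fixed $\rho$ only partitions $\mu$ with $|\mu| \geq |\rho|$ and with $\mu/\rho$ of the restricted shape contribute, and for fixed $\mu$ the sum over $\lambda$ is finite by degree considerations.
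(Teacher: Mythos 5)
Your proposal is correct and is exactly the argument the paper intends: the theorem is obtained by substituting Stembridge's expansion $R_{\mu}(\mathbf{x},\mathbf{x})=\sum_{\lambda}F_{\mu}^{\lambda}Q_{\lambda}(\mathbf{x})$ into the preceding corollary and noting that all three coefficients are cardinalities of finite sets. The paper itself offers no further detail ("Combining everything from this subsection..."), so your write-up matches its proof in both substance and structure.
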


\begin{example}
Consider the permutation $w=(2,3,1,5,4) \in S_5$ and suppose that we are interested in computing the degree $4$ part of $\prescript{\times}{}{\mathcal{G}}_{(2,3,1,5,4)}(\mathbf{x},\mathbf{x})$.

To do this we first compute the three elements of $HT_{w}$ shown in the top of the diagram below.  For each such tableau, $H$, we compute all elements of $FRSST(\mu/ H_s)$ for all possible partitions $\mu \supseteq H_s$ that have $4$ boxes (since we are concerned with the degree $4$ part).  These are shown in the middle of the diagram below.  Finally, for each such tableau, $O$, we compute all elements of $PT(O_s)$ that have the $i$-primed property and the $i$-lattice property for all $i$.  The resulting tableaux are shown in the last two lines of the diagram.

\begin{eqnarray*}
 \begin{tikzpicture}[scale=0.37]
 \draw[step=1cm] (4,16) grid (6,17); 
 \draw[step=1cm] (4,15) grid (5,16); 
\node at (4.5,15.5) {$4$};
\node at (4.5,16.5) {$1$};
\node at (5.5,16.5) {$2$};

 \draw[step=1cm] (14,15) grid (15,16); 
 \draw[step=1cm] (14,16) grid (17,17); 
\node at (14.5,15.5) {$4$};
\node at (14.5,16.5) {$1$};
\node at (15.5,16.5) {$2$};
\node at (16.5,16.5) {$4$};

 \draw[step=1cm] (25,16) grid (28,17); 
\node at (25.5,16.5) {$1$};
\node at (26.5,16.5) {$2$};
\node at (27.5,16.5) {$4$};

 \draw[step=1cm] (0,8) grid (1,9); 
 \draw[step=1cm] (0,9) grid (3,10); 
\node at (2.5,9.5) {$1$};

 \draw[step=1cm] (4,8) grid (5,9); 
 \draw[step=1cm] (4,9) grid (7,10); 
\node at (6.5,9.5) {$2$};

 \draw[step=1cm] (8,8) grid (10,9); 
 \draw[step=1cm] (8,9) grid (10,10); 
\node at (9.5,8.5) {$1$};

 \draw[step=1cm] (14,8) grid (15,9); 
 \draw[step=1cm] (14,9) grid (17,10); 

 \draw[step=1cm] (20,9) grid (24,10); 
\node at (23.5,9.5) {$1$};

 \draw[step=1cm] (25,9) grid (29,10); 
\node at (28.5,9.5) {$2$};

 \draw[step=1cm] (30,9) grid (34,10); 
\node at (33.5,9.5) {$3$};

 \draw[step=1cm] (0,2) grid (1,3); 
 \draw[step=1cm] (0,3) grid (3,4);
\node at (0.5,2.5) {$1'$};
\node at (0.5,3.5) {$1'$};
\node at (1.5,3.5) {$1$};
\node at (2.5,3.5) {$1$};

 \draw[step=1cm] (4,2) grid (5,3); 
 \draw[step=1cm] (4,3) grid (7,4); 
\node at (4.5,2.5) {$1'$};
\node at (4.5,3.5) {$1'$};
\node at (5.5,3.5) {$1$};
\node at (6.5,3.5) {$1$};

 \draw[step=1cm] (8,2) grid (10,3); 
 \draw[step=1cm] (8,3) grid (10,4); 
\node at (8.5,2.5) {$1'$};
\node at (8.5,3.5) {$1'$};
\node at (9.5,3.5) {$1$};
\node at (9.5,2.5) {$2'$};

 \draw[step=1cm] (14,2) grid (15,3); 
 \draw[step=1cm] (14,3) grid (17,4); 
\node at (14.5,2.5) {$1'$};
\node at (14.5,3.5) {$1'$};
\node at (15.5,3.5) {$1$};
\node at (16.5,3.5) {$1$};

 \draw[step=1cm] (20,3) grid (24,4); 
\node at (20.5,3.5) {$1'$};
\node at (21.5,3.5) {$1$};
\node at (22.5,3.5) {$1$};
\node at (23.5,3.5) {$1$};

 \draw[step=1cm] (25,3) grid (29,4); 
\node at (25.5,3.5) {$1'$};
\node at (26.5,3.5) {$1$};
\node at (27.5,3.5) {$1$};
\node at (28.5,3.5) {$1$};

 \draw[step=1cm] (30,3) grid (34,4); 
\node at (30.5,3.5) {$1'$};
\node at (31.5,3.5) {$1$};
\node at (32.5,3.5) {$1$};
\node at (33.5,3.5) {$1$};

 \draw[step=1cm] (0,-1) grid (1,0); 
 \draw[step=1cm] (0,0) grid (3,1);
\node at (0.5,-0.5) {$2'$};
\node at (0.5,0.5) {$1'$};
\node at (1.5,0.5) {$1$};
\node at (2.5,0.5) {$1$};

 \draw[step=1cm] (4,-1) grid (5,0); 
 \draw[step=1cm] (4,0) grid (7,1); 
\node at (4.5,-0.5) {$2'$};
\node at (4.5,0.5) {$1'$};
\node at (5.5,0.5) {$1$};
\node at (6.5,0.5) {$1$}; 

\draw[step=1cm] (14,-1) grid (15,0); 
 \draw[step=1cm] (14,0) grid (17,1); 
\node at (14.5,-0.5) {$2'$};
\node at (14.5,0.5) {$1'$};
\node at (15.5,0.5) {$1$};
\node at (16.5,0.5) {$1$};

\draw (4.5,15)--(1.5,10);
\draw (1.5,8.5)--(0.5,4);
\draw (1.5,8.5)--(2.2,1);

\draw (4.5,15)--(5.5,10);
\draw (5.5,8.5)--(4.5,4);
\draw (5.5,8.5)--(6.2,1);

\draw (4.5,15)--(9.5,10);
\draw (9,8)--(9,4);

\draw (15.5,15.5)--(15.5,10);

\draw (15.5,8.5)--(14.5,4);
\draw (15.5,8.5)--(16.2,1);

\draw (26.5,16)--(22,10);
\draw (26.5,16)--(27,10);
\draw (26.5,16)--(32,10);

\draw (32,9)--(32,4);
\draw (27,9)--(27,4);
\draw (22,9)--(22,4);

\end{tikzpicture}
\end{eqnarray*}

After counting the tableaux appearing in the last two lines of the digram above and computing their weights we see by Theorem \ref{QP} that the degree $4$ part of $\prescript{\times}{}{\mathcal{G}}_{(3,1,2,5,4)}(\mathbf{x},\mathbf{x})$ is equal to $6Q_{(4,0)}(\mathbf{x})+4Q_{(3,1)}(\mathbf{x})$.  In particular, the coefficient of $x_1^4x_2^0x_3^0\cdots$ in this expression is $12$ since this coefficient is $2$ in $Q_{(4,0)}(\mathbf{x})$ and $0$ in $Q_{(3,1)}(\mathbf{x})$.  This implies that there ought to be exactly 12 hook Hecke factorizations of $w$ which are composed of only one factor which has length $4$.  Indeed the following are all such factorizations:
\begin{eqnarray*}
\big(1124\big),\,\,\,\,\,
\big(1224\big),\,\,\,\,\,
\big(1244\big),\,\,\,\,\,
\big(\circled{4}112\big),\,\,\,\,\,
\big(\circled{4}122\big),\,\,\,\,\,
\big(\circled{4}124\big),\,\\
\big(\circled{1}124\big),\,\,
\big(\circled{1}224\big),\,\,
\big(\circled{1}244\big),\,\,
\big(\circled{4}\circled{1}12\big),\,\,
\big(\circled{4}\circled{1}22\big),\,\,
\big(\circled{4}\circled{1}24\big).
\end{eqnarray*}

\end{example}

\section{Conjectures and Open Problems}\label{open}

We end this paper with two open problems and a conjecture that have come up in our treatment of double Grothendieck polynomials.

\begin{open problem} The first open problem is to reformulate Hecke insertion so that it does not suffer from the drawback mentioned in remark \ref{pita} (while still maintaining the properties of Lemma \ref{combined}).  More explicitly, the problem is to define an insertion algorithm that gives a bijection from Hecke words for a permutation $w$ to pairs $(P,Q)$ of tableaux of the same shape where $P$ is a Hecke tableau for $w$ and $Q$ is a standard set-valued tableau such that the algorithm has the following property:  
 Suppose the word $\alpha_1\cdots \alpha_m$ maps to $(P,Q)$. Then 
\begin{itemize}
\item $\alpha_i>\alpha_{i+1}$ if and only if $i+1$ shows up in a row strictly below $i$ in $Q$.
\item $\alpha_i < \alpha_{i+1}$ if and only if $i+1$ shows up in a column to the right of  $i$ in $Q$.
\end{itemize}

\end{open problem}

\begin{open problem}
The second open problem is to generalize our definition of hook Hecke factorization to include signed permutations that have one or more instances of the generator $s_0$ in a reduced word for said signed permutation.  This should be done in such a way that the resulting generating function is symmetric, $Q$-Schur positive, and agrees with the type $B$ Stanley symmetric function on terms of lowest degree. The author has  tried a few of the more obvious ways of doing this, and so far none have been successful.  For instance, the most obvious approach--of simply extending the definition to include $s_0$ quickly fails:  For instance, the signed permutation $s_0s_1s_0$ has 8 hook Hecke factorizations of weight $(3,1)$:
\begin{eqnarray*}
\big(001\big)\big(0\big),
\big(001\big)\big(\circled{0}\big),
\big(\circled{0}01\big)\big(0\big),
\big(\circled{0}01\big)\big(\circled{0}\big),
\big(011\big)\big(0\big),
\big(011\big)\big(\circled{0}\big),
\big(\circled{0}11\big)\big(0\big),
\big(\circled{0}11\big)\big(\circled{0}\big)
\end{eqnarray*}
but only 4 hook Hecke factorizations of weight $(1,3)$:
\begin{eqnarray*}
\big(0\big)\big(100\big),
\big(0\big)\big(\circled{1}00\big),
\big(\circled{0}\big)\big(100\big),
\big(\circled{0}\big)\big(\circled{1}00\big)
\end{eqnarray*}so does not even result in a symmetric polynomial.

\end{open problem}

Finally, we end with the conjecture  we alluded to at the beginning of Section \ref{half}.   (The appendices after the references include code that can be used to check this conjecture.)

\begin{conjecture}\label{conj}
Recall the definitions of set-valued shifted tableaux and $GQ_{\lambda}(\mathbf{x})$ given in \cite{IN13}.  Then if $\gamma \supseteq \nu$ are shifted shapes we have
\begin{eqnarray*}
{{GQ}}_{\gamma/\nu}(\mathbf{x})=\sum_{\mu} b_{\gamma/\nu}^{\mu} \cdot GQ_{\mu}(\mathbf{x}),
\end{eqnarray*}
where  $b_{\gamma/\nu}^{\mu}$ is  the number of set-valued shifted  tableaux of shape $\gamma/\nu$ and weight $\mu$ having the $i$-primed property of  \ref{starting} and  $i$-lattice property of  \ref{lattice}  for all $i$.
\end{conjecture}

\begin{remark}
Note that a particular case of the conjecture implies that
\begin{eqnarray*}
{{G}}_{\lambda}(\mathbf{x},\mathbf{x})={{GQ}}_{(\lambda+\delta)/\delta}(\mathbf{x})=\sum_{\mu} b_{(\lambda+\delta)/\delta}^{\mu} \cdot GQ_{\mu}(\mathbf{x}).
\end{eqnarray*}
Since the equation \ref{first} with $\mathbf{y}=\mathbf{x}$ implies that:
\begin{eqnarray*}
{\mathcal{G}}_{w}(\mathbf{x},\mathbf{x})=\sum_{T \in HT_{w}} G_{\lambda}(\mathbf{x},\mathbf{x}),
\end{eqnarray*}
Conjecture \ref{conj} would show that ${\mathcal{G}}_{w}(\mathbf{x},\mathbf{x})$ is $GQ$-positive and give a combinatorial interpretation of the expansion coefficients.
\end{remark}

\bibliographystyle{alpha}
\bibliography{DoubleGrothendieck}

\newcommand{\etalchar}[1]{$^{#1}$}
\begin{thebibliography}{BFH{\etalchar{+}}20}

\bibitem[BFH{\etalchar{+}}20]{BFHTW}
Ben Brubaker, Claire Frechette, Andrew Hardt, Emily Tibor, and Katherine Weber.
\newblock Frozen pipes: Lattice models for grothendieck polynomials.
\newblock Preprint, 2020.

\bibitem[BH95]{Billey.Haiman.1995}
Sara Billey and Mark Haiman.
\newblock Schubert polynomials for the classical groups.
\newblock {\em J. Amer. Math. Soc.}, 8(2):443--482, 1995.

\bibitem[BKS{\etalchar{+}}08]{BKSTY08}
Anders~Skovsted Buch, Andrew Kresch, Mark Shimozono, Harry Tamvakis, and
  Alexander Yong.
\newblock Stable {G}rothendieck polynomials and {$K$}-theoretic factor
  sequences.
\newblock {\em Math. Ann.}, 340(2):359--382, 2008.

\bibitem[Buc02]{Buch02}
Anders~Skovsted Buch.
\newblock A {L}ittlewood-{R}ichardson rule for the {$K$}-theory of
  {G}rassmannians.
\newblock {\em Acta Math.}, 189(1):37--78, 2002.

\bibitem[FK94]{FK94}
Sergey Fomin and Anatol~N. Kirillov.
\newblock Grothendieck polynomials and the {Y}ang-{B}axter equation.
\newblock In {\em Formal power series and algebraic combinatorics/{S}\'eries
  formelles et combinatoire alg\'ebrique}, pages 183--189. DIMACS, Piscataway,
  NJ, 1994.

\bibitem[Haw24]{Hawkes.2022}
Graham Hawkes.
\newblock Unified framework for tableau models of {G}rothendieck polynomials.
\newblock {\em Contrib. Discrete Math},
  19(3):86-101, 2024.

\bibitem[IN13]{IN13}
Takeshi Ikeda and Hiroshi Naruse.
\newblock {$K$}-theoretic analogues of factorial {S}chur {$P$}- and
  {$Q$}-functions.
\newblock {\em Adv. Math.}, 243:22--66, 2013.

\bibitem[KM04]{KM04}
Allen Knutson and Ezra Miller.
\newblock Subword complexes in coxeter groups.
\newblock {\em Adv. Math.}, 184(1):161--176, 2004.

\bibitem[Las85]{Lascoux.85}
Alain Lascoux.
\newblock Polynômes de {S}chubert: une approche historique.
\newblock {\em Disc. Math.}, 139(1):303--317, 1985.

\bibitem[Las90]{Lascoux90}
Alain Lascoux.
\newblock Anneau de {G}rothendieck de la vari\'{e}t\'{e} de drapeaux.
\newblock In {\em The {G}rothendieck {F}estschrift, {V}ol. {III}}, volume~88 of
  {\em Progr. Math.}, pages 1--34. Birkh\"{a}user Boston, Boston, MA, 1990.

\bibitem[Len00]{Lenart00}
Cristian Lenart.
\newblock Combinatorial aspects of the {$K$}-theory of {G}rassmannians.
\newblock {\em Annals of Combinatorics}, 4(1):67--82, 2000.

\bibitem[LM21]{LM21}
Joel Lewis and Eric Marberg.
\newblock Enriched set-valued $p$-partitions and shifted stable grothendieck
  polynomials.
\newblock {\em Mathematische Zeitschrift}, 299:1929--1972, 2021.

\bibitem[LP07]{LaPy07}
Thomas Lam and Pavlo Pylyavskyy.
\newblock Combinatorial {H}opf algebras and {$K$}-homology of {G}rassmannians.
\newblock {\em International Mathematics Research Notices. IMRN}, 2007(24):Art.
  ID rnm125, 48, 2007.

\bibitem[LS82a]{LS82a}
Alain Lascoux and Marcel-Paul Sch{\"u}tzenberger.
\newblock Polynômes de {S}chubert.
\newblock {\em C. R. Acad. Sci. Paris S\'er. I Math.}, 294(13):447--450, 1982.

\bibitem[LS82b]{LS82}
Alain Lascoux and Marcel-Paul Sch{\"u}tzenberger.
\newblock Structure de {H}opf de l'anneau de cohomologie et de l'anneau de
  {G}rothendieck d'une vari\'et\'e de drapeaux.
\newblock {\em C. R. Acad. Sci. Paris S\'er. I Math.}, 295(11):629--633, 1982.

\bibitem[LS83]{LS83}
Alain Lascoux and Marcel-Paul Sch{\"u}tzenberger.
\newblock Symmetry and flag manifolds.
\newblock In {\em Invariant theory ({M}ontecatini, 1982)}, volume 996 of {\em
  Lecture Notes in Math.}, pages 118--144. Springer, Berlin, 1983.

\bibitem[McN06]{McNamara06}
Peter~J. McNamara.
\newblock Factorial {G}rothendieck polynomials.
\newblock {\em Electron. J. Combin.}, 13(1):Research Paper 71, 40, 2006.

\bibitem[Mè16]{M16}
Karola Mèszáros.
\newblock Pipe dream complexes and triangulations of root polytopes belong
  together.
\newblock {\em SIAM J. Discrete Math}, 30(1):100--111, 2016.

\bibitem[PP16]{PP16}
Rebecca Patrias and Pavlo Pylyavskyy.
\newblock Combinatorics of {$K$}-theory via a {$K$}-theoretic
  {P}oirier-{R}eutenauer bialgebra.
\newblock {\em Discrete Math.}, 339(3):1095--1115, 2016.

\bibitem[Ste89]{Stembridge89}
John~R Stembridge.
\newblock Shifted tableaux and the projective representations of symmetric
  groups.
\newblock {\em Advances in Mathematics}, 74(1):87--134, 1989.

\end{thebibliography}

\appendix
\section{Expanding $GQ_{\gamma/\nu}$ in terms of $GQ_{\mu}$}

In this appendix we give the python code needed to determine the $GQ$ expansion of $GQ_{\gamma/\nu}$ according to Conjecture \ref{conj}.  The input syntax is $\mathtt{GQ\_expand(gamma,nu)}$ where $\mathtt{gamma}$ and $\mathtt{nu}$ are lists of integers.  The output is a list of pairs.  Each pair in the list consists of, first, a partition $\mathtt{mu}$, and, second, its multiplicity in the expansion of $GQ_{\gamma/\nu}$.
\lstset{language=Python}
\lstset{frame=lines}
\lstset{label={lst:code_direct}}
\lstset{style=mystyle}

\begin{lstlisting}






#//////////////////////////Create Sequence Function for Later//////////////////////////////
    
    

def sequences(length,maxi):
    #Create list of all sequences of fixed length using numbers {0,...,maxi}
    seq_list=[[]]
    while len(seq_list[0])<length:
        new_list=[]
        for seq in seq_list:
            for j in range(0,maxi+1):
                new_list.append(seq+[j])
        seq_list=new_list
    return(seq_list)
    
    
#/////////////////////////////One-Row Tableaux/////////////////////////////////////    
    
    
def row(m,n):
    #Function to create list of all ("shifted") one-row set-valued tableaux of length m and max entry n.
    #We use the representation 1'-->1, 1-->2, 2'-->3, 2-->4, etc.
    row_tabs=[[]]
    while len(row_tabs[0])<m:
        #(Continue until the first tab (and all other tabs) in row_tabs has the desired length.)
        #Initialize new list whose tabs will have one more box than those in row_tabs.
        new_tabs=[]        
        for rowtab in row_tabs:
            if len(rowtab)>0:
                rightmost=rowtab[-1][-1]
            else:
                rightmost=1
            #Create a list of 0-1 sequences representing all possible subsets of {rightmost,rightmost+1,...,2n}
            #0 represents in, 1 represents out.
            subset_reps=sequences(2*n+1-rightmost,1)
            for rep in subset_reps:
                #Create the subset represented by the rep
                subset=[]
                for k in range(0,len(rep)):
                    if rep[k]==0:
                        subset.append(k+rightmost)
                #Create a newtab by appending this subset to the end of rowtab:
                newtab=rowtab+[subset]
                #Check if newtab is valid
                #The new box (subset) must be nonempty and the rightmost element of previous box must either:
                #be unprimed (i.e., represented by an even integer) or
                #be primed (i.e., represented by an odd integer) and strictly less than than first element of new box or
                #not actually exist (i.e., the new box will be the first box)
                if len(subset)>0 and (rightmost%2==0 or rightmost<subset[0] or len(rowtab)==0):
                    #add subset in next box of rowtab
                    new_tabs.append(newtab)
        #Replace rowtabs with newtabs
        row_tabs=new_tabs
    return(row_tabs)    
    
        
#/////////////////////////////Check Column Requirement/////////////////////////////////////


def over (top_row,top_skew,bottom_row,bottom_skew):
    #Function to determine the validity of the two row tableau [top_row,bottom_row] where:
    #top_row is skewed to the right by top_skew units
    #bottom_row is skewed to the right by bottom_skew units 
    #bottom_row is additionally shifted one position to the right relative to top_row
    #note that top_skew>bottom_skew unless both are 0
    
    #If the bottom row extends past the top row in the rightward direction the tableau is invalid.
    if top_skew+len(top_row)<1+bottom_skew+len(bottom_row):
        return false
    
    #Check the column condition for each box in top_row.
    for i in range(0,len(top_row)):
        top_box=top_row[i]
        #Determine if there is a box below top_box and if so what it contains
        j=(top_skew+i)-(bottom_skew+1)
        if 0<=j<len(bottom_row):
            #Then the box below exists
            bottom_box=bottom_row[j]
            if max(top_box)>min(bottom_box):
                #Larger number over smaller number --> Column condition broken
                return False
            if max(top_box)==min(bottom_box) and max(top_box)%2==0:
                #An unprimed number over the same unprimed number --> Column condition broken
                return False
    return True   
   


#/////////////////////////////Create Candidates for Stembridge Tableaux/////////////////////////////////////

def row_flagged(gamma,nu):
    #We start by noting that any tableau with the i-lattice property for all i cannot have a j>i in row i.
    #So first return all shifted set-valued tableau of shape lam/mu such that entries in row i are <=i.
    
    #Step 1: Ensure the partitions have same length
    while len(nu)<len(gamma):
        nu.append(0)
        
    flag_tabs=[]
    #Step 2: Initialize flag_tabs to the set of one-row tableau in the alphabet {1',1}
    base_rows=row(gamma[0]-nu[0],1)
    for baserow in base_rows:
        flag_tabs.append([baserow])
        #base_rows is a list of lists (rows): flag_tabs is a list of LISTS (tableaux--with one row right now) of lists
        
    #Step 3: Redefine flag_tabs until it is a list of tableaux each of which has |gamma| rows.   
    for j in range(1,len(gamma)):
        new_tabs=[]
        new_rows=row(gamma[j]-nu[j],j+1)
        for tab in flag_tabs:
            last_row=tab[-1]
            for newrow in new_rows:
                #Check if adding this newrow is valid
                if over(last_row,nu[j-1],newrow,nu[j])==True:
                    new_tabs.append(tab+[newrow])
        flag_tabs=new_tabs
    return(flag_tabs)



#////////////////////////////////////Read the word of a tableau//////////////////////////////
            
            
def read(P):
    #Return reading word: bottom row to top row; left to right within rows;  preserving whatever intra box order
    r=[]
    for i in range(len(P)-1,-1,-1):
        for j in range(0,len(P[i])):
            for k in range(0,len(P[i][j])):
                r.append(P[i][j][k])
    return(r)





#/////////////////////////////////Sort as in Definition 5.15 and 5.16////////////////////////
    
def sort_boxes(P):
    #Function to sort the elements within a box in accordance with Definition 5.15 and 5.16
    #Returns new tableau with the elements in the boxes sorted in the order 1,2,3,...,3',2',1'
      
    Q=[]
    for row in P:
        Q.append([])
        for box in row:
            newbox=copy.copy(box)
            newbox.sort(key=lambda x: -1/(x*math.pow(-1,x%2)))
            #Recall boxes actually contain letters from {1,2,3,4,...} corresponding to {1',1,2',2,...} 
            Q[-1].append(newbox)
    return(Q)


#////////////////////////////////////Check Definition 5.15//////////////////////////////


def primed_prop(P):
    Q=sort_boxes(P)
    #Function to check Definition 5.15 (the i-primed property)
    word=read(Q)
    #Recall word is composed of letters from {1,2,3,4,...} corresponding to {1',1,2',2,...}
    maxi=max(word)

    starts=[0]*(math.ceil(maxi/2)+1)
    #(This vector counts whether an i or i' has appeared (starts[0] always is 0 by convention))
        
    #We simply need to read through the word left to right due to how it was constructed.
    for letter in word:
        #Recall letter is the representative from {1,2,3,4,...} corresponding to {1',1,2',2,...}
        base=math.ceil(letter/2)
        #Now, we check whether this is the first i or i' appearing and if so record what type it is
        if starts[base]==0:
            #0 indicates no i or i' has appeared yet
            if letter%2==0:
                starts[base]=1
                #record that the first i or i' is an i with a 1
            if letter%2==1:
                starts[base]=-1
                #record that the first i or i' is an i' with a -1
    ok=True
    for k in range(0,len(starts)):
        if starts[k]==1:
            ok=False
    return(ok)

#////////////////////////////////////Check Definition 5.16//////////////////////////////

def lattice_prop(P):
    #Does the tableau P have the lattice property?
    Q=sort_boxes(P)
    word=read(Q)
    maxi=max(word)
    counts=[0]*(math.ceil(maxi/2)+2)
    #This keeps track of the number of times an i appears when reading backwards and an i' appears when reading forward.
    #By convention counts[0]=0 and counts[-1]=0 always.

    #read backwards
    for j in range(len(word)-1,-1,-1):
        letter=word[j]
        base=math.ceil(letter/2)
        #while reading backwards each time you read an i add 1 to counts[i] 
        if letter%2==0:
            counts[base]+=1
            #if counts[i] becomes greater than counts[i-1] return false
            if base>1 and counts[base]>counts[base-1]:
                return False
        #each time you read an i' while reading backward:
        if letter%2==1:
            #counts does not change
            #if you read an i' while counts[i] equals counts[i-1] return false
            if base>1 and counts[base]==counts[base-1]:
                return False  
    #read forwards starting with the currect value of counts
    for j in range(0,len(word)):
        letter=word[j]
        base=math.ceil(letter/2)
        #while reading forward add 1 to counts[i] each time you read an i'
        if letter%2==1:
            counts[base]+=1
            #if counts[i] becomes greater than counts[i-1] return false
            if base>1 and counts[base]>counts[base-1]:
                return False
        #each time you read an i while reading forward:
        if letter%2==0:
            #counts does not change
            #if you read an i while counts[i+1] equals counts[i] return false
            if base>0 and counts[base+1]==counts[base]:
                return False          
    return(True)


#////////////////////////////////////Find the Stembridge tableaux//////////////////////////////



def stembridge(gamma,nu):
    S=[]
    Candidates=row_flagged(gamma,nu)
    for P in Candidates:
        if primed_prop(P)==True and lattice_prop(P)==True:
            S.append(P)
    return(S)
                


#/////////////////////////////////////Compute the Weight//////////////////////////////
 
def weight(word):
    #Recall word is composed of letters from {1,2,3,4,...} corresponding to {1',1,2',2,...}
    maxi=max(word)
    counts=[0]*(math.ceil(maxi/2))
    #This vector will count the number of times an i or i' appears.
    for letter in word:
        base=math.ceil(letter/2)
        counts[base-1]+=1
        #base-1 because our tableaux do not have 0s
    return(counts)

                        
#/////////////////////////////////////Find the GQ expansion//////////////////////////////
   
def GQ_expand(gamma,nu):
    #Write GQ_{gamma/nu} as a sum of GQ_{mu} as in Conjecture 5.1
    
    #Find all the gamma/nu Stembridge tableaux
    S=stembridge(gamma,nu)
    #Create a list of their weights
    W=[]
    for tab in S:
        W.append(weight(read(tab)))     
    #Sorts the weights
    W.sort(key=lambda x: str(x)[1:-1])
    W.reverse()
    #Record the weights along with the multiplicity with which they appear
    U=[[W[0],1]]
    for i in range(1,len(W)):
        if W[i]==W[i-1]:
            U[-1][1]+=1
        else:
            U.append([W[i],1])
    #Return a list of pairs: First element is a partition, second element is its GQ multiplicity in GQ_{gamma/nu}. 
    return(U)


GQ_expand([5,4,2],[3,1])




\end{lstlisting}

\section{Checking Conjecture \ref{conj} for fixed $\gamma/\nu$}

Recall $GQ_{\gamma/\nu}(\mathbf{x})$ is the generating function over set-valued shifted  tableaux as they are defined in \cite{IN13}. Now let $GR_{\gamma/\nu}(\mathbf{x})$ be the generating function over the  set-valued shifted  tableaux of \cite{IN13} which happen to have the $i$-primed property  of definition \ref{starting} for all $i$. Note that each set-valued shifted tableau, $T$, with the $i$-primed property for all $i$ can be seen as giving rise to $3^{a}$ set-valued shifted tableaux, where $a$ is the number of values of $i$ for which at least one $i'$ or $i$ shows up in $T$.    These tableaux arise from $T$ by, for each such $i$, replacing the first $i'$ in the tableau with one of  $i'$, $i$, or $(i',i)$. Keeping this in mind it is not hard to see that if we know $GR_{\gamma/\nu}(\mathbf{x})$ we can compute $GQ_{\gamma/\nu}(\mathbf{x})$ (each degree $d$ monomial in the former will give rise to $3^{a}$ monomials in the latter ranging in degree from $d$ to $d+a$ where $a$ is the number of variables in the original monomial with non-zero exponent).  In fact, it is not difficult to show that Conjecture \ref{conj} follows from showing that
\begin{eqnarray*}
{{GR}}_{\gamma/\nu}(\mathbf{x})=\sum_{\mu} b_{\gamma/\nu}^{\mu} \cdot GR_{\mu}(\mathbf{x}).
\end{eqnarray*}
Finally,  the fact that $GQ_{\gamma/\nu}(\mathbf{x})$ is symmetric implies that $GR_{\gamma/\nu}(\mathbf{x})$ is symmetric so  it suffices to check that the same dominant monomials appear (with the same mulitplicities) on the left and right side of the equation above to establish Conjecture \ref{conj}. This check is performed by the code below for fixed parameters. The input syntax is $\mathtt{Test\_Conjecture(gamma,nu,num\_vars,up\_to\_degree}$.  This performs the check for  shape $\gamma/\nu$ for all monomials in the finite set of variables $(x_1,\ldots,x_{num\_vars})$ that have degree at most $\mathtt{up\_to\_degree}$.    The code in the previous appendix is needed to run this code.

\lstset{language=Python}
\lstset{frame=lines}
\lstset{label={lst:code_direct}}
\lstset{style=mystyle}

\begin{lstlisting}
import copy
import math

#///////////Function to determine all standard shifted tableaux of shape gamma/nu in the letters {0,...,alph-1}

def standard_tabs(gamma,nu,alph):  
    #make nu and gamma have same length
    while len(nu)<len(gamma):
        nu.append(0)
    
    #Create an empty tableau with len(gamma) rows. 
    #Add ["x"] in each location of a box that is skewed or shifted out.
    empty_tab=[]
    for i in  range(0,len(gamma)):
        offset=nu[i]+i+1
        empty_row=[]
        for j in range(0,offset):
            empty_row.append(["x"])
        empty_tab.append(empty_row)
 
    #Initialize a list with a pair composed of this tableau and an empty positions set
    positions=[]
    tab_list=[    [     empty_tab,    positions   ]     ]
    #positions is the list of the (x,y)-coordinates of the entries of the tableau
    #positions will be used to determine the peak_set and repeat_set of the tableau
    
    #Create all tableaux by successively adding each n from 0 to alph-1 to a new box or a terminal box
    for n in range(1,alph+1):
        new_list=[]
        for old in tab_list:
            old_tab=old[0]
            positions=old[1]            
            for i in range(0,len(old_tab)):
                #try to add n to last box of row i 
                if old_tab[i][-1]!=["x"]:
                    if i==len(old_tab)-1 or old_tab[i+1][-1]==["x"] or len(old_tab[i])>len(old_tab[i+1]):
                        new_tab=copy.deepcopy(old_tab) 
                        new_tab[i][-1]+=[n]
                        new_position=[i,len(new_tab[i])]
                        new_list.append([new_tab,positions+[new_position]])
                #also try to add new box to row i with [n]
                if len(old_tab[i])<gamma[i]+i+1:
                    if i==0 or len(old_tab[i])<len(old_tab[i-1]):
                        new_tab=copy.deepcopy(old_tab) 
                        new_tab[i]+=[[n]]
                        new_position=[i,len(new_tab[i])]
                        new_list.append([new_tab,positions+[new_position]])
                    

        tab_list=new_list
    
    
    #Create the final list of tableaux and record their peak and repeat sets.
    P=[]
    for tab in tab_list:
        #Check that each box of gamma/nu has been filled with at least one entry. 
        boxes_full=True
        for h in range(len(gamma)):
            if len(tab[0][h])!=gamma[h]+h+1:
                boxes_full=False

        if boxes_full==True:
            #compute the peak_set and repeat_set for each tableau
            #the peak_set is the set of j such that j-1 lies strictly left of j and j+1 lies strictly below j
            #the repeat_set is the set of i such that i+1 lies in the same box as i.
            peak_set=[]
            repeat_set=[]
            positions=tab[1]
            for i in range(0,len(positions)-2):
                j=i+1
                k=i+2
                if positions[j][1]>positions[i][1] and positions[k][0]>positions[j][0]:
                    peak_set.append(j)
            for i in range(0,len(positions)-1):
                if positions[i]==positions[i+1]:
                    repeat_set.append(i)
            P.append([tab[0],peak_set,repeat_set])
                    
    return(P)



#////////////Function to return list of all partitions of n into at most k parts//////////
def partitions(n,k):
    par_list=[[1]]
    while sum(par_list[0])<n:
        new_list=[]
        #Any partition of size m+1 and at most k rows can be uniquely created by
        #Starting with a partition of size m, say old_par
        for old_par in par_list:
            #And then either:
            #Trying to add a box to the last row of old_par
            if len(old_par)==1 or (len(old_par)>1 and old_par[-1]<old_par[-2]):
                new_par=copy.copy(old_par)
                new_par[-1]+=1
                new_list.append(new_par)
            #Or trying to add a new row with one box to old_par
            if len(old_par)<k:
                new_par=copy.copy(old_par)
                new_par.append(1)
                new_list.append(new_par)
        par_list=new_list
    return(par_list)


#///////Returns all (dominant) weights that arise from a valid semi-standardization of a given standard_tab
#(Set-valued shifted tableaux (with i-primed prop) are in bijection with pairs (standard_tab, valid weight))
#See the definiction of the function for a precise understanding of 'valid'
def polynomial(standard_tab,num_vars):
    #We are given a standard tab along with peak and repeat sets:
    tab=standard_tab[0]
    peak_set=standard_tab[1]
    repeat_set=standard_tab[2]
    #Compute number of entries in tab so we know what size of partitions to check.
    num_entries=0
    for row in tab:
        for box in row:
            if box!=["x"]:
                num_entries+=len(box)
    #Find all partitions of num_entries into at most num_vars parts 
    pars=partitions(num_entries,num_vars)
    #For each partition we check if there is a semi-standarization of tab with this weight.
    Weights=[]
    for par in pars:
        #Express partition as a  weakly increasing sequence with par[j] entries equal to j+1 
        weak_seq=[]
        for j in range(0,len(par)):
            weak_seq+=[j+1]*par[j]

        #Determine whether this sequence is a valid semi-standardization of tab
        good=True
        #Check that whenever weak_seq[k]=weak_seq[k+1], k and k+1 are in different boxes
        for k in range(0,len(weak_seq)-1):
            if weak_seq[k]==weak_seq[k+1] and k in repeat_set:
                good=False
        #Check that whenever weak_seq[j]=weak_seq[j+1]=weak_seq[j+2], j+1 is not a peak
        for j in range(0,len(weak_seq)-2):
            if weak_seq[j]==weak_seq[j+1] and weak_seq[j+1]==weak_seq[j+2] and j+1 in peak_set:
                good=False
        #This ensures that the set of consecutive entries of tab corresponding to a fixed value 
        #under semi-standardization form a 'vee' in the tableau
        if good==True:
            Weights.append(par)
    return(Weights)



    
#//////////Find the monomial expansion of GR_{gamma/mu} in num_vars variables of degree at most up_to_degree
def monomial_exp(gamma,nu,num_vars,up_to_degree):
    #Compute the (dominant) monomials appearing in GR_{gamma/nu}.
    P=[]
    alph=sum(gamma)-sum(nu)
    while alph<=up_to_degree:
        print("-computing terms of degree "+str(alph)+".") 
        #Q will hold the monomials of degree alph.
        Q=[]
        #Find the standard tabs of shape gamma/mu using alph numbers
        tabs=standard_tabs(gamma,nu,alph)
        for tab in tabs:
            #Find all the (semi-standard) set-valued shifted tableau (with i-primed prop) and dominant weight
            #that are asociated to tab. And add the terms corresponding to their weights to Q
            Q+=polynomial(tab,num_vars)
        if Q!=[]:
            #Add Q to P
            P+=Q
        else:
            alph=up_to_degree
        alph+=1
    return(P)


#Compare GR_{gamma/nu} calculated directly with GR_{gamma/mu} expanded into GR_{mu}'s and then into monomials

def Test_Conjecture(gamma,nu,num_vars,up_to_degree):
    print("Computing the value of GR_{"+str(gamma)+"/"+str(nu)+"} directly")
    direct=monomial_exp(gamma,nu,num_vars,up_to_degree)
    direct.sort(key=lambda x: str(x))
    print("There are "+str(len(direct))+" dominant monomials")
    
    
    conjectured=[]
    print("Computing the conjectured expansion.")
    Expansion=GQ_expand(gamma,nu)
    print("There are "+str(len(Expansion))+" distinct shapes in the expansion.")
    for pair in Expansion:
        print("Computing the value of GR_{"+str(pair[0])+"}")
        shape=pair[0]
        shape_mult=pair[1]
        monomials=monomial_exp(shape,[],num_vars,up_to_degree)
        #Add the monomials from this shape to the list as many times as the shape shows up.
        for i in range(0,shape_mult):
            conjectured+=monomials
    conjectured.sort(key=lambda x: str(x))



    return(direct==conjectured)


        
Test_Conjecture([5,4,2],[3,1],3,9)   
\end{lstlisting}

\end{document}